\newtheorem{theorem}{Theorem}[section]
\newtheorem{remark}{Remark}[section]
\newtheorem{lemma}[theorem]{Lemma}
\newtheorem{proposition}[theorem]{Proposition}
\def\on{\bar\rho}
\def\O{{\Omega }}
\def\norm[#1]#2{\|#2\|_{#1}}
\def\la{\label}
\def\na{\nabla}
\def\on{\bar\n}
\def\QEDopen{{\setlength{\fboxsep}{0pt}\setlength{\fboxrule}{0.2pt}\fbox{\rule[0pt]{0pt}{1.3ex}\rule[0pt]{1.3ex}{0pt}}}} 
\def\QED{\QEDopen} 
\def\endproof{\hspace*{\fill}~\QED\par\endtrivlist\unskip}
\newcommand{\n}{\rho}
\newcommand{\lm}{\lambda}
\newcommand{\ltwo}{_{L^2}^2}
\newcommand{\pa}{\partial}
\renewcommand{\r}{\mathbb{R}}
\newcommand{\ia}{\int_0^T}
\newcommand{\ga}{\gamma}
\renewcommand{\div}{{\rm div} }
\newcommand{\curl}{{\rm curl} }
\newcommand{\de}{\delta}
\newcommand{\ve}{\varepsilon}
\newcommand{\si}{\sigma}
\newcommand{\ol}{\overline}
\newcommand{\bt}{\begin{theorem}}
\newcommand{\bl}{\begin{lemma}}
\newcommand{\el}{\end{lemma}}
\newcommand{\et}{\end{theorem}}
\newcommand{\bn}{\begin{eqnarray}}
\newcommand{\en}{\end{eqnarray}}
\newcommand{\bnn}{\begin{eqnarray*}}
\newcommand{\enn}{\end{eqnarray*}}
\newcommand{\bnnn}{\begin{eqnarray*}}
\newcommand{\ennn}{\end{eqnarray*}}
\newcommand{\ba}{\begin{aligned}}
\newcommand{\ea}{\end{aligned}}
\newcommand{\be}{\begin{equation}}
\newcommand{\ee}{\end{equation}}
\renewcommand{\thefootnote}{}
\newcommand\blfootnote[1]{%
  \begingroup
  \renewcommand\thefootnote{}\footnote{#1}%
  \addtocounter{footnote}{-1}%
  \endgroup
}
\title{Global Strong Solutions to the Compressible Magnetohydrodynamic Equations with Slip Boundary Conditions in a 3D Exterior Domain}
\author{Yazhou C{\small HEN}, Bin H{\small UANG}, Xiaoding S{\small HI}   \\[3mm] {\normalsize   College of Mathematics and Physics, }\\ {\normalsize  Beijing University of Chemical Technology, Beijing 100029, P. R. China} }
\date{ }
\begin{document}
\maketitle
\blfootnote{Email: chenyz@mail.buct.edu.cn (Y.Chen), abinhuang@gmail.com (B.Huang), shixd@mail.buct.edu.cn (X.Shi)}

\begin{abstract}
In this paper we study the initial-boundary-value problem for the barotropic compressible magnetohydrodynamic system with slip boundary conditions in three-dimensional exterior domain. We establish the global existence and uniqueness of classical solutions to the exterior domain problem with the regular initial data that are of small energy but possibly large oscillations with constant state as far field which could be either vacuum or nonvacuum. In particular, the initial density of such a classical solution is allowed to have large oscillations contain vacuum states. Moreover, the large-time behavior of the solution is also shown.
\end{abstract}

\textbf{Keywords:} compressible magnetohydrodynamic equations;  global existence; exterior domain; slip boundary condition; vacuum.


\section{Introduction}
We consider the viscous barotropic compressible magnetohydrodynamic (MHD) equations for isentropic flows in a domain $\Omega\subset\r^{3}$, which can be written as
\begin{equation}\label{CMHD}
\begin{cases}
\rho_t+ \mathop{\mathrm{div}}\nolimits(\rho u)=0,\\
(\rho u)_t+\mathop{\mathrm{div}}\nolimits(\rho u\otimes u)+\nabla P
=\mu \Delta u+(\mu+\lambda)\nabla \mathop{\mathrm{div}}\nolimits u +(\nabla\times H)\times H,\\
H_t -\nabla \times (u \times H)=-\nu \nabla \times (\nabla \times H),
\\
\mathop{\mathrm{div}}\nolimits H=0,
\end{cases}
\end{equation}
where $(x,t)\in\Omega\times (0,T]$, $t\geq 0$ is time, and $x=(x_1,x_2,x_3)$ is the spatial coordinate. The unknown functions $\rho, u=(u^1,u^2,u^3), P=P(\rho),$ and $H=(H^1,H^2,H^3)$ denote the fluid density, velocity, pressure and magnetic field, respectively. Here we consider the barotropic flows with $\gamma$-law pressure $P(\rho)=a\rho^{\gamma}$ ($a>0$ and $\gamma >1$).
The physical constants $\mu$,  $\lambda$ and $\nu$  are shear viscosity, bulk coefficients and resistivity coefficient respectively satisfying
$\mu>0$, $2\mu +{3}\lambda\geq 0$ and $\nu >0$.

In this paper, we are concerned with the global existence of classical solutions of \eqref{CMHD} in the exterior domain of bounded region with slip boundary condition in $\r^3$, which can be regarded as a continuation of our work in \cite{chs2020}. Throughout this paper, let $\mathbb{D}$ be a simply connected bounded domain in $\r^{3}$ with smooth boundary and contained in the ball $B_R \triangleq \{x\in\r^3 |\,|x|<R \}$ for the fixed $R>0$. 
Let $\Omega\subset\r^{3}$ be the exterior domain to $\mathbb{D}$, i.e., $\Omega=\r^3\setminus \bar{\mathbb{D}}$, that is an unbounded domain with smooth boundary $\partial \Omega$. 
In addition, this paper concerns the problem \eqref{CMHD} with the initial data
\begin{equation}\label{initial}
\displaystyle  (\rho,\rho u, H)\big|_{t=0}=(\rho_0, \rho_0 u_0,H_0),\quad \text{in}\,\,\, \Omega,
\end{equation}
and the far-field behavior
\begin{equation}\label{far-b}
\displaystyle (\rho, u, H)\rightarrow ({\rho}_{\infty}, 0, 0), \quad\quad\text{as}\,\, |x|\rightarrow +\infty,
\end{equation}
where ${\rho}_{\infty}\geq 0$ is a given constant.
The boundary condition is supposed by
\begin{align}
& u\cdot n=0,\,\,\,\curl u\times n=0, &\text{on} \,\,\,\partial\Omega, \label{navier-b}\\
& H \cdot n=0,\,\,\,\curl H\times n=0,  &\text{on} \,\,\,\partial\Omega,\label{boundary}
\end{align}
where $n=(n^1,n^2,n^3)$ is the unit outward normal vector to $\partial \Omega$.
The boundary condition \eqref{navier-b} for the velocity presented in this paper can be regarded as a Navier-type slip boundary condition (see e.g., \cite{cl2019}). For the magnetic field, the boundary condition \eqref{boundary} describes that the boundary $\partial \Omega$ is a perfect conductor (see e.g., \cite{djj2013}).

The compressible MHD system \eqref{CMHD} is known to be one of the mathematical models describing the motion of electrically conducting media (cf. gases) in an electromagnetic field and there have been huge literatures devoted to the analysis of the well-posedness and dynamic behavior to the solutions of the system, see, for example, \cite{cw2002,cw2003,djj2013,df2006,fy2008,fy2009,hhpz2017,hw2008,hw2008-1,hw2010,k1984,sh2012,lxz2013,liu2015,lh2015,lsx2016,tg2016,vh1972,wang2003,xh2017,zjx2009,lz2020-mhd,llz2021} and their references. 
Now, we briefly recall some results concerned with well-posedness of solutions for multi-dimensional compressible MHD equations which are more relatively with our problem.
Vol'pert-Hudjaev \cite{vh1972} and Fan-Yu \cite{fy2009} obtained the local existence of  classical solutions to the 3D compressible MHD equations with the initial density is strictly positive or could contain vacuum, respectively. 
Lv-Huang \cite{lh2015} obtained the local existence of classical solutions in $\r^2$ with vacuum as far field density. Tang-Gao \cite{tg2016} obtained the local strong solutions to the compressible MHD equations in a 3D bounded domain with the Navier-slip condition. 
For global existence, Kawashima \cite{k1984} first established the global smooth solutions to the general electro-magneto-fluid equations in two dimensions with non-vacuum.
Hu-Wang \cite{hw2010} proved the global existence of renormalized solutions for general large initial data, also see \cite{hw2008,fy2008} for the non-isentropic compressible MHD equations. 
Recently, Li et al.\cite{lxz2013} established the global existence and uniqueness of classical solutions with constant state as far field in $\r^3$ with large oscillations and vacuum. Hong et al.\cite{hhpz2017} generalized the result for large initial data when $\gamma-1$ and $\nu^{-1}$ are suitably small. Lv et al.\cite{lsx2016} got the global existence of unqiue classical solutions in 2D case.
Recently, we obtained the global classical solutions with vacuum and small energy but possibly large oscillations in a 3D bounded domain with slip boundary condition in \cite{chs2020}. 
Very recently, Liu et al.\cite{llz2021} established the global existence of smooth solutions and the explicit decay rate near a given constant state for 3-D compressible  full MHD with the boundary conditions of Navier-slip for the velocity filed and perfect conduction for the magnetic field in exterior domains.
However, there are no works about the global existence of the strong (classical) solution to the initial-boundary-value problem \eqref{CMHD}-\eqref{boundary} in a 3D exterior domain with initial density containing vacuum, at least to the best of our knowledge. 

The main purpose of this paper is to establish the global well-posedness of classical solutions of compressible MHD system \eqref{CMHD}-\eqref{boundary} in a 3D exterior domain $\Omega$. Since $\Omega$ is no longer bounded, it is distinguishable from our former work \cite{chs2020}. 
Fortunately, we have the Gagliardo-Nirenberg type inequality in the exterior domain (see Lemma \ref{lem-gn}). Moreover, thanks to \cite{NS2004}, the elliptic regularity for the Neumann problem in exterior domain leads us to derive the estimates for the gradient of the effective viscous flux (see Lemma \ref{lem-f-td}), which plays an important role in our analysis. Besides, we also apply the $L^p$-theory for the div-curl system for exterior domains to control $\nabla u$ by means of $\div u$ and $\curl u$ (Lemma \ref{lem-vn}-\ref{lem-high}). In addition, the difficulties caused by the slip boundary still exist and cannot be deal with by the methods in \cite{chs2020} directly. To deal with this difficulty, we adapt the idea of \cite{cl2021} to obtain the boundary estimates (see Lemma \ref{lem-be}), which are used frequently to control the boundary terms in this paper. Furthermore, in order to estimate the derivatives of the solutions, we recall the similar Beale-Kato-Majda-type inequality in the exterior domain (see Lemma \ref{lem-bkm}) to prove the important estimates on the gradients of the density and velocity.
Based on the above analysis, we will establish the well-posedness of classical solutions to the initial-boundary-value problem of compressible MHD system in a 3D exterior domain with lager oscillations and vacuum.

Before formulating our main result, we first explain the notation and conventions used throughout the paper.
For integer $k\geq 1$ and $1\leq q<+\infty$, We denote the standard Sobolev space by $D^{k,q}(\Omega), W^{k,q}(\Omega)$ and $D^k(\Omega)\triangleq D^{k,2}(\Omega), H^k(\Omega)\triangleq W^{k,2}(\Omega)$.
For simplicity, we denote $L^q(\Omega)$, $W^{k,q}(\Omega)$, $H^k(\Omega)$ and $D^k(\Omega)$ by $L^q$, $W^{k,q}$, $H^k$ and $D^k$ respectively, and set
$$\int fdx \triangleq \int_\Omega fdx,\quad \int_0^T\int fdxdt\triangleq\int_0^T\int_\Omega fdxdt. $$
For two $3\times 3$  matrices $A=\{a_{ij}\},\,\,B=\{b_{ij}\}$, the symbol $A\colon  B$ represents the trace of $AB^*$, where $B^*$ is the transpose of $B$, that is,
$$ A\colon  B\triangleq \text{tr} (AB^*)=\sum\limits_{i,j=1}^{3}a_{ij}b_{ij}.$$
Finally, for $v=(v^1,v^2,v^3)$, we denote $\nabla_iv\triangleq(\partial_iv^1,\partial_iv^2,\partial_iv^3)$ for $i=1,2,3,$ and the
material derivative of $v$ by  $\dot v\triangleq v_t+u\cdot\nabla v$.

The initial total energy of \eqref{CMHD} is defined as
\begin{align}\label{c0}
\displaystyle  C_0 =\int_{\Omega}\left(\frac{1}{2}\rho_0|u_0|^2 + G(\rho_0)+\frac{1}{2}|H_0|^2 \right)dx.
\end{align}
where
\begin{align*}
\displaystyle  G(\rho)\triangleq\rho\int_{{\rho}_{\infty}}^{\rho}\frac{P(s)-{P}_{\infty}}{s^{2}} ds,\quad  {P}_{\infty}\triangleq P({\rho}_{\infty}).
\end{align*}

Now we can state our main result, Theorem \ref{th1}, concerning existence of global classical solutions to the problem  \eqref{CMHD}-\eqref{boundary}.
\begin{theorem}\label{th1}
Let $\Omega$ be the exterior domain of a simply connected bounded region $\mathbb{D}$ in $\r^3$ with smooth boundary $\partial\Omega$. For $q\in (3,6)$ and some given constants $M_1,M_2>0$, and $\bar{\rho}\geq{\rho}_{\infty}+1$ , and the initial data $(\rho_0,u_0,H_0)$ satisfy the boundary conditions \eqref{navier-b}-\eqref{boundary} and
\begin{gather}
0\leq\rho_0\leq\bar{\rho},\quad
(\rho_0-{\rho}_{\infty},P(\rho_0)-{P}_{\infty})\in H^2\cap W^{2,q}, \label{dt1}\\
 (u_0, H_0)\in  D^1 \cap D^2 ,\quad \rho_0|u_0|^2+G(\rho_0)+|H_0|^2\in L^1,\quad \mathop{\mathrm{div}}\nolimits H_0=0,\label{dt2}\\
 \|\nabla u_0\|_{L^2}\leq M_1,\quad \|\nabla H_0\|_{L^2}\leq M_2,\label{dt-s}
\end{gather}
and the compatibility condition
\begin{align}\label{dt3}
\displaystyle  -\mu\triangle u_0-(\mu+\lambda)\nabla \mathop{\mathrm{div}}\nolimits u_0 + \nabla P(\rho_0)- (\nabla \times H_0) \times H_0 = \rho_0^{\frac{1}{2}}g,
\end{align}
for some  $ g\in L^2.$ Moreover, $\rho\in L^{3/2}$ when ${\rho}_{\infty}=0$.
Then there exists a positive constant $\ve$ depending only on  $\mu$, $\lambda$, $\nu$, $\ga$, $a$, ${\rho}_\infty$, $\bar{\rho}$,   $M_1$ and $M_2$  such that for the initial energy $C_0$ as in \eqref{c0} if
\begin{equation*}
\displaystyle C_0\leq \ve,
\end{equation*}
then the system \eqref{CMHD}-\eqref{boundary} has a unique global classical solution $(\rho,u,H)$ in $\Omega\times(0,\infty)$ satisfying
\begin{align}\label{esti-rho}
\displaystyle  0\le \rho(x,t)\le 2\bar{\rho},\quad  (x,t)\in \Omega\times(0,\infty),
\end{align}
\begin{equation}\label{esti-uh}
\begin{cases}
(\rho-{\rho}_{\infty},P-{P}_{\infty})\in C([0,\infty);H^2 \cap W^{2,q} ),\\
\nabla u\in C([0,\infty);H^1 )\cap  L^\infty_{\rm loc}(0,\infty;H^2\cap W^{2,q}),\\
u_t\in L^{\infty}_{\rm loc}(0,\infty; D^1\cap D^2)\cap H^1_{\rm loc}(0,\infty; D^1),\\
H \in C([0,\infty);H^2)\cap  L^\infty_{\rm loc}(0,\infty; H^4),\\
H_t\in C([0,\infty);L^2)\cap H^1_{\rm loc}(0,\infty; H^1)\cap L^\infty_{\rm loc}(0,\infty; H^2).	
\end{cases}
\end{equation}
Furthermore,  for all $r\in (2,\infty)$ if ${\rho}_{\infty}>0$ and $r\in (\gamma,\infty)$ if ${\rho}_{\infty}=0$, we have the following large-time behavior
\begin{align}\label{esti-t}
\displaystyle \lim_{t \rightarrow \infty } \big( \|\rho(\cdot,t)-{\rho}_{\infty}\|_{L^r}+\|(\rho^\frac{1}{8}u)(\cdot,t)\|_{L^4}+\|\nabla u(\cdot,t)\|_{L^2}+\|\nabla H(\cdot,t)\|_{L^2}\big)=0.
\end{align}
\end{theorem}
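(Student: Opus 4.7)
The plan is to combine a local well-posedness result with a continuation argument driven by a priori estimates that are uniform in time once the initial energy $C_0$ is sufficiently small. A local classical solution on some $\Omega\times[0,T_*]$ is first produced by adapting the local theory of \cite{fy2009,tg2016} to the slip conditions \eqref{navier-b}--\eqref{boundary} in the exterior domain, using the compatibility condition \eqref{dt3} to allow vacuum in $\n_0$. The whole game is then to show that, under the a priori assumptions
$$\sup_{0\le t\le T}\|\n\|_{L^\infty}\le 2\bar\n,\qquad \sup_{0\le t\le T}\bigl(\|\nabla u\|_{L^2}^2+\|\nabla H\|_{L^2}^2\bigr)\le K,$$
both bounds can be strictly improved as long as $C_0\le\ve$ for some $\ve=\ve(\mu,\lm,\nu,\ga,a,\n_\infty,\bar\n,M_1,M_2)$. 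A standard open-closed continuity argument then extends the local solution to $[0,\infty)$.

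The a priori estimates follow the scheme of Huang--Li--Xin pushed to MHD in \cite{lxz2013} and \cite{chs2020}, suitably recast in the exterior domain. The starting point is the basic energy identity
$$\frac{d}{dt}\int\Bigl(\tfrac12\n|u|^2+G(\n)+\tfrac12|H|^2\Bigr)dx+\int\bigl(\mu|\nabla u|^2+(\mu+\lm)(\div u)^2+\nu|\curl H|^2\bigr)dx=0,$$
which yields $C_0$-smallness of $\int_0^\infty(\|\nabla u\|_{L^2}^2+\|\curl H\|_{L^2}^2)\,dt$. Introducing the effective viscous flux $F\triangleq(2\mu+\lm)\div u-(P-P_\infty)-\tfrac12|H|^2$ and $\o\triangleq\curl u$, the momentum equation reads $\na F-\mu\na\times\o=\n\dot u-(\curl H)\times H+\na(\tfrac12|H|^2)$, so Lemma \ref{lem-f-td} (Neumann regularity in $\Omega$, based on \cite{NS2004}) together with the div--curl Lemmas \ref{lem-vn}--\ref{lem-high} provides $L^p$ control of $\na F$, $\na\o$ and $\na^2 u$ in terms of $\|\n\dot u\|_{L^p}$ and magnetic terms. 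Multiplying the momentum equation by $\dot u$ and the induction equation first by $H_t$ and then by $\Delta H$, and integrating, produces the key bound on $\|\sqrt\n\dot u\|_{L^2}^2+\|\na^2 H\|_{L^2}^2$; the numerous boundary integrals created by integration by parts under \eqref{navier-b}--\eqref{boundary} are absorbed using the slip-boundary estimates of Lemma \ref{lem-be} (adapted from \cite{cl2021}). This closes a differential inequality of the form
$$\vp'(t)+\|\sqrt\n\dot u\|_{L^2}^2+\|\na^2 H\|_{L^2}^2 \le C\bigl(\vp(t)^{1+\alpha}+C_0^\beta\vp(t)\bigr),\qquad \vp(t)\triangleq\|\nabla u\|_{L^2}^2+\|\nabla H\|_{L^2}^2,$$
which, combined with the basic energy integral, gives time-uniform smallness of $\vp(t)$ for $C_0$ small.

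With $\|\sqrt\n\dot u\|_{L^2}^2$ and $\vp(t)$ controlled uniformly, the strict improvement of \eqref{esti-rho} is recovered from the mass equation $\n_t+u\cdot\na\n+\n\div u=0$ via Zlotnik's inequality, writing $\div u=(2\mu+\lm)^{-1}(F+P-P_\infty+\tfrac12|H|^2)$ and using $L^\infty$-control of $F$ interpolated from the $\na F$ estimate in the exterior domain. Higher regularity \eqref{esti-uh} follows by time-differentiating the momentum and induction equations and applying the Beale--Kato--Majda-type Lemma \ref{lem-bkm} to bound $\|\nabla u\|_{L^\infty}$ logarithmically when estimating $\|\na\n\|_{L^p}$ and $\|\na^2 u\|_{L^p}$; uniqueness is standard. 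The large-time behavior \eqref{esti-t} then falls out from the global integrability $\int_0^\infty(\|\nabla u\|_{L^2}^2+\|\nabla H\|_{L^2}^2)\,dt<\infty$ together with uniform bounds on the corresponding time derivatives, while the decay of $\|\n-\n_\infty\|_{L^r}$ and $\|\n^{1/8}u\|_{L^4}$ follows by interpolation using the exterior Gagliardo--Nirenberg inequality of Lemma \ref{lem-gn}.

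The main obstacle I expect sits at the intersection of two features absent from \cite{chs2020}: the unboundedness of $\Omega$, which forces every bounded-domain Sobolev and div--curl step to be replaced by its counterpart on $\r^3\setminus\bar{\mathbb{D}}$ (Lemmas \ref{lem-gn}, \ref{lem-vn}, \ref{lem-high}, \ref{lem-f-td}) and makes it delicate to close weighted $L^\infty_tL^2_x$ norms; and the slip boundary, which produces curvature-dependent surface integrals on $\partial\Omega$ upon integration by parts with $\dot u$ or derivatives of $H$ that are not absorbable by the standard coercivity of $\mu|\nabla u|^2$. Controlling these boundary terms via Lemma \ref{lem-be}, consistently with the exterior-domain elliptic regularity, is the real technical heart of the proof.
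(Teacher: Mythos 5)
Your overall architecture --- local existence, a bootstrap a priori estimate closed by smallness of $C_0$, the effective viscous flux $F$ with exterior-domain Neumann regularity and the div--curl lemmas, Zlotnik's inequality for the density bound, the Beale--Kato--Majda lemma for the higher regularity, and the slip-boundary estimates of Lemma \ref{lem-be} --- coincides with the paper's. The genuine gap is in the closing mechanism of the bootstrap. You posit the ansatz $\sup_{0\le t\le T}\bigl(\|\nabla u\|_{L^2}^2+\|\nabla H\|_{L^2}^2\bigr)\le K$ and claim that the differential inequality $\varphi'+\|\sqrt{\rho}\dot u\|_{L^2}^2+\|\nabla^2H\|_{L^2}^2\le C\bigl(\varphi^{1+\alpha}+C_0^{\beta}\varphi\bigr)$ together with $\int_0^{\infty}\varphi\,dt\le CC_0$ yields \emph{time-uniform smallness} of $\varphi$. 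This cannot work as stated: the hypothesis \eqref{dt-s} only gives $\varphi(0)\le M_1^2+M_2^2$, a fixed and possibly large constant independent of $C_0$, so no ODE argument produces uniform smallness down to $t=0$, and the superlinear term $\varphi^{1+\alpha}$ is not controllable on the initial layer. This is precisely why the paper works with the time-weighted functionals $A_1,\dots,A_5$ built on $\sigma(t)=\min\{1,t\}$ (see \eqref{As1}--\eqref{As5}): on $[0,\sigma(T)]$ one proves only boundedness, $\sup_{0\le t\le\sigma(T)}\|\nabla u\|_{L^2}^2\le\tilde C(\bar\rho,M_1,M_2)$ (Lemma \ref{lem-a1}), while smallness is obtained only for the weighted quantities $\sigma\|\nabla u\|_{L^2}^2$, $\sigma^{1/4}\|\nabla u\|_{L^2}^2$, etc., after the parabolic smoothing has acted.

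A second, related omission: your two-quantity ansatz drops the functionals $A_3(T)=\sup_{0\le t\le T}\|H\|_{L^3}^3$ and $A_5(T)=\sup_{0\le t\le T}\|\rho^{1/3}u\|_{L^3}^3$, and these are not decorative. The smallness $\|H\|_{L^3}\le CC_0^{1/27}$ is what allows $\|H\cdot\nabla H\|_{L^2}\le\|H\|_{L^3}\|\nabla H\|_{L^6}$ to be absorbed into the good term $\|\curl^2 H\|_{L^2}^2$ with a small prefactor (see \eqref{h-tdh}), and $\|\rho^{1/3}u\|_{L^3}\le CC_0^{1/27}$ is needed to absorb $\int\rho\dot u\cdot(u\cdot\nabla u)\,dx$ into $\|\sqrt{\rho}\dot u\|_{L^2}^2$ in \eqref{bw2-1}; without tracking them your constants depend on $K$ and the loop does not close. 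The remainder of your outline (Zlotnik via $L^\infty$-interpolation of $F$, BKM for $\|\nabla\rho\|_{L^p}$, large-time behavior from global integrability plus the exterior Gagliardo--Nirenberg inequality) does match the paper once the weighted estimates are in place.
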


Then, when ${\rho}_{\infty}>0$ and the initial density contains vacuum state, we can deduce the following large-time behavior of the gradient of the density.
\begin{theorem}\label{th2}
Under the conditions of Theorem \ref{th1}, assume further that ${\rho}_{\infty}>0$ and there exists some point $x_0\in \Omega$ such that $\rho_0(x_0)=0.$  Then the unique global classical solution $(\rho,u,H)$ to the problem \eqref{CMHD}-\eqref{boundary} obtained in
Theorem \ref{th1}  satisfies that for any $r_1>3,$
\begin{align}\label{esti-2}
\displaystyle \lim_{t \rightarrow \infty } \|\nabla\rho (\cdot,t)\|_{L^{r_1}}=\infty.
\end{align}
\end{theorem}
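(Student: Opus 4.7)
The proof will be by contradiction, following the Hoff-Huang-Li-Xin style argument that exploits the conservation of vacuum along particle trajectories.

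Suppose, contrary to \eqref{esti-2}, that for some $r_1>3$ there exist a constant $M>0$ and a sequence $t_n\to\infty$ with $\|\nabla\rho(\cdot,t_n)\|_{L^{r_1}}\le M$. Combined with the uniform bound $0\le\rho\le 2\bar\rho$ from \eqref{esti-rho} and the fact (from the $L^r$-decay in \eqref{esti-t} applied with ${\rho}_\infty>0$) that $\rho(\cdot,t_n)-{\rho}_\infty\in L^{r_1}$, the Morrey embedding $W^{1,r_1}(\Omega)\hookrightarrow C^{0,\alpha}(\bar\Omega)$ with $\alpha=1-3/r_1$ yields a uniform Hölder modulus
\[
|\rho(x,t_n)-\rho(y,t_n)|\le K|x-y|^{\alpha}\quad\text{for all }x,y\in\bar\Omega,
\]
with $K$ independent of $n$.

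Next, introduce the particle trajectory $X(t)$ starting at the vacuum point $x_0$, defined by $\dot X(t)=u(X(t),t)$, $X(0)=x_0$. By the regularity of $u$ in \eqref{esti-uh} and the slip condition $u\cdot n=0$ on $\partial\Omega$, the characteristic is globally defined and remains in $\bar\Omega$. The transport form $\rho_t+u\cdot\nabla\rho=-\rho\,\mathop{\mathrm{div}}\nolimits u$ of the continuity equation gives
\[
\rho(X(t),t)=\rho_0(x_0)\exp\!\Big(-\int_0^t\mathop{\mathrm{div}}\nolimits u(X(s),s)\,ds\Big)=0\quad\text{for all }t\ge 0,
\]
so, setting $x_n:=X(t_n)$, we have $\rho(x_n,t_n)=0$. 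Combining this with the uniform Hölder estimate, choose $r_0>0$ independent of $n$ with $Kr_0^\alpha\le{\rho}_\infty/2$; then $\rho(y,t_n)\le{\rho}_\infty/2$ on $B(x_n,r_0)\cap\Omega$, i.e.\ $|\rho(y,t_n)-{\rho}_\infty|\ge{\rho}_\infty/2$ there.

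For some fixed $r\in(2,\infty)$ we then estimate
\[
\|\rho(\cdot,t_n)-{\rho}_\infty\|_{L^r}^{r}\ge \Big(\tfrac{{\rho}_\infty}{2}\Big)^{r}\,|B(x_n,r_0)\cap\Omega|\ge c_0>0,
\]
which contradicts the decay $\|\rho(\cdot,t)-{\rho}_\infty\|_{L^r}\to 0$ from \eqref{esti-t}, and thus \eqref{esti-2} must hold. The main technical point I expect to need care with is the lower bound $|B(x_n,r_0)\cap\Omega|\ge c_0$ uniformly in $n$: if along the subsequence $x_n$ stays at a positive distance from $\partial\Omega$ one simply uses the full ball, but if $x_n$ approaches $\partial\Omega$ one must invoke the uniform $C^2$-regularity of $\partial\Omega$ to produce an interior cone (or ball) of fixed size at each boundary point, guaranteeing a uniform lower bound on the measure. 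All other ingredients (characteristics, Hölder embedding, $L^r$-decay) are either classical or supplied by Theorem \ref{th1}.
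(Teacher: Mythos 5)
Your proof is correct and follows essentially the same route as the paper: vacuum persists along the particle path emanating from $x_0$, and a uniform bound on $\|\nabla\rho(\cdot,t_n)\|_{L^{r_1}}$ would force the pointwise value $\rho_\infty$ attained by $|\rho-\rho_\infty|$ at the transported vacuum point to be controlled by the decaying $L^r$ norm of $\rho-\rho_\infty$, contradicting \eqref{esti-t}. The only cosmetic difference is that the paper compresses your Morrey-plus-ball-measure step into a single application of the interpolation inequality \eqref{g2}, estimating $\rho_\infty\le\|\rho(\cdot,t_{n_j})-\rho_\infty\|_{C(\bar\Omega)}\le C\|\rho(\cdot,t_{n_j})-\rho_\infty\|_{L^{3}}^{\theta}\|\nabla\rho(\cdot,t_{n_j})\|_{L^{r_1}}^{1-\theta}$ directly.
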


\begin{remark}\label{rem:1} From Sobolev embedding theorem and \eqref{esti-uh}$_1$ with $q>3$, the solution obtained in Theorem \ref{th1} becomes a classical one away from the initial time. As we known, this is the first result concerning the global existence of classical solutions to the compressible MHD system in a 3D exterior domain with large oscillations and vacuum.
\end{remark}

\begin{remark}\label{rem:2}
When we consider the following generalized slip boundary for the velocity field:
\begin{align*}\label{navi1} u\cdot n=0,\,\,\,\curl u\times n=-Au \,\,\,&\text{on} \,\,\,\partial\Omega, \end{align*}
and assume that the $3\times 3$ symmetric matrix $A$ is smooth and positive semi-definite, and even if the restriction on $A$ is relaxed to $A\in W^{2,6}$ and the negative eigenvalues of $A$ (if exist) are small enough, in particular, set $A = B-2D(n)$, where $B\in W^{2,6}$ is a positive semi-definite $3\times 3$ symmetric matrix, Theorem \ref{th1} and \ref{th2} will still hold provided that $2\mu+3\lambda>0$. This can be achieved by a similar way as in \cite{cl2019,chs2020}.
\end{remark}

\begin{remark}\label{rem:3} For the magnetic field, we also can subject to the Dirichlet condition
\begin{equation*}
\displaystyle  H =0,\,\,\,\text{on} \,\,\,\partial\Omega,
\end{equation*}
or the insulating boundary condition (see \cite{gll2006})
\begin{equation*}
\displaystyle  H \times n =0,\,\,\,\text{on} \,\,\,\partial\Omega.
\end{equation*}
After some slight modification of the proof in this paper, Theorem \ref{th1} and \ref{th2} will still hold.
\end{remark}

The rest of the paper is organized as follows.
In Section \ref{se2}, we review some known lemmas and derive the elementary energy estimates and some key a priori estimates that we use intensively in this paper.
Section \ref{se3} is devoted to deriving the necessary time-independent lower-order estimates and time-dependent higher-order estimates, which can guarantee the local classical solution to be a global classical one. Finally, the proof of Theorem \ref{th1}-\ref{th2} will be completed in Section \ref{se5}.

\section{Preliminaries}\label{se2}
In this section, we list some known facts and elementary inequalities that are used extensively in this paper. We also derive the elementary energy estimates for the system \eqref{CMHD}-\eqref{boundary} and some key a priori estimates.

\subsection{Some basic inequalities and lemmas}\label{appendix-a}
We first state the following Gagliardo-Nirenberg type inequality in the exterior domain (see \cite{cm2004}).
\begin{lemma}\label{lem-gn}
Assume that $\Omega$ is an exterior domain of some simply connected domain $\mathbb{D}$ in $\r^3$. For  $p\in [2,6],\,q\in(1,\infty),$ and $ r\in  (3,\infty),$ there exist two generic constants $C>0$ which may depend  on $p$, $q$ and $r$ such that for any  $f\in H^1({\O }) $ and $g\in  L^q(\O )\cap D^{1,r}(\O), $ such that
\be\label{g1}\|f\|_{L^p(\O)}\le C \|f\|_{L^2}^{(6-p)/(2p)}\|\na
f\|_{L^2}^{(3p-6)/(2p)},\ee
\be\label{g2}\|g\|_{C\left(\ol{\O }\right)} \le C
\|g\|_{L^q}^{q(r-3)/(3r+q(r-3))}\|\na g\|_{L^r}^{3r/(3r+q(r-3))}.
\ee
\end{lemma}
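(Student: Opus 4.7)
The plan is to reduce both inequalities to classical Gagliardo--Nirenberg-type inequalities on $\mathbb{R}^3$ by means of a bounded linear extension operator. Since $\partial\Omega=\partial\mathbb{D}$ is smooth and compact, one can construct (via Stein-type reflection across $\partial\Omega$ in a small tubular neighborhood, combined with a cutoff supported in a compact strip near $\partial\Omega$) a linear extension $E$ taking functions on $\Omega$ to functions on $\mathbb{R}^3$, with $Ef=f$ on $\Omega$ and the separated norm bounds
\begin{equation*}
\|Ef\|_{L^p(\mathbb{R}^3)}\le C\|f\|_{L^p(\Omega)},\qquad \|\nabla(Ef)\|_{L^p(\mathbb{R}^3)}\le C\|\nabla f\|_{L^p(\Omega)},
\end{equation*}
for every $p\in[1,\infty]$. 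This is exactly the extension used in \cite{cm2004}; the essential and delicate point is that the gradient bound carries no zeroth-order correction, which is afforded by the compactness of $\partial\Omega$ and the exterior geometry of $\Omega$.

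Granted $E$, the proof of \eqref{g1} is immediate. Set $\tilde f=Ef\in H^1(\mathbb{R}^3)$. The Sobolev inequality on $\mathbb{R}^3$ gives $\|\tilde f\|_{L^6(\mathbb{R}^3)}\le C\|\nabla\tilde f\|_{L^2(\mathbb{R}^3)}$, while H\"older's inequality with $\tfrac{1}{p}=\tfrac{\theta}{2}+\tfrac{1-\theta}{6}$ (so that $\theta=(6-p)/(2p)$ and $1-\theta=(3p-6)/(2p)$) yields the interpolation
\begin{equation*}
\|\tilde f\|_{L^p(\mathbb{R}^3)}\le \|\tilde f\|_{L^2(\mathbb{R}^3)}^{\theta}\|\tilde f\|_{L^6(\mathbb{R}^3)}^{1-\theta}.
\end{equation*}
The restriction $\|f\|_{L^p(\Omega)}\le\|\tilde f\|_{L^p(\mathbb{R}^3)}$, together with the separated extension bounds applied to $\|\tilde f\|_{L^2(\mathbb{R}^3)}$ and $\|\nabla\tilde f\|_{L^2(\mathbb{R}^3)}$, then gives \eqref{g1}.

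The second inequality \eqref{g2} follows by the same scheme. Setting $\tilde g=Eg$, the Morrey embedding $D^{1,r}(\mathbb{R}^3)\hookrightarrow C^{0,1-3/r}(\mathbb{R}^3)$ for $r>3$ combined with $L^q$-averaging and optimization of the averaging radius produces the full-space estimate
\begin{equation*}
\|\tilde g\|_{L^\infty(\mathbb{R}^3)}\le C\|\tilde g\|_{L^q(\mathbb{R}^3)}^{q(r-3)/(3r+q(r-3))}\|\nabla\tilde g\|_{L^r(\mathbb{R}^3)}^{3r/(3r+q(r-3))},
\end{equation*}
the exponents being forced by scale-invariance under $g\mapsto g(\lambda\cdot)$. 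Since $\|g\|_{C(\bar\Omega)}\le \|\tilde g\|_{L^\infty(\mathbb{R}^3)}$, the extension bounds finish the proof. The main obstacle throughout is producing the extension $E$ with sharp, separated norm bounds; once it is in hand, every remaining step is classical analysis on $\mathbb{R}^3$.
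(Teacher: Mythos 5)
The paper offers no proof of this lemma at all; it is quoted verbatim from \cite{cm2004}. So your proposal should be judged on its own merits, and its overall architecture --- reduce to $\mathbb{R}^3$ by a linear extension operator with \emph{homogeneous, separated} norm bounds, then invoke the classical interpolation and Morrey-type estimates on the whole space --- is a legitimate route, and your exponent bookkeeping (the value of $\theta$ in \eqref{g1}, the scaling check for \eqref{g2}, the restriction inequalities $\|f\|_{L^p(\Omega)}\le\|\tilde f\|_{L^p(\mathbb{R}^3)}$ and $\|g\|_{C(\ol{\Omega})}\le\|\tilde g\|_{L^\infty(\mathbb{R}^3)}$) is all correct. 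You also correctly identify that the entire content of the lemma is concentrated in the separated gradient bound $\|\nabla(Ef)\|_{L^p(\mathbb{R}^3)}\le C\|\nabla f\|_{L^p(\Omega)}$: with only the usual inhomogeneous bound $\|Ef\|_{W^{1,p}}\le C\|f\|_{W^{1,p}}$ one would obtain \eqref{g1} with $\|\nabla f\|_{L^2}$ replaced by $\|f\|_{L^2}+\|\nabla f\|_{L^2}$, which is strictly weaker.

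The gap is that you assert this crucial bound rather than prove it, and the construction you actually describe would fail to deliver it. A Stein-type reflection composed with a cutoff $\chi$ supported in a strip near $\partial\Omega$ produces, upon differentiation, terms of the form $f\,\nabla\chi$, so the best one gets from that construction is $\|\nabla(Ef)\|_{L^p(\mathbb{R}^3)}\le C\bigl(\|\nabla f\|_{L^p(\Omega)}+\|f\|_{L^p(A)}\bigr)$ with $A$ a bounded neighborhood of $\partial\Omega$ in $\Omega$; appealing to ``the compactness of $\partial\Omega$ and the exterior geometry'' is not an argument that removes the zeroth-order term. The standard repair is short but must be said: take $A=B_{2R}\setminus\ol{\mathbb{D}}$, let $f_A$ denote the mean of $f$ over $A$, apply the bounded extension operator $E_0:W^{1,p}(A)\to W^{1,p}(B_{2R})$ to $f-f_A$, and define $Ef=f$ on $\Omega$ and $Ef=E_0(f-f_A)+f_A$ on $\ol{\mathbb{D}}$ (the two definitions agree on $A$, so the traces match across $\partial\Omega$). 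The Poincar\'e--Wirtinger inequality $\|f-f_A\|_{L^p(A)}\le C\|\nabla f\|_{L^p(A)}$ on the bounded connected Lipschitz set $A$ then yields $\|\nabla(Ef)\|_{L^p(\mathbb{R}^3)}\le C\|\nabla f\|_{L^p(\Omega)}$, while $|f_A|\,|\mathbb{D}|^{1/p}\le C\|f\|_{L^p(A)}$ gives $\|Ef\|_{L^p(\mathbb{R}^3)}\le C\|f\|_{L^p(\Omega)}$; since neither $E_0$ nor $f_A$ depends on the exponent, a single operator serves all $p\in[1,\infty]$ simultaneously, as needed for \eqref{g2} where $q\neq r$ in general. (For \eqref{g2} one should also observe that $g\in L^q(\Omega)\cap D^{1,r}(\Omega)$ implies $g\in W^{1,\min(q,r)}(A)$, so the extension and the Poincar\'e inequality apply.) With this paragraph supplied, your proof is complete.
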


Next, we give the following Zlotnik inequality (see \cite{zlo2000}), which will be used to get the uniform (in time) upper bound of the density.
\begin{lemma}\label{lem-z}
Suppose the function $y$ satisfy
\bnn y'(t)= g(y)+b'(t) \mbox{  on  } [0,T] ,\quad y(0)=y^0, \enn
with $ g\in C(R)$ and $y, b\in W^{1,1}(0,T).$ If $g(\infty)=-\infty$
and \be\label{a100} b(t_2) -b(t_1) \le N_0 +N_1(t_2-t_1)\ee for all
$0\le t_1<t_2\le T$
  with some $N_0\ge 0$ and $N_1\ge 0,$ then
\bnn y(t)\le \max\left\{y^0,\overline{\zeta} \right\}+N_0<\infty
\mbox{ on
 } [0,T],
\enn
where $\overline{\zeta} $ is a constant such
that \be\label{a101} g(\zeta)\le -N_1 \quad\mbox{ for }\quad \zeta\ge \overline{\zeta}.\ee
\end{lemma}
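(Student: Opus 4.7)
The plan is to prove the bound by a direct contradiction argument that distinguishes two cases according to whether $y$ revisits the region $\{\zeta \le \bar{\zeta}\}$ before time $t$. The key point is that outside that region we have $g(y) \le -N_1$, which exactly cancels the linear-in-time growth $N_1(t_2-t_1)$ allowed for $b$, leaving only the constant budget $N_0$.

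First I would fix $\bar{\zeta}$ satisfying \eqref{a101} (such a $\bar{\zeta}$ exists because $g(\infty)=-\infty$), set $M:=\max\{y^0,\bar{\zeta}\}$, and integrate the ODE in the absolutely continuous sense: for any $0\le s\le t\le T$,
\begin{equation*}
y(t)-y(s)=\int_s^t g(y(\tau))\,d\tau+b(t)-b(s).
\end{equation*}
Fix $t\in[0,T]$ arbitrary; I want $y(t)\le M+N_0$. Define $t_1:=\sup\{\tau\in[0,t]:y(\tau)\le\bar{\zeta}\}$, with the convention $\sup\emptyset=0$.

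In the first case, the sup set is nonempty, so continuity of $y$ (from $y\in W^{1,1}$) gives $y(t_1)=\bar{\zeta}$ (or $t_1=t$, in which case $y(t)\le\bar{\zeta}\le M+N_0$ and we are done). On the open interval $(t_1,t)$ we have $y(\tau)>\bar{\zeta}$, hence $g(y(\tau))\le -N_1$ by \eqref{a101}. Plugging in and using the hypothesis \eqref{a100} for $b(t)-b(t_1)$:
\begin{equation*}
y(t)\le\bar{\zeta}+\int_{t_1}^{t}(-N_1)\,d\tau+N_0+N_1(t-t_1)=\bar{\zeta}+N_0\le M+N_0.
\end{equation*}
In the second case the sup set is empty, which forces $y^0>\bar{\zeta}$, so $M=y^0$, and $y(\tau)>\bar{\zeta}$ on all of $[0,t]$. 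The same computation with $t_1$ replaced by $0$ yields $y(t)\le y^0+N_0=M+N_0$.

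The only technical point to watch is the regularity: $y\in W^{1,1}(0,T)$ gives continuity of $y$ (needed to locate $t_1$ with $y(t_1)=\bar{\zeta}$) and the fundamental theorem of calculus in integral form, while $b\in W^{1,1}(0,T)$ together with \eqref{a100} controls the increments of $b$ directly on any subinterval without having to touch $b'$ pointwise. This is the main (and essentially the only) subtlety; once the two-case split is set up, the cancellation $-N_1(t-t_1)+N_1(t-t_1)=0$ makes the estimate immediate, and finiteness of $M+N_0$ then yields $y(t)<\infty$ on $[0,T]$.
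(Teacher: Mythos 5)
Your argument is correct. Note that the paper itself gives no proof of this lemma --- it is quoted verbatim from Zlotnik \cite{zlo2000} as a known tool --- so there is no internal proof to compare against; your write-up is essentially the standard proof of Zlotnik's inequality, built on the last-exit-time $t_1$ from the region $\{y\le\overline{\zeta}\}$ and the exact cancellation of $-N_1(t-t_1)$ from $g$ against $+N_1(t-t_1)$ from the increment bound on $b$. All the delicate points are handled properly: $\overline{\zeta}$ exists because $g(\infty)=-\infty$; the absolutely continuous representative of $y\in W^{1,1}$ justifies both the integral form of the equation and the identification $y(t_1)=\overline{\zeta}$ when $t_1<t$ (the sublevel set is closed, so $y(t_1)\le\overline{\zeta}$, while right-continuity and $y>\overline{\zeta}$ on $(t_1,t]$ force equality); and hypothesis \eqref{a100} is used only as an upper bound on increments of $b$, never pointwise on $b'$. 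The degenerate subcases ($t_1=t$, or the sublevel set empty so that $y^0>\overline{\zeta}$ and $M=y^0$) are also covered, so the bound $y(t)\le\max\{y^0,\overline{\zeta}\}+N_0$ holds for every $t\in[0,T]$.
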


The following two lemmas are given in \cite[Theorem 3.2]{Von1992} and \cite[Theorem 5.1]{lhm2016}.
\begin{lemma}   \label{lem-vn}
Assume that $\Omega$ is an exterior domain of some simply connected domain $\mathbb{D}$ in $\r^3$ with $C^{1,1}$ boundary. For $v\in D^{1,q}(\Omega)$ with $v\cdot n=0$ on $\partial\Omega$, it holds that
$$\|\nabla v\|_{L^{q}(\Omega)}\leq C(\|\div v\|_{L^{q}(\Omega)}+\|\curl v\|_{L^{q}(\Omega)})\,\,\,for\,\, any\,\, 1<q<3,$$
and
$$\|\nabla v\|_{L^q(\Omega)}\leq C(\|\div v\|_{L^q(\Omega)}+\|\curl v\|_{L^q(\Omega)}+\|\nabla v\|_{L^2(\Omega)})\,\,\,for\,\, any\,\, 3\leq q<+\infty.$$
\end{lemma}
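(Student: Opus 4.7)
The plan is to reduce the bound to standard $L^q$ elliptic theory in exterior domains via the vector identity
\[
-\Delta v = \curl\curl v - \nabla\div v.
\]
I would first decompose $v = \nabla\varphi + w$ by a Helmholtz-type decomposition adapted to $\Omega$, where $\varphi$ solves the exterior Neumann problem $\Delta\varphi = \div v$ in $\Omega$, $\partial_n\varphi = 0$ on $\partial\Omega$ with appropriate decay at infinity. Then $w := v - \nabla\varphi$ is automatically divergence free, satisfies $w\cdot n = 0$ on $\partial\Omega$, and has $\curl w = \curl v$. Because $\mathbb{D}$ is simply connected with connected boundary and $\Omega\subset\mathbb{R}^3$ is therefore simply connected, there are no nontrivial harmonic Neumann fields with the required decay at infinity, so the decomposition is unique in the appropriate class.

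For the gradient part, Calderón--Zygmund theory for the exterior Neumann problem yields $\|\nabla^2\varphi\|_{L^q(\Omega)} \le C\|\div v\|_{L^q(\Omega)}$. For the divergence-free part, simple connectivity of $\Omega$ lets me write $w = \curl A$ for a vector potential with the gauge $\div A = 0$ and boundary data compatible with $w\cdot n = 0$; the problem then reduces to the vector Poisson equation $-\Delta A = \curl v$, and its $L^q$ regularity in exterior domains is supplied by the von Wahl / Galdi--Simader--Sohr theory. Combining the two contributions gives the desired control of $\nabla v$.

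The dichotomy between $q<3$ and $q\ge 3$ arises entirely from the behaviour at infinity. For $1<q<3$, the embedding $D^{1,q}(\Omega)\hookrightarrow L^{q^*}(\Omega)$ with $q^* = 3q/(3-q)$ forces $v$ to decay, and together with $v\cdot n = 0$ on $\partial\Omega$ this rules out any nontrivial kernel for the div-curl system, so the estimate closes without a lower-order term. For $q\ge 3$ this embedding fails, so the data $(\div v,\curl v, v\cdot n|_{\partial\Omega})$ no longer pin down $v$ uniquely inside $D^{1,q}$ — a bounded, non-decaying mode can survive — and an additional norm is needed to excise it. Including $\|\nabla v\|_{L^2(\Omega)}$ does the job because $D^{1,2}(\Omega)\hookrightarrow L^6(\Omega)$ supplies the missing decay at infinity and can be combined with a cut-off/localization argument to patch the interior Calderón--Zygmund estimate with the far-field behaviour.

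The main technical obstacle is verifying $L^q$-boundedness of the exterior Neumann resolvent and of the associated Biot--Savart-type operator $\curl v \mapsto \nabla w$ in the correct homogeneous Sobolev spaces, together with the bookkeeping of boundary traces in the Helmholtz decomposition. These are exactly the ingredients proved in \cite{Von1992} and \cite{lhm2016}, so once the reductions above are in place, the two stated inequalities follow by invoking those references.
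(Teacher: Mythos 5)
The paper offers no proof of this lemma at all: it simply quotes it from \cite[Theorem 3.2]{Von1992} and \cite[Theorem 5.1]{lhm2016}, and your argument ultimately rests on invoking exactly those same two references for the key $L^q$ bounds, so you are taking essentially the same route. Your preliminary Helmholtz-decomposition sketch and your explanation of the $1<q<3$ versus $q\ge 3$ dichotomy via the failure of the homogeneous Sobolev embedding at infinity are a plausible account of what those references actually establish, but they are not independently verified here and are not needed once the citations are in place.
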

\begin{lemma}\label{lem-curl}
Assume that $\Omega$ is an exterior domain of some simply connected domain $\mathbb{D}$ in $\r^3$ with $C^{1,1}$ boundary. For any $v\in W^{1,q}(\Omega)\,\,(1<q<+\infty)$ with $v\times n=0$ on $\partial\Omega$, it holds that
$$\|\nabla v\|_{L^q(\Omega)}\leq C(\|v\|_{L^q(\Omega)}+\|\div v\|_{L^q(\Omega)}+\|\curl v\|_{L^q(\Omega)}).$$
\end{lemma}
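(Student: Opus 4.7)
The plan is to reduce the exterior-domain estimate to two better-understood cases by a localization argument: a whole-space Calder\'on--Zygmund bound away from $\p\O$ and a bounded-domain div-curl bound near $\p\O$. Fix $R_0>0$ large enough that $\p\O\subset B_{R_0}$, and pick a smooth cutoff $\eta\in C_c^\infty(\r^3)$ with $\eta\equiv 1$ on $B_{R_0}$ and $\eta\equiv 0$ outside $B_{2R_0}$. Writing $v=\eta v+(1-\eta)v$, I would estimate $\na(\eta v)$ and $\na((1-\eta)v)$ separately and then add the bounds.

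For the far-field piece $w:=(1-\eta)v$, the support of $w$ lies in $\O\setminus B_{R_0}$, and since $1-\eta$ vanishes in a neighborhood of $\p\O$, the extension of $w$ by zero to $\r^3$ lies in $W^{1,q}(\r^3)$. The vector identity $-\Delta w=\curl\curl w-\na\div w$ expresses $\na w$ as a sum of Riesz-type singular integrals applied to $\div w$ and $\curl w$, so Calder\'on--Zygmund theory on $\r^3$ yields $\|\na w\|_{L^q(\r^3)}\le C(\|\div w\|_{L^q}+\|\curl w\|_{L^q})$ for every $1<q<\infty$. Expanding the cutoff inside $\div$ and $\curl$ produces commutators of the form $(\na\eta)\cdot v$ and $(\na\eta)\times v$, which are controlled by $\|v\|_{L^q(\O)}$ since $\na\eta$ is smooth and compactly supported.

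For the near-boundary piece $\tilde v:=\eta v$, the support lies in the bounded smooth subdomain $\O\cap B_{2R_0}$, and the tangential trace $\tilde v\times n=\eta(v\times n)=0$ is preserved on $\p\O$ (while $\tilde v$ vanishes identically on $\p B_{2R_0}$, so the tangential-trace condition holds on the full boundary of this truncated domain). On this bounded domain, the classical $L^q$ div-curl estimate for the boundary condition $\tilde v\times n=0$ (the tangential-trace analogue of the Bolik--von Wahl result underlying Lemma \ref{lem-vn}, available for all $1<q<\infty$ by viewing the div-curl system as an elliptic boundary-value problem in the Agmon--Douglis--Nirenberg sense) gives $\|\na\tilde v\|_{L^q}\le C(\|\tilde v\|_{L^q}+\|\div\tilde v\|_{L^q}+\|\curl\tilde v\|_{L^q})$. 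Expanding the cutoff once more produces $\div v$, $\curl v$, and zeroth-order commutator terms absorbed into $\|v\|_{L^q}$. Summing the two localized bounds gives the stated inequality.

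The main obstacle is the bounded-domain ingredient itself: one must verify that the div-curl system supplemented by the tangential boundary condition $u\times n=0$ is elliptic and admits the full $L^q$-regularity for $1<q<\infty$, with a constant depending only on the geometry of $\O\cap B_{2R_0}$ and on $q$. This is the only step that genuinely uses the $C^{1,1}$ hypothesis on $\p\O$; everything else is either a routine cutoff computation or classical Calder\'on--Zygmund.
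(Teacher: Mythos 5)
Your proposal is correct in its overall structure, but note that the paper does not actually prove this lemma at all: it is quoted verbatim from Louati--Meslameni--Razafison \cite[Theorem 5.1]{lhm2016}, whose weighted $L^p$ theory for vector potential operators in exterior domains delivers the estimate directly. Your localization argument is therefore a genuinely different (and more self-contained) route, and it is sound: the zero extension of $(1-\eta)v$ does lie in $W^{1,q}(\mathbb{R}^3)$, the Riesz-transform identity $\nabla w=\nabla(-\Delta)^{-1}(\curl\curl w-\nabla\div w)$ extends from $C_c^\infty(\mathbb{R}^3)$ by density, and the cutoff commutators $(\nabla\eta)\cdot v$, $(\nabla\eta)\times v$ are absorbed into $\|v\|_{L^q}$ exactly as you say. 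For the near-boundary piece, the truncated domain $\Omega\cap B_{2R_0}$ is a bounded $C^{1,1}$ domain whose boundary has two components, so the div-curl system with $\tilde v\times n=0$ has a nontrivial finite-dimensional space of harmonic Dirichlet fields; this is harmless precisely because the estimate you invoke carries the lower-order term $\|\tilde v\|_{L^q}$ on the right, and that bounded-domain ingredient is indeed available for all $1<q<\infty$ (von Wahl \cite{Von1992}, which the paper already cites for Lemma \ref{lem-vn}, or Amrouche--Seloula). What your approach buys is independence from the weighted-space machinery of \cite{lhm2016}, at the cost of importing the bounded-domain tangential-trace estimate as a black box --- which, as you correctly identify, is the only step where the $C^{1,1}$ regularity of $\partial\Omega$ is genuinely used.
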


Moreover, we have the following conclusion (see \cite{cl2021}).
\begin{lemma}\label{lem-high}
Assume that $\Omega$ is an exterior domain of some simply connected domain $\mathbb{D}$ in $\r^3$ with $C^{1,1}$ boundary. For $v\in D^{k+1,p}(\Omega)\cap D^{1,2}(\Omega)$ for some $k\geq 0$, $p\in[2,6]$ with $v\cdot n=0$ or $v\times n=0$ on $\partial\Omega$ and $v(x,t)\rightarrow0$ as $|x|\rightarrow\infty$, there exists a positive constant $C=C(q,k,\mathbb{D})$ such that
$$\|\nabla v\|_{W^{k,p}(\Omega)}\leq C(\|\div v\|_{W^{k,p}(\Omega)}+\|\curl v\|_{W^{k,p}(\Omega)}+\|\nabla v\|_{L^2(\Omega)}).$$
\end{lemma}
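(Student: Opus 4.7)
The plan is to localize the estimate by a cutoff that separates a bounded neighborhood of $\p\O$ from the far field: apply Lemmas \ref{lem-vn}/\ref{lem-curl} on the bounded piece, apply the classical whole-space div-curl identity (via Riesz transforms on $\r^3$) on the far piece, and absorb all lower-order tails using the fact that $v\in D^{1,2}(\O)$ with $v\to0$ at infinity yields the Sobolev inequality $\|v\|_{L^6(\O)}\le C\|\na v\|_{L^2(\O)}$.

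Fix $R_0>0$ with $\bar{\mathbb D}\subset B_{R_0/2}$ and a cutoff $\phi\in C_c^\infty(\r^3)$ with $\phi\equiv1$ on $B_{R_0}$ and $\mathrm{supp}\,\phi\subset B_{2R_0}$. Write $v=v_1+v_2$ with $v_1=\phi v$ and $v_2=(1-\phi)v$. Then $v_1$ is supported in the bounded smooth domain $\O_0:=\O\cap B_{2R_0}$ and inherits the boundary condition on $\p\O$, while $v_2$ is supported in $\r^3\setminus B_{R_0}$, extends by zero to $\r^3$, and vanishes at infinity. For the base case $k=0$, applying Lemma \ref{lem-vn} (when $v\cdot n=0$) or Lemma \ref{lem-curl} (when $v\times n=0$) to $v_1$, together with the Leibniz identities $\div v_1=\phi\,\div v+\na\phi\cdot v$ and $\curl v_1=\phi\,\curl v+\na\phi\times v$, yields
$$
\|\na v_1\|_{L^p(\O_0)}\le C\bigl(\|\div v\|_{L^p(\O)}+\|\curl v\|_{L^p(\O)}+\|v\|_{L^p(\O_0)}+\|\na v\|_{L^2(\O)}\bigr).
$$
Since $\O_0$ is bounded, H\"older and Sobolev give $\|v\|_{L^p(\O_0)}\le C\|v\|_{L^6(\O_0)}\le C\|\na v\|_{L^2(\O)}$ for $2\le p\le6$. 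For $v_2$, the classical $L^p$ div-curl identity on $\r^3$ gives $\|\na v_2\|_{L^p(\r^3)}\le C(\|\div v_2\|_{L^p}+\|\curl v_2\|_{L^p})$ for $1<p<\infty$, and the cutoff tails are handled as above. Summing the two contributions proves the $k=0$ case.

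For $k\ge1$, proceed by induction and repeat the split. On $\O_0$ I invoke the higher-order elliptic regularity of the div-curl system on smooth bounded domains with the prescribed boundary condition (cf.\ \cite{Von1992}),
$$
\|\na v_1\|_{W^{k,p}(\O_0)}\le C\bigl(\|\div v_1\|_{W^{k,p}(\O_0)}+\|\curl v_1\|_{W^{k,p}(\O_0)}+\|v_1\|_{W^{k,p}(\O_0)}\bigr),
$$
and estimate the right-hand side via the Leibniz expansions of $\div v_1$ and $\curl v_1$, the inductive hypothesis applied to $\na v$ in $W^{k-1,p}(\O_0)$, and the $k=0$ bound already proved. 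On $\r^3$, the higher-order Calder\'on-Zygmund estimate gives the analogous bound for $\na v_2$ without any lower-order term, and the commutators $[\na^j,\phi]$ reduce to quantities already controlled by induction.

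The main subtlety is that on an unbounded exterior domain the $L^p$-norms of $v$ itself are \emph{not} controlled by $\|\na v\|_{L^2}$ when $p<6$; only $\|v\|_{L^6}$ is. The cutoff is precisely the device that makes the ``unwanted'' $\|v\|_{L^p}$ terms arising from Lemma \ref{lem-curl} (and from the higher-order elliptic estimate) harmless, because they appear only on the bounded set $\O_0$, where H\"older collapses every $L^p$-norm with $2\le p\le6$ into $\|v\|_{L^6(\O_0)}$ and hence into $\|\na v\|_{L^2(\O)}$; the exterior nature of $\O$ is thereby confined to the far-field piece $v_2$, where the standard whole-space div-curl identity applies with no lower-order term.
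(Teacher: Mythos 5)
Your argument is correct: the paper itself gives no proof of Lemma \ref{lem-high} (it only cites \cite{cl2021}), and your cutoff decomposition --- bounded-domain div--curl estimates from Lemmas \ref{lem-vn}/\ref{lem-curl} near $\partial\Omega$, the whole-space Calder\'on--Zygmund identity for the far-field piece, and absorption of the cutoff tails via $\|v\|_{L^6(\Omega)}\le C\|\nabla v\|_{L^2(\Omega)}$ --- is essentially the same localization argument used in that reference. The only points left implicit (ruling out the harmonic remainder for $v_2$ via $\nabla v_2\in L^2$ and decay at infinity, and choosing $\mathrm{supp}\,\phi$ compactly inside $B_{2R_0}$ so that $v_1$ vanishes near $\partial B_{2R_0}$) are standard and do not affect the validity of the proof.
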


Besides, similar to \cite{BKM1984}, we need a Beale-Kato-Majda type inequality with respect to the slip boundary condition \eqref{navier-b} which can be found in \cite{cl2019}.
\begin{lemma}\label{lem-bkm}
For $3<q<\infty$, assume that $u\cdot n=0$ and $\curl u\times n=0$ on $\partial\Omega$, $\nabla u\in W^{1,q}$, then there is a constant  $C=C(q)$ such that  the following estimate holds
\bnn\ba
\|\na u\|_{L^\infty}\le C\left(\|{\rm div}u\|_{L^\infty}+\|\curl u\|_{L^\infty} \right)\ln(e+\|\na^2u\|_{L^q})+C\|\na u\|_{L^2} +C .
\ea\enn
\end{lemma}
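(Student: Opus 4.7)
The plan is to reduce the estimate to the classical whole-space Beale-Kato-Majda logarithmic Sobolev inequality
$$\|\nabla v\|_{L^\infty(\r^3)} \le C\bigl(\|\div v\|_{L^\infty}+\|\curl v\|_{L^\infty}\bigr)\ln\bigl(e+\|\nabla^2 v\|_{L^q(\r^3)}\bigr)+C\|\nabla v\|_{L^2(\r^3)}+C,$$
which follows from the standard singular-integral representation of $\nabla v$ in terms of $\div v$ and $\curl v$ combined with the BMO/$L^\infty$ logarithmic interpolation and $W^{1,q}\hookrightarrow C^0$ for $q>3$. To export this to the exterior domain $\Omega$, I would use a finite partition of unity $\{\eta_0,\eta_1,\dots,\eta_N\}$ subordinate to a cover of $\overline{\Omega}$, with $\eta_0$ supported at positive distance from $\partial\Omega$ and each $\eta_j$ ($j\ge 1$) supported in a small boundary chart $U_j$ admitting a smooth flattening diffeomorphism sending $U_j\cap\overline\Omega$ to a ball in the closed half-space $\{y_3\ge 0\}$ with $\partial\Omega$ mapped to $\{y_3=0\}$.

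For $\eta_0 u$ I would extend by zero to $\r^3$ and apply the whole-space inequality directly to $v=\eta_0 u$; the commutator terms $u\na\eta_0$ appearing in $\div(\eta_0 u)$, $\curl(\eta_0 u)$ and $\nabla^2(\eta_0 u)$ are all lower-order in $u$ and $\nabla u$, hence absorbable using Lemma \ref{lem-gn} and Lemma \ref{lem-high}. For each boundary piece $\eta_j u$, after the flattening I would use the slip conditions to extend across $\{y_3=0\}$: tangential components of $u$ by even reflection in $y_3$ and the normal component by odd reflection. The condition $u\cdot n=0$ makes this extension continuous, and a direct parity check shows that, under this reflection, $\div u$ extends as an even function while the tangential components of $\curl u$ extend as odd functions and the normal component of $\curl u$ as even; the condition $\curl u\times n=0$ forces the tangential components of $\curl u$ to vanish on $\{y_3=0\}$, so the reflected vector field has no singular (distributional) contribution across the interface and, in particular, $\|\div\tilde u\|_{L^\infty}$, $\|\curl\tilde u\|_{L^\infty}$, $\|\nabla^2\tilde u\|_{L^q}$ on $\r^3$ are all controlled by the corresponding norms of $u$ in the original chart. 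Applying the whole-space inequality to the reflected field and transporting back through the diffeomorphism (which costs only constants depending on $\partial\Omega$) yields the desired local bound.

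Summing over $j=0,\dots,N$ gives an estimate of the stated form, modulo an extra contribution $C\|u\|_{L^\infty}$ produced by the cutoffs; this can be reabsorbed because, by Gagliardo-Nirenberg (Lemma \ref{lem-gn}) and Lemma \ref{lem-high},
$\|u\|_{L^\infty}\le C(\|\nabla u\|_{L^2}+\|\nabla u\|_{L^q})$ is dominated by $C\|\nabla u\|_{L^2}+C\ln(e+\|\nabla^2 u\|_{L^q})$ (plus lower order), with the log-term reappearing on the right-hand side at an admissible level. The step I expect to be the main obstacle is verifying the reflection argument rigorously: one must check that both slip conditions, $u\cdot n=0$ \emph{and} $\curl u\times n=0$, are needed and sufficient so that the reflected field has no jump in $\div$ or $\curl$ across the interface, so that its $W^{2,q}$ regularity in the full ball is genuinely controlled by the one-sided $W^{2,q}$ norm; a secondary technical nuisance is keeping the commutator and lower-order terms under control by interpolating through Lemma \ref{lem-gn} rather than losing derivatives.
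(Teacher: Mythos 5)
First, note that the paper does not prove this lemma at all: it is quoted verbatim from \cite{cl2019}, where it is established via a $BMO$ estimate for the div--curl system under the slip conditions (obtained from the potential-theoretic/Green's function representation of the associated Neumann-type problems) combined with the logarithmic interpolation inequality $\|f\|_{L^\infty}\le C\big(1+\|f\|_{BMO}\ln(e+\|f\|_{W^{1,q}})\big)$. So there is no in-paper argument to compare against; your proposal must stand on its own, and it has a genuine gap.

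The gap is in the boundary charts. A flattening diffeomorphism does not commute with $\div$ and $\curl$: if $v$ is the transported field, then $\div_y v=\div_x u\circ\Phi+E_1$ and $\curl_y v=\curl_x u\circ\Phi+E_2$ with $|E_i|\le C\,|\na\Phi-I|\,|\na u|+C|\na^2\Phi|\,|u|$. Since $\partial\Omega$ is curved, $\na\Phi-I$ does not vanish; shrinking the charts only makes $\|\na\Phi-I\|_{L^\infty}\le\ve$. Two things then go wrong. First, the transported boundary conditions are only approximately satisfied, so the odd reflection of the tangential components of $\curl_y v$ has a jump of size $O(\ve\|\na u\|_{L^\infty})$ across $\{y_3=0\}$ and the reflected field is not in $W^{2,q}$ of the doubled ball with a one-sidedly controlled norm --- the parity argument you give is correct only for a genuinely flat boundary piece. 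Second, and decisively, even if one patches this, the error $\ve\|\na u\|_{L^\infty}$ lands \emph{inside} the bracket that multiplies $\ln(e+\|\na^2 u\|_{L^q})$; since that logarithm is unbounded, the term $C\ve\|\na u\|_{L^\infty}\ln(e+\|\na^2u\|_{L^q})$ cannot be absorbed into the left-hand side no matter how small $\ve$ is. This is exactly the step your phrase ``costs only constants depending on $\partial\Omega$'' hides, and it is why the established proofs work instead with kernel/BMO estimates in which the curvature enters as a smoother perturbation of a Calder\'on--Zygmund kernel rather than as a first-order term hitting $\na u$. A secondary, more repairable issue: the cutoff commutators $u\cdot\na\eta_j$ also sit inside the bracket, and your claimed bound $\|u\|_{L^\infty}\le C\|\na u\|_{L^2}+C\ln(e+\|\na^2u\|_{L^q})$ is unjustified ($\|\na u\|_{L^q}$ is comparable to a power, not a logarithm, of $\|\na^2u\|_{L^q}$); even if it held, it would produce a $(\ln)^2$ term rather than the stated right-hand side.
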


Finally, we have the following local existence of classical solution of \eqref{CMHD}-\eqref{boundary}, which can be proven in a similar manner as that in \cite{fy2009,tg2016}.
\begin{lemma}\label{lem-local}
Let $\Omega$ be as in Theorem \ref{th1} and assume that the initial date $(\rho_0,u_0,H_0)$ satisfies the conditions \eqref{dt1}, \eqref{dt2} and \eqref{dt3}. Then there exist a small time $T_0>0$ and a unique classical solution $(\rho,u,H)$ of the system \eqref{CMHD}-\eqref{boundary} in $\r^3 \times (0,T_0]$, satisfying that $\rho \geq 0$, and that for $\tau \in (0,T_0)$,
\begin{equation}\label{esti-uh-local}
\begin{cases}
(\rho-{\rho}_{\infty},P-{P}_{\infty})\in C([0,T_0);H^2 \cap W^{2,q} ),\\
\nabla u\in C([0,T_0);H^1 )\cap  L^\infty(\tau,T_0;H^2\cap W^{2,q}),\\
u_t\in L^\infty(\tau,T_0; D^1 \cap D^2)\cap H^1(\tau,T_0; D^1),\\
H \in C([0,T_0);H^2)\cap  L^\infty(\tau,T_0; H^4),\\
H_t\in C([0,T_0);L^2)\cap H^1(\tau,T_0; H^1)\cap  L^\infty(\tau,T_0; H^2).	
\end{cases}
\end{equation}
\end{lemma}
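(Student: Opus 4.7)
The plan is to adapt the standard two–step construction used by Fan--Yu~\cite{fy2009} for the vacuum Cauchy/IBVP setting and by Tang--Gao~\cite{tg2016} for the Navier-slip problem in a bounded domain, replacing their elliptic and Stokes estimates by the exterior-domain analogues collected in Lemmas~\ref{lem-vn}--\ref{lem-high} so that everything is compatible with the slip boundary conditions \eqref{navier-b}--\eqref{boundary} and with the unboundedness of $\O$. The scheme has two layers: first treat a positive-density regularization, and then recover the possibly-vacuum case by passing to the limit using the compatibility condition \eqref{dt3}.

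For the regularized problem I would fix a small parameter $\delta>0$, replace $\rho_0$ by $\rho_0^\delta = \rho_0+\delta$ (which is bounded below and still satisfies \eqref{dt1}), smooth the data to obtain $H_0^\delta,\,u_0^\delta$, and build $(\rho,u,H)$ by a linearized iteration. Given $(\rho^{k},u^{k},H^{k})$, define $\rho^{k+1}$ from the linear transport equation $\rho_t^{k+1}+\div(\rho^{k+1}u^{k})=0$ with boundary/far-field data as in \eqref{initial}--\eqref{far-b}, then solve the linear Lam\'e-type system
\begin{equation*}
\rho^{k+1}u_t^{k+1}+\rho^{k+1}(u^{k}\!\cdot\!\nabla)u^{k+1}-\mu\Delta u^{k+1}-(\mu+\lambda)\nabla\div u^{k+1}=-\nabla P(\rho^{k+1})+(\nabla\times H^{k})\times H^{k}
\end{equation*}
under $u^{k+1}\cdot n=0,\ \curl u^{k+1}\times n=0$, and finally the linear parabolic magnetic equation
\begin{equation*}
H_t^{k+1}+\nu\,\nabla\times(\nabla\times H^{k+1})=\nabla\times(u^{k+1}\times H^{k})
\end{equation*}
under $H^{k+1}\cdot n=0,\ \curl H^{k+1}\times n=0$. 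The div-free constraint $\div H^{k+1}=0$ is preserved by applying $\div$ to the $H$-equation (using $\div H_0=0$ and $H\cdot n=0$). Standard transport estimates give $H^{k}$-regularity of $\rho^{k+1}$ on a short interval; the Lam\'e system with slip data yields $u^{k+1}\in L^\infty_tH^2\cap L^2_tH^3$ after using Lemma~\ref{lem-high} to upgrade $\|\div u^{k+1}\|_{W^{k,p}}+\|\curl u^{k+1}\|_{W^{k,p}}$-type bounds to full $\nabla u^{k+1}$-bounds; and the magnetic parabolic problem (treated e.g.\ via Galerkin on a divergence-free, tangential basis) delivers the regularity listed in \eqref{esti-uh-local}. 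A Banach fixed-point argument on a small time interval $[0,T_\delta]$, with contraction measured in a lower-order norm such as $L^\infty(L^2)\cap L^2(H^1)$ for $(\rho-\rho_\infty,u,H)$, produces a unique $\delta$-solution.

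To remove the regularization I would derive the $\delta$-independent bounds. The compatibility condition \eqref{dt3} is used to identify $g=\rho_0^{1/2}\dot u_0$ formally and thereby give $\sqrt{\rho^\delta}u_t^\delta(\cdot,0)\in L^2$ uniformly in $\delta$; multiplying the momentum equation by $u_t^\delta$ and by $\dot u^\delta$, and the magnetic equation by $H_t^\delta$ and by $\Delta H^\delta$, and using Gronwall, furnishes uniform bounds on $\sqrt{\rho^\delta}\,u_t^\delta$, $\nabla^2 u^\delta$, $H_t^\delta$ and $\nabla^2 H^\delta$ on some $[0,T_0]$ with $T_0$ independent of $\delta$. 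The exterior-domain Gagliardo--Nirenberg inequality \eqref{g1}, the div-curl estimates (Lemmas~\ref{lem-vn}--\ref{lem-high}), and the boundary identities underlying Lemma~\ref{lem-be} (as in \cite{cl2021}) handle the extra boundary integrals that appear when integrating by parts against $u_t$ or $\curl u$. Weak-$\ast$ compactness then yields a limit $(\rho,u,H)$ inheriting \eqref{esti-uh-local}, and uniqueness at the nonlinear level follows by a standard energy argument on the difference of two solutions.

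The main obstacle I expect is the interaction of three features simultaneously: the possible vacuum (so one only controls $\sqrt{\rho}\,u_t$, not $u_t$), the slip boundary conditions (so the usual $H^2$-estimate for the Lam\'e system has non-trivial boundary terms that must be absorbed via the identities of Lemma~\ref{lem-be}), and the unboundedness of $\O$ (so Poincar\'e on $u$ is unavailable and one must work throughout with the exterior-domain div-curl inequalities in Lemmas~\ref{lem-vn}--\ref{lem-high}, and with the far-field behavior \eqref{far-b} when $\rho_\infty=0$ via the $L^{3/2}$-condition on $\rho$). The estimate on $\|\nabla u_t\|_{L^2}$ -- needed for $u_t\in L^2(\tau,T_0;D^1)$ -- is the most delicate: it requires differentiating the momentum equation in $t$, testing against $u_t$, and carefully controlling $\int(\nabla P)_t\cdot u_t$ and the magnetic term $((\nabla\times H)\times H)_t\cdot u_t$ using the transport equation for $\rho$ and the evolution of $H$, together with the slip-boundary cancellations. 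Once this bound is in hand, the higher regularity \eqref{esti-uh-local} follows from elliptic regularity applied slice-wise in time.
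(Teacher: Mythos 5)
Your proposal takes essentially the same route as the paper, which does not write out a proof of Lemma \ref{lem-local} at all but simply states that it ``can be proven in a similar manner as that in \cite{fy2009,tg2016}''; the regularization--linearized-iteration--compatibility-condition scheme you describe, adapted to the exterior domain through the div-curl estimates of Lemmas \ref{lem-vn}--\ref{lem-high} and the boundary identities behind Lemma \ref{lem-be}, is precisely the argument those references supply. Your sketch is a correct and reasonably complete account of that standard construction.
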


\subsection{Elementary energy estimates}\label{appendix-b}
In the following, let $T>0$ be a fixed time and $(\rho,u,H)$ be a smooth solution to \eqref{CMHD}-\eqref{boundary} on $\Omega \times (0,T]$. We derive the elementary energy estimates for the system \eqref{CMHD}-\eqref{boundary} and some key a priori estimates which are frequently applied later.
First, we rewrite \eqref{CMHD} in the following form:
\begin{equation}\label{CMHD1}
\begin{cases}
\rho_t+ \mathop{\mathrm{div}}\nolimits(\rho u)=0,\\
\rho u_t+\rho u \cdot \nabla u - (\lambda\! +\! 2\mu)\nabla\div u+\mu\nabla\!\times\!\omega + \nabla(P\!-\!{P}_{\infty})=H \!\cdot\! \nabla H+\nabla \frac{|H|^2}{2},\\
H_t+u \cdot \nabla H- H \cdot \nabla u+ H \div u= -\nu \nabla \times \curl H,
\\
\mathop{\mathrm{div}}\nolimits H=0,
\end{cases}
\end{equation}
where $\omega \triangleq \nabla\times u, \curl H \triangleq \nabla\times H$. In view of \eqref{far-b}, \eqref{navier-b} and \eqref{boundary}, multiplying $\eqref{CMHD1}_1 $ by $G'(\rho)$, $\eqref{CMHD1}_2$ by $u$ and $\eqref{CMHD1}_3$ by $H$ respectively, integrating by parts over $\Omega$, summing them up, we obtain
\begin{align}\label{m2}
\displaystyle  &\left(\int \Big(G(\rho)+\frac{1}{2}\rho |u|^{2}+\frac{1}{2}|H|^{2}\Big)dx\right)_t + (\lambda + 2\mu)\int(\div u)^{2}dx \nonumber \\
 & + \mu\int|\omega|^{2}dx 
 + \nu\int|\curl H|^{2}dx =0,
\end{align}
which, integrated over $(0,T)$, leads to the following elementary energy estimates.
\begin{lemma}\label{lem-basic}
 Let $(\rho,u,H)$ be a smooth solution of \eqref{CMHD}-\eqref{boundary} on $\O \times (0,T]$. Then 
\begin{align}
\displaystyle  &\sup_{0\le t\le T}
\left(\frac{1}{2}\|\sqrt{\rho}u\|_{L^2}^2+\|G(\rho)\|_{L^1}+\frac{1}{2}\|H\|_{L^2}^2\right)
\nonumber \\
 &+ \int_0^{T}(\lambda+2\mu)\|\div u\|_{L^2}^2 +\mu\|\omega\|_{L^2}^2+\nu \|\curl H\|_{L^2}^2)dt \le C_0.\label{basic1}
\end{align}
\end{lemma}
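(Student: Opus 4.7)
The plan is to obtain the bound as a direct time-integration of the pointwise-in-time energy identity \eqref{m2}, with the right-hand side identified as the initial total energy $C_0$ from \eqref{c0}.

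First, I integrate \eqref{m2} over $(0,t)$ for arbitrary $t\in(0,T]$. The integrated identity says that the sum of the kinetic energy, potential energy and magnetic energy at time $t$, plus the time-integral up to $t$ of the viscous and resistive dissipation, equals the corresponding energy functional evaluated at $t=0$, which is precisely $C_0$ by \eqref{c0}. The potential $G(\rho)$ is nonnegative for all $\rho\ge 0$ by strict convexity of $\rho\mapsto a\rho^{\gamma}$ (one checks directly from the definition that $G$ vanishes at $\rho=\rho_\infty$ when $\rho_\infty>0$, and reduces to $a\rho^{\gamma}/(\gamma-1)\ge 0$ when $\rho_\infty=0$), so each summand on the left is nonnegative. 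Taking the supremum over $t\in[0,T]$ in the energy terms and keeping the full integral $\int_0^T$ of the dissipation terms yields the claimed estimate with constant exactly $C_0$.

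The only real content lies in establishing \eqref{m2} itself, where every boundary integral produced by integration by parts must be made to vanish. The viscous stress term tested against $u$ contributes $(\lambda+2\mu)\|\div u\|_{L^2}^2+\mu\|\omega\|_{L^2}^2$ after invoking $u\cdot n=0$ and $\curl u\times n=0$ from \eqref{navier-b}; the pressure term produced by $G'(\rho)\cdot\eqref{CMHD1}_1$ cancels against $u\cdot\nabla(P-P_\infty)$ from the momentum equation without boundary contribution since $u\cdot n=0$; and the Lorentz force $(\nabla\times H)\times H=H\cdot\nabla H-\nabla(|H|^2/2)$ tested against $u$, combined with testing $\eqref{CMHD1}_3$ against $H$, cancels up to the resistive dissipation $\nu\|\curl H\|_{L^2}^2$ because $H\cdot n=0$ and $\curl H\times n=0$ on $\partial\Omega$ by \eqref{boundary}. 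The far-field decay \eqref{far-b} guarantees that no contribution survives at $|x|\to\infty$ on the unbounded exterior domain.

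The main obstacle is therefore not analytic but a bookkeeping one: every surface integral generated by integration by parts, whether on $\partial\Omega$ or on the sphere at infinity, must be matched against the appropriate piece of \eqref{navier-b}, \eqref{boundary}, or \eqref{far-b}. Once \eqref{m2} is in hand, the lemma follows essentially by inspection.
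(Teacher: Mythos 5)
Your argument is exactly the paper's: the authors derive the energy identity \eqref{m2} by the same multiplications ($G'(\rho)$, $u$, $H$) and integrations by parts, using \eqref{far-b}--\eqref{boundary} to kill the boundary terms, and then integrate in time to obtain \eqref{basic1}. The proposal is correct and adds only the (valid) remark that $G(\rho)\ge 0$, so nothing further is needed.
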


\begin{remark}\label{rem:energy}
According to Lemma \ref{lem-vn}, it follows from \eqref{navier-b} and \eqref{boundary} that
\begin{align}
\displaystyle & \|\nabla u\|_{L^2}\leq C(\|\div u\|_{L^2}+\|\omega\|_{L^2}),\label{tdu1}\\
& \|\nabla H\|_{L^2}\leq C\|\curl H\|_{L^2}.\label{tdh1}
\end{align}
Besides, it is easy to check that
\begin{equation}\label{grho}
\begin{cases}
G(\rho)=\frac{1}{\gamma-1}P(\rho),& \text{if} \quad {\rho}_{\infty}=0,\\
\displaystyle  C^{-1}(\rho-{\rho}_{\infty})^{2}\leq G(\rho)\leq C(\rho-{\rho}_{\infty})^{2},& \text{if}\quad {\rho}_{\infty}>0,\, 0\leq \rho \leq 2 \bar{\rho}.
\end{cases}
\end{equation}
Then \eqref{basic1} together with \eqref{tdu1}-\ref{grho}, for $0\leq \rho \leq 2 \bar{\rho}$, yields
\begin{align}\label{basic2}
\displaystyle \sup_{0\le t\le T}\|\rho-{\rho}_{\infty}\|_{L^r}^r+\int_{0}^{T}(\|\nabla u\|_{L^{2}}^{2}+\|\nabla H\|_{L^{2}}^{2})dt\leq CC_{0},
\end{align}
where $r\in [2, \infty)$ if ${\rho}_{\infty}>0$ and $r\in [\gamma, \infty)$ if ${\rho}_{\infty}=0$.
\end{remark}

Similarly to the compressible Navier-Stokes equations, the effective viscous flux
\begin{align}\label{flux}
F\triangleq(\lambda+2\mu)\text{div}u-(P-{P}_{\infty})-\frac{1}{2}|H|^2,
\end{align}
plays an important role in our following analysis. More precisely, we derive some priori estimates for $F$, $\omega$ and $\nabla u$, which will be frequently applied later.
\begin{lemma}\label{lem-f-td}
Let $(\rho,u,H)$ be a smooth solution of \eqref{CMHD}-\eqref{boundary} on $\O \times (0,T]$. Then for any $p\in[2,6],$ there exists a positive constant $C$ depending only on $p$, $\mu$ and $\lambda$ such that
\begin{align}
& \|\nabla F\|_{L^p}\leq C(\|\rho\dot{u}\|_{L^p}+\|H\! \cdot \nabla H\|_{L^p}),\label{tdf1}\\
& \|\nabla\omega\|_{L^p}\leq  C(\|\rho\dot{u}\|_{L^p}+\|H \cdot \nabla H\|_{L^p}+\|\rho\dot{u}\|_{L^2}+\|H \cdot \nabla H\|_{L^2}+\|\nabla u\|_{L^2}),\label{tdxd-u1}\\
& \|F\|_{L^p}\leq C(\|\rho\dot{u}\|_{L^2}+\|H \!\cdot \!\nabla H\|_{L^2})^{\frac{3p-6}{2p}}(\|\nabla u\|_{L^2}+\|P\!-\!{P}_{\infty}\|_{L^2}+\|H\|^2_{L^4})^{\frac{6-p}{2p}},\label{f-lp}\\
& \|\omega\|_{L^p} \leq C(\|\rho\dot{u}\|_{L^2}+\|H \cdot \nabla H\|_{L^2})^{\frac{3p-6}{2p}}\|\nabla u\|_{L^2}^{\frac{6-p}{2p}}+C\|\nabla u\|_{L^2},\label{xdu1}\\
& \|\nabla u\|_{L^p}\leq C(\|\rho\dot{u}\|_{L^2}+\|H \!\cdot \!\nabla H\|_{L^2}+\|P\!-\!{P}_{\infty}\|_{L^6}+\||H|^2\|_{L^6})^{\frac{3p-6}{2p}}\|\nabla u\|_{L^2}^{\frac{6-p}{2p}}+C \|\nabla u\|_{L^2}.\label{tdu2}
\end{align}
\end{lemma}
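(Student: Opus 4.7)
The plan is to isolate the effective viscous flux $F$ by reorganizing the momentum equation, then invoke exterior-domain elliptic regularity for $F$ and $\omega$, and close with Gagliardo--Nirenberg interpolation.

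First, using the identity $\Delta u = \nabla\div u - \nabla\times\omega$ in $\eqref{CMHD1}_2$ and grouping gradient terms yields
\begin{equation*}
\nabla F - \mu\nabla\times\omega = \rho\dot u - H\cdot\nabla H,
\end{equation*}
with $F$ as in \eqref{flux}. Taking divergence, and using $\div(\nabla\times\omega)=0$, gives $\Delta F = \div(\rho\dot u - H\cdot\nabla H)$ together with the Neumann-type boundary condition $\pa_n F = (\rho\dot u - H\cdot\nabla H)\cdot n + \mu(\nabla\times\omega)\cdot n$ on $\pa\Omega$. Invoking the elliptic regularity for the exterior-domain Neumann problem from \cite{NS2004} and absorbing the boundary contribution $\mu(\nabla\times\omega)\cdot n$ via the slip structure $\omega\times n=0$ and the boundary estimates adapted from \cite{cl2021} (as will be codified in Lemma \ref{lem-be}), we obtain \eqref{tdf1}. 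From the same reformulated identity, $\mu\|\curl\omega\|_{L^p} \leq \|\nabla F\|_{L^p} + \|\rho\dot u\|_{L^p} + \|H\cdot\nabla H\|_{L^p}$, which combined with \eqref{tdf1} bounds $\|\curl\omega\|_{L^p}$ by the right-hand side of \eqref{tdf1}. Since $\omega\times n=0$ on $\pa\Omega$ and $\div\omega=0$, Lemma \ref{lem-curl} at $q=2$ yields $\|\nabla\omega\|_{L^2}\leq C(\|\omega\|_{L^2} + \|\curl\omega\|_{L^2}) \leq C(\|\nabla u\|_{L^2} + \|\rho\dot u\|_{L^2} + \|H\cdot\nabla H\|_{L^2})$, and Lemma \ref{lem-high} with $k=0$ lifts this to general $p\in[2,6]$, producing \eqref{tdxd-u1}.

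For \eqref{f-lp} and \eqref{xdu1}, apply the Gagliardo--Nirenberg inequality \eqref{g1} with exponent $p\in[2,6]$:
\begin{equation*}
\|F\|_{L^p}\leq C\|F\|_{L^2}^{(6-p)/(2p)}\|\nabla F\|_{L^2}^{(3p-6)/(2p)},\qquad \|\omega\|_{L^p}\leq C\|\omega\|_{L^2}^{(6-p)/(2p)}\|\nabla\omega\|_{L^2}^{(3p-6)/(2p)}.
\end{equation*}
Directly from \eqref{flux}, $\|F\|_{L^2}\leq C(\|\nabla u\|_{L^2} + \|P-P_\infty\|_{L^2} + \|H\|_{L^4}^2)$, while $\|\omega\|_{L^2}\leq \|\nabla u\|_{L^2}$. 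Inserting \eqref{tdf1} and \eqref{tdxd-u1} at $p=2$ and using subadditivity $(a+b)^\alpha\leq a^\alpha + b^\alpha$ for $\alpha = (3p-6)/(2p)\in[0,1]$ yields \eqref{f-lp} and \eqref{xdu1}. Finally, \eqref{tdu2} follows from Lemma \ref{lem-vn} (since $u\cdot n=0$ on $\pa\Omega$), which gives $\|\nabla u\|_{L^p}\leq C(\|\div u\|_{L^p} + \|\omega\|_{L^p} + \|\nabla u\|_{L^2})$; expressing $(\lambda+2\mu)\div u = F + (P-P_\infty) + \tfrac{1}{2}|H|^2$, H\"older-interpolating $\|P-P_\infty\|_{L^p}$ and $\||H|^2\|_{L^p}$ between $L^2$ and $L^6$, and combining with \eqref{f-lp}--\eqref{xdu1}, delivers the stated estimate.

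The main obstacle is the proof of \eqref{tdf1}: the exterior-domain Neumann problem for $F$ carries inhomogeneous boundary data involving $\mu(\nabla\times\omega)\cdot n$, and controlling it in the correct trace space without losing regularity requires both the slip-condition geometry and the auxiliary boundary estimates referenced above. Everything else in the lemma reduces to standard div-curl estimates (Lemmas \ref{lem-vn}--\ref{lem-high}) and Gagliardo--Nirenberg interpolation (Lemma \ref{lem-gn}).
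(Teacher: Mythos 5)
Your route is essentially the paper's: rewrite the momentum equation as $\rho\dot u=\nabla F-\mu\nabla\times\omega+H\cdot\nabla H$, treat $F$ by the exterior-domain Neumann theory of \cite{NS2004}, handle $\omega$ with the div-curl lemmas, and close with the Gagliardo--Nirenberg inequality \eqref{g1}. The one point to correct is your claim that the boundary datum $\mu(\nabla\times\omega)\cdot n$ is ``the main obstacle'' in proving \eqref{tdf1}: it is not an obstacle at all, and no trace estimate or appeal to Lemma \ref{lem-be} is needed. The paper works with the weak formulation $\int\nabla F\cdot\nabla\eta\,dx=\int(\rho\dot u-H\cdot\nabla H)\cdot\nabla\eta\,dx$ for all test functions $\eta$, which holds because
\begin{equation*}
\int\nabla\times\omega\cdot\nabla\eta\,dx=\int_{\partial\Omega}(\omega\times\nabla\eta)\cdot n\,dS=-\int_{\partial\Omega}(\omega\times n)\cdot\nabla\eta\,dS=0
\end{equation*}
whenever $\omega\times n=0$ on $\partial\Omega$; equivalently, $(\nabla\times\omega)\cdot n$ vanishes pointwise on $\partial\Omega$, since the normal component of a curl involves only tangential derivatives of the tangential components, which are zero under $\omega\times n=0$. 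With the boundary term gone, \cite[Lemma 4.27]{NS2004} applies directly and gives \eqref{tdf1}. The remaining minor deviation --- using Lemma \ref{lem-high} with $k=0$ for \eqref{tdxd-u1} where the paper uses Lemma \ref{lem-curl} together with a Sobolev/interpolation bound on $\|\omega\|_{L^p}$ --- is a harmless variant, and your derivations of \eqref{f-lp}--\eqref{tdu2} match the paper's.
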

\begin{proof}First, \eqref{CMHD1}$_2$ and \eqref{flux} yields that
\begin{equation}\label{2m2}
\displaystyle  \rho\dot{u}=\nabla F-\mu\nabla\times\omega+H \cdot \nabla H.
\end{equation}
By \eqref{navier-b}, one can find that the viscous flux $F$ satisfies
$$\int\nabla F\cdot\nabla\eta dx=\int(\rho\dot{u}-H \cdot \nabla H ) \cdot\nabla\eta dx,\,\,\forall\eta\in C^{\infty}(\r^3),$$
It follows from \cite[Lemma 4.27]{NS2004}, for $q\in (1,\infty),$ that
\begin{align}\label{tdf2}
\displaystyle  \|\nabla F\|_{L^q}\leq C(\|\rho\dot{u}\|_{L^q}
+\|H \cdot \nabla H\|_{L^q}),
\end{align}
which gives  \eqref{tdf1}. Besides, for any integer $k\geq 1$,
\begin{equation}\label{tdfk}
\|\nabla\! F\|_{W^{k,q}}\!\leq\! C(\|\rho\dot{u}\|_{W^{k,q}}+\|H\cdot \nabla H\|_{W^{k,q}}).
\end{equation}
Also, noticing that $\omega\times n=0$ on $\partial\Omega$ and $\mathop{\mathrm{div}}\nolimits \omega=0$, by Lemma \ref{lem-curl} and \eqref{2m2}-\eqref{tdfk}, we get
\begin{align}
\displaystyle  \|\nabla\omega\|_{L^q}\leq C(\|\nabla\times\omega\|_{L^q}+\|\omega\|_{L^q}) \leq C(\|\rho\dot{u}\|_{L^q}+\|H \cdot \nabla H\|_{L^q}+\|\omega\|_{L^q}),\label{tdxd-u2}
\end{align}
and for any integer $k\geq 1$,
\begin{align}\label{tdxdk}
&\quad\|\nabla\omega\|_{W^{k,q}}\leq C(\|\nabla\times\omega\|_{W^{k,q}}+\|\omega\|_{L^2})\nonumber \\
&\leq C(\|\rho\dot{u}\|_{W^{k,q}}+\|H \cdot \nabla H\|_{W^{k,q}}+\|\rho\dot{u}\|_{L^2}+\|H \cdot \nabla H\|_{L^2}+\|\nabla u\|_{L^2}).
\end{align}
By Sobolev's inequality and \eqref{tdxd-u2}, for $p\in[2,6]$, it follows that
\begin{align*}
\|\nabla\omega\|_{L^p}
&\le C(\|\rho\dot{u}\|_{L^p}+\|H \cdot \nabla H\|_{L^p}+\|\rho\dot{u}\|_{L^2}+\|H \cdot \nabla H\|_{L^2}+\|\omega\|_{L^2}),
\end{align*}
which implies \eqref{tdxd-u1}.

Now, in view of the Gagliardo-Nirenberg type inequality \eqref{g1} and \eqref{tdf1}, one can deduce that for $p\in[2,6]$,
\begin{align}\label{f-lp1}
\|F\|_{L^p}\leq& C\|F\|_{L^2}^{\frac{6-p}{2p}}\|\nabla F\|_{L^2}^{\frac{3p-6}{2p}}\nonumber \\
\leq& C(\|\rho\dot{u}\|_{L^2}+\|H\! \cdot\! \nabla H\|_{L^2})^{\frac{3p-6}{2p}}(\|\nabla u\|_{L^2}+\|P\!-\!{P}_{\infty}\|_{L^2}+\|H\|^2_{L^4})^{\frac{6-p}{2p}},
\end{align}
similarly, by \eqref{g1} and \eqref{tdxd-u2},
\begin{align}\label{xdu-lp1}
\|\omega\|_{L^p}\leq& C\|\omega\|_{L^2}^{\frac{6-p}{2p}}\|\nabla \omega\|_{L^2}^{\frac{3p-6}{2p}}\nonumber \\
\leq& C(\|\rho\dot{u}\|_{L^2}+\|H \cdot \nabla H\|_{L^2}+\|\nabla u\|_{L^2})^{\frac{3p-6}{2p}}\|\nabla u\|_{L^2}^{\frac{6-p}{2p}}+C\|\nabla u\|_{L^2},
\end{align}
and we arrive at \eqref{f-lp} and \eqref{xdu1}.
Finally, combining \eqref{g1}, \eqref{tdu1}, \eqref{f-lp} and \eqref{xdu1} yields \eqref{tdu2} holds and  the proof is finished.
\end{proof}

\begin{remark} 
From Lemma \ref{lem-high} and Lemma \ref{lem-f-td}, we can get the higher order estimates of $\nabla u$, which will be devoted to giving higher order estimates in Section \ref{se4}. More precisely, we can get the estimates of $\|\nabla^{2}u\|_{L^p}$ and $\|\nabla^{3}u\|_{L^p}$ for $p\in[2,6]$ by Lemma \ref{lem-high}, \eqref{tdf2} and \eqref{tdfk}, for $p\in[2,6]$,
\begin{align}\label{2tdu}
&\quad\|\nabla^{2}u\|_{L^p}\leq C(\|\div u\|_{W^{1,p}}+\|\omega\|_{W^{1,p}}+ \|\nabla u\|_{L^2})\nonumber \\
&\leq C(\|\rho\dot{u}\|_{L^p}+\|H \cdot \nabla H\|_{L^p}+\|\nabla P\|_{L^p}+\|P-{P}_{\infty}\|_{L^p} +\||H|^2\|_{L^p}+\|\nabla H \cdot H\|_{L^p}\nonumber \\
&\quad +\|\rho\dot{u}\|_{L^2}+\|H \cdot \nabla H\|_{L^2}+\|P\!-\!{P}_{\infty}\|_{L^2}+\|H\|^2_{L^4}+\|\nabla u\|_{L^2}),
\end{align}
and
\begin{align}\label{3tdu}
&\quad\|\nabla^{3}u\|_{L^p}\leq C(\|\div u\|_{W^{2,p}}+\|\omega\|_{W^{2,p}}+ \|\nabla u\|_{L^2})\nonumber \\
&\leq C(\|\rho\dot{u}\|_{W^{1,p}}+\|H \cdot \nabla H\|_{W^{1,p}}+\|P-{P}_{\infty}\|_{W^{2,p}}+\||H|^2\|_{W^{2,p}}\nonumber \\
&\quad +\|\rho\dot{u}\|_{L^2}+\|H \cdot \nabla H\|_{L^2}+\|P\!-\!{P}_{\infty}\|_{L^2}+\|H\|^2_{L^4}+\|\nabla u\|_{L^2}).
\end{align}
Moreover, noticing that $H \cdot n=0, \curl H \times n=0$ on $\partial \Omega$, by Lemma \ref{lem-high}, for any integer $k\geq 1$, $p\in [2,6]$,  we obtain
\begin{align}\label{tdhk}
\|\nabla H\|_{W^{k,p}}&\leq C (\|\curl H\|_{W^{k,p}}+\|\nabla H\|_{L^2}) \nonumber \\
 &\leq C(\|\curl^2 H\|_{W^{k-1,p}}+\|\curl H \|_{L^p}+\|\nabla H\|_{L^2}),
\end{align}
where $\curl^2 H \triangleq \curl\curl  H$ and we have used the fact $\div\curl H=0$.
\end{remark}

To this end, we give the following boundary estimates which will be used frequently later.
\begin{lemma}\label{lem-be}
Assume that $\Omega$ is an exterior domain of the simply connected domain $\mathbb{D}$ in $\r^3$ with smooth boundary $\partial \Omega$ and $\bar{\mathbb{D}}\subset B_R$. Let $u\in D^1$ with $u \cdot n=0$ on $\partial \Omega$. It holds for $f\in D^1$,
\begin{align}
&\int_{\partial \Omega}f u \cdot \nabla u \cdot n dS\leq C \|\nabla f\|_{L^2}\|\nabla u\|_{L^2}^2,\label{b-e1}\\
&\int_{\partial \Omega}u \cdot \nabla f dS\leq C \|\nabla f\|_{L^2}\|\nabla u\|_{L^2}.\label{b-e4}
\end{align}
Moreover, for $\nabla u\in L^2 \cap L^4, f\in D^1 \cap D^2 $,
\begin{align}
&\int_{\partial \Omega}f u \cdot \nabla u \cdot \nabla n \cdot u dS\leq  C \|\nabla f\|_{L^6}\|\nabla u\|_{L^2}^3+C\|\nabla f\|_{L^2}\|\nabla u\|_{L^2}\|\nabla u\|_{L^4}^2.\label{b-e3}
\end{align}
\end{lemma}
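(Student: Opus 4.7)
The unifying strategy is to reduce each boundary integral to a volume integral over the bounded annular region $\Omega\cap B_{R+1}$ and then close with H\"older together with the exterior-domain Sobolev embeddings $\|g\|_{L^6(\Omega)}\le C\|\nabla g\|_{L^2(\Omega)}$ (valid for $g$ with decay at infinity). I fix a smooth extension $\tilde{n}$ of the unit outward normal $n$ to a tubular neighborhood of $\partial\Omega$ satisfying $\tilde{n}\cdot n=1$ on $\partial\Omega$, together with a cutoff $\phi\in C_c^\infty(B_{R+1})$ that is identically $1$ near $\partial\Omega$. The divergence theorem in the exterior setting then gives, for any smooth scalar $g$, the key representation $\int_{\partial\Omega}g\,dS=\int_\Omega\div(g\tilde{n}\phi)\,dx$.

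For \eqref{b-e1}, I would first differentiate the constraint $u\cdot n=0$ tangentially in the direction of the tangential vector $u$ itself to produce the pointwise algebraic identity $u\cdot\nabla u\cdot n=-u\cdot\nabla n\cdot u$ on $\partial\Omega$. Plugging $g=-fu\cdot\nabla\tilde{n}\cdot u$ into the representation and expanding $\div(g\tilde{n}\phi)$ produces volume integrands of three types: $|\nabla f||u|^2$, $|f||u||\nabla u|$, and $|f||u|^2$ on $\Omega\cap B_{R+1}$. Each is dominated by $\|\nabla f\|_{L^2}\|\nabla u\|_{L^2}^2$ via H\"older and the Sobolev bounds (using $\|f\|_{L^p(\Omega\cap B_{R+1})}\le C\|f\|_{L^6(\Omega)}\le C\|\nabla f\|_{L^2}$ and the analogous bound for $u$, for $p\in[1,6]$).

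For \eqref{b-e4}, the volume representation applied to $g=u\cdot\nabla f$ produces one problematic term $u^j(\partial_{jk}f)\tilde{n}^k\phi$ carrying $\nabla^2 f$, whereas the statement provides only $\|\nabla f\|_{L^2}$. The key move is to integrate this term by parts in $x_j$: the boundary residue $\int_{\partial\Omega}(u\cdot n)(\partial_k f)\tilde{n}^k\phi\,dS$ vanishes because $u\cdot n=0$, and the surviving volume terms all carry at most one derivative of each of $f$ and $u$. A routine H\"older argument, using $\|u\|_{L^2(\Omega\cap B_{R+1})}\le C\|\nabla u\|_{L^2}$ for the purely $|u||\nabla f|$-type residues, then yields the claimed bound $C\|\nabla f\|_{L^2}\|\nabla u\|_{L^2}$.

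The third estimate \eqref{b-e3} follows the same two-step recipe applied to $g=fu^i(\partial_i u^j)(\partial_j\tilde{n}^k)u^k$: expanding $\div(g\tilde{n}\phi)$ creates a single second-order contribution $fu^i(\partial_i\partial_m u^j)(\partial_j\tilde{n}^k)u^k\tilde{n}^m\phi$, which I remove by integrating by parts in $x_i$, the boundary contribution once more vanishing thanks to $u\cdot n=0$. What remains splits into the three generic shapes $|\nabla f||u|^2|\nabla u|$, $|f||u||\nabla u|^2$, and $|f||u|^2|\nabla u|$ over $\Omega\cap B_{R+1}$. H\"older with exponents $(6,6,6,2)$ together with $\|u\|_{L^6}\le C\|\nabla u\|_{L^2}$ controls the first by $C\|\nabla f\|_{L^6}\|\nabla u\|_{L^2}^3$; H\"older with $(6,3,4,4)$ combined with $\|f\|_{L^6}\le C\|\nabla f\|_{L^2}$ and $\|u\|_{L^3(\Omega\cap B_{R+1})}\le C\|\nabla u\|_{L^2}$ controls the second by $C\|\nabla f\|_{L^2}\|\nabla u\|_{L^2}\|\nabla u\|_{L^4}^2$; and the lower-order residues of the third type are subsumed by these after another H\"older step. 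The principal obstacle throughout the lemma is the systematic elimination of the second-order derivatives that are forced upon us by the volume representation, and the reason the whole scheme succeeds is precisely that the tangency $u\cdot n=0$ annihilates every stray boundary contribution generated by the extra integration by parts.
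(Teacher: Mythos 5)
Your proposal is correct, and for \eqref{b-e1} it is essentially the paper's own argument: both proofs start from the tangency identity $u\cdot\nabla u\cdot n=-u\cdot\nabla n\cdot u$ on $\partial\Omega$, extend $n$ to a compactly supported field, and convert the boundary integral into a volume integral over the bounded collar $B_{2R}\cap\Omega$ (the paper phrases this as the trace bound $\int_{\partial\Omega}|g|\,dS\le C\|g\|_{W^{1,1}(B_{2R}\cap\Omega)}$, which is your representation $\int_{\partial\Omega}g\,dS=\int\div(g\tilde n\phi)\,dx$ in disguise), closing with the same H\"older/Sobolev bookkeeping. For \eqref{b-e4} and \eqref{b-e3}, however, you take a genuinely different route. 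The paper writes $u=u^{\perp}\times n$ on $\partial\Omega$ with $u^{\perp}=-u\times n$, rewrites the integrand as a scalar triple product $(\cdots)\times u^{\perp}\cdot n$, and applies the divergence theorem together with $\div(\nabla f\times v)=\curl v\cdot\nabla f$, so that the dangerous second derivatives of $f$ (resp.\ of $u$) never appear, being killed algebraically by $\curl\nabla f=0$. You instead allow $\nabla^2 f$ (resp.\ $\nabla^2 u$) to appear in the volume representation and then eliminate it by one further integration by parts whose boundary residue carries the factor $u\cdot n=0$. Both schemes are legitimate and lead to the identical final H\"older estimates; yours is more elementary (no $u^{\perp}$, no vector-calculus identities) and, in \eqref{b-e3}, arguably more transparent, since the paper's displayed expansion of $\div\bigl(f(\nabla u\cdot\nabla n\cdot u)\times u^{\perp}\bigr)$ does not exhibit the $f\,u^{\perp}\cdot\curl(\nabla u\cdot\nabla n\cdot u)$ contribution, whereas your extra integration by parts handles the corresponding second-order term explicitly. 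The paper's device buys the convenience of never invoking second derivatives even formally, so no density argument in $f$ or $u$ is needed; your version should be stated as an a priori estimate for smooth functions followed by approximation.
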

\begin{proof}
We adapt the ideas due to Cai-Li \cite{cl2019}. Since $u\cdot n=0$ on $\partial\Omega$, it follows that
\begin{align}\label{bdd2}
\displaystyle u\cdot\nabla u\cdot n=-u\cdot\nabla n\cdot u,\quad \mbox{ on } \partial \Omega.
\end{align}
Furthermore, in order to get the boundary estimates, one can extend the unit normal vector $n$ to $\Omega$ such that $n\in C^3(\Omega)$ and $n \equiv 0$ outside $B_{2R}$. Denote $\tilde{\Omega}\triangleq B_{2R}\cap \Omega $, it follows that
\begin{align*}
&\int_{\partial \Omega}f u \cdot \nabla u \cdot n dS=-\int_{\partial \Omega}f u \cdot \nabla n \cdot u dS \nonumber \\
\leq& C (\|f u \cdot \nabla n \cdot u\|_{L^1}+\|\nabla(f u \cdot \nabla n \cdot u)\|_{L^1}) \nonumber \\
\leq& C (\|f\|_{L^6(\tilde{\Omega})}\|u\|_{L^6(\tilde{\Omega})}^2+\|\nabla f\|_{L^2(\tilde{\Omega})}\|u\|_{L^6(\tilde{\Omega})}^2+\|f\|_{L^6(\tilde{\Omega})}\|\nabla u\|_{L^2(\tilde{\Omega})}\|u\|_{L^6(\tilde{\Omega})}) \nonumber \\
\leq & C \|\nabla f\|_{L^2}\|\nabla u\|_{L^2}^2,
\end{align*}
which yields \eqref{b-e1}. Similarly,
\begin{align}
&\int_{\partial \Omega}f^2 u \cdot \nabla u \cdot n dS=-\int_{\partial \Omega}f u \cdot \nabla n \cdot u dS \nonumber \\
\leq& C (\|f^2 u \cdot \nabla n \cdot u\|_{L^1}+\|\nabla(f^2 u \cdot \nabla n \cdot u)\|_{L^1}) \nonumber \\
\leq& C (\|f\|_{L^6(\tilde{\Omega})}^2\|u\|_{L^6(\tilde{\Omega})}^2+\|f\|_{L^6(\tilde{\Omega})}\|\nabla f\|_{L^2(\tilde{\Omega})}\|u\|_{L^6(\tilde{\Omega})}^2+\|f\|_{L^6(\tilde{\Omega})}^2\|\nabla u\|_{L^2(\tilde{\Omega})}\|u\|_{L^6(\tilde{\Omega})}) \nonumber \\
\leq & C \|\nabla f\|_{L^2}^2\|\nabla u\|_{L^2}^2.\label{b-e2}
\end{align}
Moreover, if $f\in L^\infty$, it is easy to check that
\begin{align}
&\int_{\partial \Omega}f u \cdot \nabla u \cdot n dS\leq C \|f\|_{L^\infty}\|\nabla u\|_{L^2}^2.\label{b-e5}
\end{align}

Next, $u\cdot n=0$ on $\partial\Omega$ implies
\begin{align}\label{bdd3}
\displaystyle u=u^{\perp}\times n,\quad \mbox{ on } \partial \Omega,
\end{align}
where $u^{\perp}\triangleq -u\times n$ on $\partial\Omega$.  Noticing that $\div (\nabla f \times v)=\nabla \times v \cdot \nabla f$, we have
\begin{align*}
&\int_{\partial \Omega}u \cdot \nabla f dS=\int_{\partial \Omega}u^{\perp}\times n \cdot \nabla f dS=\int_{\partial \Omega} \nabla f\times u^{\perp}\cdot n dS \nonumber \\
=&\int \div(\nabla f\times u^{\perp})dx=\int \nabla\times u^{\perp}\cdot \nabla f dx
\leq C \|\nabla f\|_{L^2}\|\nabla u\|_{L^2},
\end{align*}
which yields \eqref{b-e4}. Similarly,
\begin{align*}
&\int_{\partial \Omega}f u \cdot \nabla u \cdot \nabla n \cdot udS=\int_{\partial \Omega}f (u^{\perp}\times n) \cdot \nabla u \cdot \nabla n \cdot u dS \nonumber \\
=&\int_{\partial \Omega} f(u^{\perp}\times n)\cdot \nabla u \cdot \nabla n \cdot u dS
=\int_{\partial \Omega} f (\nabla u \cdot \nabla n \cdot u) \times u^{\perp} \cdot n  dS \nonumber \\
=&\int \div(f (\nabla u \cdot \nabla n \cdot u) \times u^{\perp})dx \nonumber \\
=&\int ((\nabla u \cdot \nabla n \cdot u) \times u^{\perp})\cdot \nabla f dx+\int f (\nabla\times u^{\perp})\cdot \nabla u \cdot \nabla n \cdot u dx \nonumber \\
\leq& C \|\nabla f\|_{L^6}\|\nabla u\|_{L^2}^3+\|\nabla f\|_{L^2}\|\nabla u\|_{L^2}\|\nabla u\|_{L^4}^2,
\end{align*}
which yields \eqref{b-e3} and finishes the proof of Lemma \ref{lem-be}.
\end{proof}

\section{\label{se3} A priori estimates}
In this section, we will establish the necessary time-independent lower-order estimates and time-dependent higher-order estimates, which can guarantee the local classical solution to be a global classical one. Let $T>0$ be a fixed time and $(\rho,u,H)$ be a smooth solution to \eqref{CMHD}-\eqref{boundary} on $\Omega \times (0,T]$  with the initial data $(\rho_0,u_0,H_0)$ satisfying \eqref{dt1}-\eqref{dt-s}.

\subsection{\label{se3-1} Time-independent lower order estimates}
In this subsection, we will derive the time-independent a priori bounds of the solutions of the problem \eqref{CMHD}-\eqref{boundary}.

Set $\si=\si(t)\triangleq\min\{1,t \},$ we define
\begin{align}
&  A_1(T) \triangleq \sup_{\mathclap{0\le t\le T}}
\sigma \left( \|\nabla u\|_{L^2}^2 \!+\! \|\nabla H\|_{L^2}^2 \right) \!+\! \int_0^{T} \!\! \sigma (\|\sqrt{\rho}\dot{u}\|_{L^2}^2 \!+\! \|\curl^2 H\|_{L^2}^2 \!+\! \|H_t\|_{L^2}^2)dt,\label{As1}\\
& A_2(T)  \triangleq \sup_{\mathclap{0\le t\le T}}\sigma^2(\|\sqrt{\rho}\dot{u}\|_{L^2}^2\!+\!\|\curl^2 H\|_{L^2}^2\!+\!\|H_t\|_{L^2}^2)\!+\!\!\int_0^{T}\!\!\sigma^2(\|\nabla\dot{u}\|_{L^2}^2\!+\!\|\nabla H_t\|_{L^2}^2)dt,\label{As2}\\
& A_3(T) \triangleq\sup_{  0\le t\le T   } \|H\|_{L^3}^3,\label{As3}\\
& A_4(T) \triangleq\sup_{\mathclap{0\le t\le T}}\sigma^{\frac{1}{4}}\left(\|\nabla u\|_{L^2}^2\!+\!\|\nabla H\|_{L^2}^2\right)\!+\!\!\int_0^{T}\!\!\!\sigma^{\frac{1}{4}}(\|\sqrt{\rho}\dot{u}\|_{L^2}^2 \!+\! \|\curl^2 H\|_{L^2}^2 \!+\!\|H_t\|_{L^2}^2)dt,\label{As4} \\
& A_5(T) \triangleq\sup_{  0\le t\le T   }\|\rho^{\frac{1}{3}}u\|_{L^3}^3,\label{As5}
\end{align}
where $\dot{v}=v_t+u \cdot \nabla v$ is the material derivative.

Now we will give the following key a priori estimates in this section, which guarantees the existence of a global classical solution of \eqref{CMHD}--\eqref{boundary}.
\begin{proposition}\label{pr1} Assume that initial data $(\rho_0,u_0,H_0)$ satisfies \eqref{dt1}-\eqref{dt-s}. Let $(\rho,u,H)$  is a smooth solution of \eqref{CMHD}-\eqref{boundary}  on $\Omega\times (0,T] $ satisfying
\begin{equation}\label{key1}
\begin{cases}
\sup\limits_{\Omega\times [0,T]}\rho\le 2\bar{\rho},\quad
A_1(T) + A_2(T) \le 2C_0^{\frac{1}{2}},\\
A_3(T)\leq 2C_0^{\frac{1}{9}},\quad A_4(\sigma(T))+A_5(\sigma(T))\leq 2C_0^{\frac{1}{9}},
\end{cases}
\end{equation}
then there exists a  positive constant  $\ve$ depending only on $\mu$, $\lambda$, $\nu$, $a$, $\ga$, ${\rho}_{\infty}$, $\bar{\rho}$,   $M_1$ and $M_2$ such that
 \begin{equation}\label{key2}
\begin{cases}
\sup\limits_{\Omega\times [0,T]}\rho\le {\frac{7\bar{\rho}}{4}},\quad
A_1(T) + A_2(T) \le C_0^{\frac{1}{2}},\\
A_3(T)\leq C_0^{\frac{1}{9}},\quad A_4(\sigma(T))+A_5(\sigma(T))\leq C_0^{\frac{1}{9}},
\end{cases}
\end{equation}
provided $C_0\le \ve.$
\end{proposition}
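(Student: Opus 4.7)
The proof is a standard bootstrap/continuity argument: assume \eqref{key1} holds and, under the smallness condition $C_0 \leq \varepsilon$, sharpen each inequality to obtain \eqref{key2}, so that by continuity in $T$ the estimates persist globally. I would close the five bounds in the following order, each one feeding the next.

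First I would establish a sequence of weighted energy identities for $(u,H)$. The baseline is Lemma \ref{lem-basic}, which already pins down $\|\nabla u\|_{L^2_tL^2_x}$ and $\|\nabla H\|_{L^2_tL^2_x}$ in terms of $C_0$. To control $A_1$, I would multiply $\eqref{CMHD1}_2$ by $\sigma\dot u$ and $\eqref{CMHD1}_3$ by $\sigma H_t$, integrate by parts, and exploit the slip/perfect-conductor structure to rewrite the viscous and resistive terms as $\frac{d}{dt}(\sigma(\lambda+2\mu)\|\div u\|_{L^2}^2+\sigma\mu\|\omega\|_{L^2}^2+\sigma\nu\|\curl H\|_{L^2}^2)$ plus lower-order remainders. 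The coupling terms $H\cdot\nabla H$ and $(\nabla\times(u\times H))$ give mixed contributions that I would absorb using \eqref{tdf1}--\eqref{tdu2} and the embeddings from Lemma \ref{lem-gn}; boundary remainders produced by the integration by parts (the slip condition is only a partial Dirichlet) are controlled by Lemma \ref{lem-be}, with $f$ identified as $P-P_\infty$, $|H|^2$, or $\div u$ as appropriate. This yields $A_1(T)\le CC_0+C(C_0^{1/2})^{3/2}$, which is $\le C_0^{1/2}/2$ once $C_0$ is small. For $A_2$, I would differentiate the momentum equation in $t$, test against $\sigma^2\dot u$ (and differentiate the magnetic equation, testing with $\sigma^2 H_t$); the extra factor $\sigma$ absorbs the initial singularity coming from the compatibility condition \eqref{dt3}, and the same flux/curl machinery plus Lemma \ref{lem-be} closes the estimate.

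Next, for $A_3$ I would multiply $\eqref{CMHD1}_3$ by $3|H|H$, use $\div H=0$ and the boundary conditions to cast the diffusive term in good form, and reduce to a Grönwall inequality with driving term controlled by $\|\nabla u\|_{L^2}^2\|H\|_{L^3}^3$ plus absorbable pieces; the already-established bounds on $\int\|\nabla u\|_{L^2}^2\,dt$ give $A_3(T)\le CC_0^{1/2}+C_0^{1/3}$, hence $\le C_0^{1/9}$. The short-time estimates $A_4$ and $A_5$ are analogous to $A_1$ but with the lighter weight $\sigma^{1/4}$ (so that one can start from $t=0$ without full $H^2$ regularity); for $A_5$ I would test $\eqref{CMHD1}_2$ against $|u|u$, integrate by parts, and use the fact that the time interval is $[0,\sigma(T)]\subset[0,1]$, together with the $L^r$ control from \eqref{basic2}, to obtain the desired bound by $C_0^{1/9}$.

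The density bound $\rho\le 7\bar\rho/4$ is what the Zlotnik inequality (Lemma \ref{lem-z}) is designed for: along a particle trajectory, $\eqref{CMHD}_1$ gives
\begin{equation*}
\frac{d}{dt}\log\rho=-\div u=-\frac{1}{\lambda+2\mu}\left(F+(P-P_\infty)+\tfrac12|H|^2\right),
\end{equation*}
so setting $y=\log(\rho/\bar\rho)$ we obtain $y'(t)=g(y)+b'(t)$ with $g(y)=-\frac{1}{\lambda+2\mu}(P(\bar\rho e^y)-P_\infty)$ and $b'(t)=-\frac{1}{\lambda+2\mu}(F+\tfrac12|H|^2)$. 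The verification of \eqref{a100} is where the whole $A_i$ machinery pays off: I would split $b(t_2)-b(t_1)$ into a short-time piece $[0,\sigma(T)]$ handled via $A_4,A_5$ and a long-time piece where one uses $\|F\|_{L^\infty}$ bounded via \eqref{f-lp} with $p$ close to $6$, together with $\|\sqrt\rho\dot u\|_{L^2}$ and $\|H\cdot\nabla H\|_{L^2}$ both controlled by $A_1+A_2$. The main obstacle is ensuring the estimate of $\int_{t_1}^{t_2}\|F\|_{L^\infty}\,dt$ grows only like $N_0+N_1(t_2-t_1)$ with $N_1$ strictly smaller than the pressure dissipation constant in $g$; this requires using the $\sigma^2$-weight in $A_2$ to absorb the short-time singularity and the smallness of $C_0$ to make $N_0$ small. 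Once \eqref{a100} is verified, Lemma \ref{lem-z} with $\bar\zeta$ chosen so that $a\bar\rho^\gamma e^{\gamma\bar\zeta}-P_\infty\ge (\lambda+2\mu)N_1$ delivers $\rho\le\frac{7\bar\rho}{4}$. This density step and the coupling bookkeeping for the $H\cdot\nabla H$ terms in the flux estimates are the truly delicate parts; the rest is by now a well-rehearsed exercise in weighted energy estimates adapted to the exterior slip setting via Lemmas \ref{lem-vn}--\ref{lem-high} and \ref{lem-be}.
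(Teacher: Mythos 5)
Your overall strategy coincides with the paper's: improve each bound in \eqref{key1} by weighted energy estimates built on the effective viscous flux, the boundary estimates of Lemma \ref{lem-be}, and Zlotnik's inequality for the density. But there is a genuine gap at the central step, the closure of $A_1(T)+A_2(T)\le C_0^{1/2}$. The energy identities you describe (Lemmas \ref{lem-tdh} and \ref{lem-a0} in the paper) only reduce the problem to
\begin{align*}
A_1(T)+A_2(T)\le C\Big(C_0^{\frac{31}{54}}+\int_0^T\sigma^2\|\nabla u\|_{L^4}^4\,dt\Big),
\end{align*}
and everything hinges on bounding the remaining integral by a positive power of $C_0$ \emph{uniformly in $T$}. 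By \eqref{tdu2}, on the long-time interval $[\sigma(T),T]$ the integrand is dominated (up to terms controlled by $A_1$, $A_2$ and the energy) by $\|P-P_\infty\|_{L^4}^4$, and $\int_{\sigma(T)}^{T}\|P-P_\infty\|_{L^4}^4\,dt$ is \emph{not} controlled by the basic energy \eqref{basic2}, which only gives time-uniform $L^2$-type information on $P-P_\infty$; a priori this integral grows linearly in $T$. The paper resolves this in Lemma \ref{lem-a1a2} by deriving the transport equation \eqref{pp1} for $P-P_\infty$, testing with $3(P-P_\infty)^2$, and exploiting the resulting damping term $\frac{3\gamma-1}{2\mu+\lambda}\|P-P_\infty\|_{L^4}^4$ to obtain \eqref{pp3}. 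Your proposal never mentions this pressure-decay estimate and simply asserts $A_1(T)\le CC_0+C(C_0^{1/2})^{3/2}$; as written, the bootstrap does not close.

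Three smaller points. First, to verify \eqref{a100} you invoke \eqref{f-lp} ``with $p$ close to $6$'' to bound $\|F\|_{L^\infty}$, but an $L^p$ bound with $p<\infty$ does not control $L^\infty$ in three dimensions; the paper uses the exterior-domain embedding \eqref{g2}, i.e. $\|F\|_{L^\infty}\le C\|F\|_{L^6}^{1/2}\|\nabla F\|_{L^6}^{1/2}$, which is what brings $\|\nabla\dot u\|_{L^2}$ and hence the $A_2$-weights into the Zlotnik argument. Second, a direct $\sigma^{1/4}$-weighted energy estimate cannot yield the \emph{small} bound $A_4(\sigma(T))\le C_0^{1/9}$, since the data only satisfy $\|\nabla u_0\|_{L^2}\le M_1$ with $M_1$ of order one; the paper instead interpolates, $\sup\sigma^{1/4}X\le(\sup X)^{3/4}(\sup\sigma X)^{1/4}\le \tilde{C}^{3/4}A_1(T)^{1/4}$, combining the $O(1)$ short-time bound of Lemma \ref{lem-a1} with the smallness of $A_1$. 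Third, your Zlotnik setup with $y=\log(\rho/\bar\rho)$ degenerates on trajectories emanating from vacuum; this is harmless because $\rho\equiv 0$ there by \eqref{c62}, but the paper's choice $D_t\rho=g(\rho)+b'(t)$ with $g(\rho)=-\rho(P-P_\infty)/(2\mu+\lambda)$ avoids the issue entirely.
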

\begin{proof} Proposition \ref{pr1} is a consequence of  Lemmas \ref{lem-h3}, \ref{lem-a3}-\ref{lem-brho} below.
\end{proof}

In what follows, we denote by $C$ or $C_i\ (i=1,2,\cdots)$ the generic positive constants which may depend on $\mu , \lambda,$  $\nu,  \ga ,  a ,$ ${\rho}_{\infty},  \bar{\rho}$,  $M_1$  and $M_2$ but are independent on $T>0$. We also use $C(\alpha)$ to emphasize that $C$ depends on $\alpha$.

\begin{remark}\label{rem-4} Under the assumption \eqref{key1}, it is easy to show that if $C_0\leq 1$, there is a positive constant $C$ such that
\begin{align}\label{basic3}
\displaystyle \int_0^T(\|\nabla u\|_{L^{2}}^{4}+\|\nabla H\|_{L^{2}}^{4})dt\leq CC_{0}^{\frac{2}{9}},
\end{align}
which will be used frequently later.
\end{remark}

The following Lemmas \ref{lem-h3}-\ref{lem-brho} will be proven under the same assumptions as in Proposition \ref{pr1}. First, we give the estimate of $A_3(T)$.
\begin{lemma}\label{lem-h3}
Under the conditions of Proposition \ref{pr1}. Then there is a positive constant
  $\ve_1 $ depending on $\mu,$ $\lambda,$ $\nu,$ $a$, $\gamma$, ${\rho}_{\infty}$, $\bar{\rho}$ and $M_2$  such that
  \begin{align}\label{h-l3}
  \displaystyle A_3(T)\leq C_0^{\frac{1}{9}},
  \end{align}
provided $C_0\leq \ve_1$.
\end{lemma}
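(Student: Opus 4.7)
The plan is to multiply the magnetic equation \eqref{CMHD1}$_3$ by $|H|H$ to obtain a differential inequality for $\|H\|_{L^3}^3$ and integrate in time against the $L^4$-in-time bounds from Remark \ref{rem-4}. First I would take the $L^2$ inner product of \eqref{CMHD1}$_3$ with $|H|H$. Using $u\cdot n=0$ in the convective term and integrating the resistive term by parts in curl form, the boundary terms vanish: for the resistive boundary integral, $\curl H\times n=0$ on $\partial\Omega$ forces $\curl H\parallel n$ pointwise, so $(\curl H\times V)\cdot n=\det(\curl H,V,n)=0$ for any $V$. Rearranging yields
\begin{equation*}
\frac{1}{3}\frac{d}{dt}\|H\|_{L^3}^3 + \nu\int|H||\curl H|^2\,dx = -\frac{2}{3}\int(\div u)|H|^3\,dx + \int|H|(H\otimes H):\nabla u\,dx - \nu\int(\nabla|H|\times H)\cdot\curl H\,dx.
\end{equation*}

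Next I would bound the right-hand side. The first two interior terms are estimated via H\"older and the Sobolev embedding $\|H\|_{L^6}\le C\|\nabla H\|_{L^2}$ from Lemma \ref{lem-gn}, giving $C\|\nabla u\|_{L^2}\|\nabla H\|_{L^2}^3\le C(\|\nabla u\|_{L^2}^4+\|\nabla H\|_{L^2}^4)$ after Young. For the cross term I would use $|\nabla|H||\le|\nabla H|$ and Young to absorb half the dissipation into the left-hand side, leaving a residual $C\nu\int|H||\nabla H|^2\,dx$. This residual I would control via H\"older ($\le\|H\|_{L^6}\|\nabla H\|_{L^{12/5}}^2$), Gagliardo--Nirenberg interpolation ($\|\nabla H\|_{L^{12/5}}^2\le C\|\nabla H\|_{L^2}^{3/2}\|\nabla^2 H\|_{L^2}^{1/2}$), and Lemma \ref{lem-high} to exchange $\|\nabla^2 H\|_{L^2}$ for $\|\curl^2 H\|_{L^2}+\|\nabla H\|_{L^2}$; after a further Young step it contributes terms of the form $C(\|\nabla u\|_{L^2}^4+\|\nabla H\|_{L^2}^4)$ plus a small multiple of the weighted dissipation controlled by $A_1(T)$.

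Integrating the resulting inequality over $(0,T)$ and using Remark \ref{rem-4} ($\int_0^T(\|\nabla u\|_{L^2}^4+\|\nabla H\|_{L^2}^4)\,dt\le CC_0^{2/9}$) together with the Gagliardo--Nirenberg bound on the initial data $\|H_0\|_{L^3}^3\le C\|H_0\|_{L^2}^{3/2}\|\nabla H_0\|_{L^2}^{3/2}\le CM_2^{3/2}C_0^{3/4}$ yields $\|H(\cdot,t)\|_{L^3}^3\le CM_2^{3/2}C_0^{3/4}+CC_0^{2/9}$. Since both exponents $3/4$ and $2/9$ exceed $1/9$, choosing $\varepsilon_1$ small enough (depending on $\mu,\lambda,\nu,a,\gamma,\rho_\infty,\bar\rho,M_2$) gives $A_3(T)\le C_0^{1/9}$ as claimed.

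The main obstacle is the cross term $\nu\int(\nabla|H|\times H)\cdot\curl H\,dx$: the weighted dissipation $\int|H||\curl H|^2\,dx$ does not pointwise dominate $\int|H||\nabla H|^2\,dx$ (the Bochner identity $|\nabla H|^2=|\curl H|^2+|\div H|^2$ fails pointwise and only holds integrally up to boundary curvature corrections), so closing the estimate requires a careful interpolation between $\|\nabla H\|_{L^2}$ and $\|\curl^2 H\|_{L^2}$ via Lemma \ref{lem-high} together with the weighted $A_1$-control of $\sigma\|\curl^2 H\|_{L^2}^2$, arranged so that all remaining time integrals stay at order $C_0^{2/9}$.
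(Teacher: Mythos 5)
Your proposal is correct and follows essentially the same route as the paper: multiply \eqref{CMHD1}$_3$ by $|H|H$, kill the boundary terms using $u\cdot n=0$ and $\curl H\times n=0$, reduce the resistive contribution to $C\|\nabla H\|_{L^2}^{5/2}\|\curl^2 H\|_{L^2}^{1/2}$ plus lower-order terms (the paper reaches the same quantity via $\|H\|_{L^\infty}\|\nabla H\|_{L^2}^2$ and \eqref{g2} rather than your $L^6\times L^{12/5}\times L^{12/5}$ H\"older/interpolation, and does not bother keeping the weighted dissipation on the left), and close with \eqref{basic3} and $\|H_0\|_{L^3}^3\le C(M_2)C_0^{3/4}$. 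The only point needing care is the time integral of $\|\nabla H\|_{L^2}^{5/2}\|\curl^2 H\|_{L^2}^{1/2}$ near $t=0$: the $\sigma$-weighted control in $A_1$ that you cite leaves a non-integrable power of $\sigma$ on $(0,\sigma(T))$, and one must instead use the $\sigma^{1/4}$-weighted bounds $A_4(\sigma(T))\le 2C_0^{1/9}$ from \eqref{key1} there (reserving $A_1$ for $(\sigma(T),T)$), which gives a total of order $C_0^{1/6}$ rather than your claimed $C_0^{2/9}$ --- still strictly larger than $C_0^{1/9}$ for small $C_0$, so the conclusion is unaffected.
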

\begin{proof}
Multiplying \eqref{CMHD1}$_3$ by $3|H|H$ and integrating by parts over $\Omega$, we have
\begin{align}\label{h3-1}
\frac{d}{dt}\|H\|^3_{L^3}=&-3\nu \int \curl H \cdot \curl (|H|H) dx
+3 \int |H|H \cdot \nabla u \cdot H dx -2 \int |H|^3 \div u dx \nonumber \\
\leq & C\|H\|_{L^\infty} \|\nabla H\|_{L^2}^2+ C \|\nabla u\|_{L^2}\|H\|_{L^6}^3 \nonumber \\
\leq & C \|\nabla H\|^{5/2}_{L^2}\|\curl^2  H\|_{L^2}^{\frac{1}{2}}+C \|\nabla H\|^{2}_{L^2}+ C \|\nabla H\|^{4}_{L^2}+C \|\nabla u\|_{L^2}^4,
 \end{align}
which together with \eqref{key1} and \eqref{basic3} indicates that
\begin{align}\label{h3-2}
&\displaystyle \sup_{0\le t\le T}\|H\|_{L^{3}}^{3} \nonumber \\
\leq & \|H_0\|_{L^{3}}^{3}+C \int_0^T \|\nabla H\|^{5/2}_{L^2}\|\curl^2  H\|_{L^2}^{\frac{1}{2}}dt +C C_0+ C C_0^{\frac{2}{9}} \nonumber \\
\leq & \|H_0\|_{L^{3}}^{3}+C \int_0^{\sigma(T)}
\left(\sigma^{\frac{1}{4}}\|\nabla H\|_{L^2}^2\right)^{5/4}\left(\sigma^{\frac{1}{4}}\|\curl^2  H\|_{L^2}^2\right)^{1/4}\sigma^{-\frac{3}{8}} dt \nonumber \\
&+C\|\nabla H\|_{L^2} \int_{\sigma(T)}^T \left(\|\nabla H\|_{L^2}^2\right)^{3/4}\left(\sigma\|\curl^2  H\|_{L^2}^2\right)^{1/4}dt +C C_0+ C C_0^{\frac{2}{9}} \nonumber \\
\leq & C_1C_0^{\frac{1}{6}}
\end{align}
where in the last inequality we have used the simple fact
\begin{align}\label{h3-3}
\displaystyle  \|H_0\|_{L^{3}}^{3} \leq C \|H_0\|_{L^{2}}^{\frac{3}{2}}\|\nabla H_0\|_{L^2}^{\frac{3}{2}} \leq C(M_2)C_0^{\frac{3}{4}}.
\end{align}
Thus it follows from \eqref{h3-3} that \eqref{h-l3} holds provided
$C_0 \leq \ve_1 \triangleq \min \{1, C_1^{-18}\}.$
The proof of Lemma \ref{lem-h3} is completed.
\end{proof}

\begin{lemma}\label{lem-tdh}
Under the conditions of Proposition \ref{pr1}. It holds
\begin{align}\label{a01}
 \displaystyle  A_1(T) \le  C C_0 + C\int_0^{T}\sigma \|\nabla u\|_{L^3}^3dt.
\end{align}
\end{lemma}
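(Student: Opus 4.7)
The plan is to derive two energy identities by testing the momentum equation with the material derivative $\dot u$ and the magnetic equation with $H_t$, multiply each by the time-weight $\sigma(t)$, integrate over $(0,T)$, and carefully absorb the coupling and boundary contributions using Lemmas \ref{lem-gn}, \ref{lem-be}, \ref{lem-f-td} together with the a priori hypotheses \eqref{key1} and the basic estimate \eqref{basic1}.

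\textbf{Step 1 (momentum).} Starting from \eqref{2m2}, I would test the identity $\rho\dot u = \nabla F - \mu\nabla\times\omega + H\cdot\nabla H$ with $\dot u = u_t + u\cdot\nabla u$, rewrite $\nabla F = (\lambda+2\mu)\nabla\div u - \nabla(P-P_\infty) - \nabla(|H|^2/2)$, and integrate by parts. This should produce an identity of the schematic form
\begin{equation*}
\frac{1}{2}\frac{d}{dt}\bigl[(\lambda+2\mu)\|\div u\|_{L^2}^2 + \mu\|\omega\|_{L^2}^2\bigr] + \int\rho|\dot u|^2 dx = \frac{d}{dt}\mathcal{P}(t) + \mathcal{J} + \mathcal{B},
\end{equation*}
where $\mathcal{P}(t)$ collects the time-derivative pieces of $\int(P-P_\infty)\div u\,dx$ and $\int\frac{|H|^2}{2}\div u\,dx$ (pushed into the Lyapunov functional by using $\dot P = -\gamma P\div u$ and $\eqref{CMHD1}_3$), $\mathcal{J}$ gathers convection-type remainders like $\int\div u[(\div u)^2 - \nabla u:(\nabla u)^T]\,dx$ and $\int\omega\cdot(\nabla u\star\nabla u)\,dx$ (both bounded by $C\|\nabla u\|_{L^3}^3$), and $\mathcal{B}$ consists of the boundary pieces $\int_{\partial\Omega}(\lambda+2\mu)\div u(\dot u\cdot n)dS$, $\int_{\partial\Omega}(P-P_\infty)(\dot u\cdot n)dS$ and $\int_{\partial\Omega}\frac{|H|^2}{2}(\dot u\cdot n)dS$. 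Using $\dot u\cdot n = -u\cdot\nabla n\cdot u$ on $\partial\Omega$ (from $u\cdot n = u_t\cdot n = 0$), Lemma \ref{lem-be} converts each of these boundary integrals into bulk quantities of the form $\|\nabla f\|_{L^2}\|\nabla u\|_{L^2}^2$ or $\|\nabla f\|_{L^6}\|\nabla u\|_{L^2}^3 + \|\nabla f\|_{L^2}\|\nabla u\|_{L^2}\|\nabla u\|_{L^4}^2$, which interpolate into $\|\nabla u\|_{L^2}^2$ and $\|\nabla u\|_{L^3}^3$ modulo small multiples. The boundary term from $\nabla\times\omega$ vanishes because $\omega\times n = 0$ on $\partial\Omega$.

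\textbf{Step 2 (magnetic) and assembly.} Testing $\eqref{CMHD1}_3$ with $H_t$ and using \eqref{boundary} to kill the $\curl\curl H$ boundary term (since $\curl H\times n = 0$), I would obtain
\begin{equation*}
\frac{\nu}{2}\frac{d}{dt}\|\curl H\|_{L^2}^2 + \int|H_t|^2 dx = -\int(u\cdot\nabla H - H\cdot\nabla u + H\div u)\cdot H_t\,dx,
\end{equation*}
whose right side is controlled by $\frac12\|H_t\|_{L^2}^2 + C(\|u\|_{L^6}^2\|\nabla H\|_{L^3}^2 + \|H\|_{L^\infty}^2\|\nabla u\|_{L^2}^2)$ via Cauchy–Schwarz and Lemma \ref{lem-gn}, with the small factor $\|H\|_{L^\infty}$ coming from \eqref{g2} together with $A_3(T)\le 2C_0^{1/9}$. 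Rearranging $\eqref{CMHD1}_3$ as $\nu\curl^2 H = -H_t + H\cdot\nabla u - u\cdot\nabla H - H\div u$ then gives the pointwise $L^2$ control $\|\curl^2 H\|_{L^2}^2 \le C\|H_t\|_{L^2}^2 + (\text{small})$, so bounding $\int\sigma\|H_t\|^2$ automatically bounds $\int\sigma\|\curl^2 H\|^2$. Multiplying both identities by $\sigma(t)$ and integrating in time, the $\sigma'(t) = \mathbf{1}_{\{t<1\}}$ contributions are absorbed by $CC_0$ thanks to the basic energy estimate \eqref{basic1}, while the lower-order pressure/magnetic pieces of $\mathcal{P}$ and the Lyapunov functional are absorbed into $\|\nabla u\|_{L^2}^2/2$ via Young's inequality together with $\|P-P_\infty\|_{L^2}^2 + \|H\|_{L^4}^4 \le CC_0$ (from \eqref{basic2} and $A_3(T)$). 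What remains on the right side is exactly $CC_0 + C\int_0^T\sigma\|\nabla u\|_{L^3}^3 dt$.

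\textbf{Main obstacle.} The principal difficulty is the clean treatment of the boundary integrals produced by the slip condition in the momentum test—especially those involving $(P-P_\infty)$ and $|H|^2$ against $\dot u\cdot n$—and the magnetic-momentum cross coupling produced by $H\cdot\nabla H$ in the momentum and $H\cdot\nabla u$ in the magnetic equation. The former are precisely the scenarios for which \eqref{b-e1} and \eqref{b-e3} are tailored; the latter must be absorbed by exploiting the smallness of $\|H\|_{L^3}, \|H\|_{L^\infty}$ coming from Lemma \ref{lem-h3} and interpolating $\|\nabla H\|_{L^3}$ between $\|\nabla H\|_{L^2}$ and $\|\curl^2 H\|_{L^2}$ via Lemma \ref{lem-curl}–\ref{lem-high}. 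Once these two families of terms are shown to fit under $CC_0$ plus a small multiple of $\int\sigma\|\nabla u\|_{L^3}^3 dt$ and a fraction of the good quantities on the left, the bound \eqref{a01} follows.
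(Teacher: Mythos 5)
Your proposal follows essentially the same route as the paper's proof: the paper also tests the momentum equation with $\sigma\dot u$ (decomposing into pressure, $\div$, $\curl$ and Lorentz-force pieces $I_1,\dots,I_4$), handles the slip-boundary integrals via Lemma \ref{lem-be} after rewriting $\div u$ through the effective viscous flux $F$, bounds the convection remainders by $\|\nabla u\|_{L^3}^3$, and separately derives the $\sigma$-weighted dissipation bound $\int_0^T\sigma(\|\curl^2 H\|_{L^2}^2+\|H_t\|_{L^2}^2)dt\le CC_0$ from $\eqref{CMHD1}_3$ using the smallness of $\|H\|_{L^3}$ from $A_3(T)$. The only differences are cosmetic (order of the two steps, working from \eqref{2m2} rather than $\eqref{CMHD1}_2$ directly), so the argument is correct and matches the paper.
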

\begin{proof}
The proof of this lemma proceeds in two steps. First, let us first consider the short-time estimate of $H$. Multiplying \eqref{CMHD1}$_3$ by $H$ and integrating by parts over $\Omega$, by \eqref{boundary}, \eqref{g1}, \eqref{tdh1}, we have
\begin{align*}
\displaystyle \left(\frac{1}{2}\|H\|_{L^2}^2\right)_t+\nu \|\curl H\|_{L^2}^2\leq
\|\nabla u\|_{L^2}\|H\|^2_{L^4}\leq \frac{\nu}{2}\|\curl H\|_{L^2}^2+C\|\nabla u\|^4_{L^2}\|H\|^2_{L^2},
\end{align*}
which together with \eqref{tdh1}, \eqref{basic3} and Gronwall inequality gives
\begin{align}\label{tdh-2}
\displaystyle \sup_{0\le t\le T}\|H\|_{L^2}^2+\int_0^T\|\nabla H\|_{L^2}^2dt \leq
C\|H_0\|^2_{L^2}.
\end{align}
By Lemma \ref{lem-f-td}, one easily deduces from \eqref{CMHD1}$_3$ and \eqref{boundary} that
\begin{align}\label{tdh-3}
\displaystyle & \left(\frac{\nu}{2}\|\curl H\|_{L^2}^2\right)_t+\nu^2 \|\curl^2  H\|_{L^2}^2+\|H_t\|^2_{L^2}
\leq C(\|\nabla u\|^2_{L^2}+\|\nabla u\|^4_{L^2})\|\nabla H\|_{L^2}^2,
\end{align}
using \eqref{tdh1}, \eqref{basic3} and Gronwall inequality, we get
\begin{align}\label{tdh-4}
  \displaystyle \sup_{0\le t\le T  }\|\nabla H\|_{L^2}^2+\int_0^{T}
 \left(\|\curl^2 H\|_{L^2}^2+\|H_t\|_{L^2}^2\right)dt \leq C\|\nabla H_0\|^2_{L^2}.
  \end{align}
Besides, multiplying \eqref{tdh-3} by $\sigma$ and integrating it over $(0,T)$, by \eqref{basic3} and \eqref{tdh-2}, we obtain
 \begin{align}\label{tdh}
  \displaystyle \sup_{0\le t\le T  }\left(\sigma\|\nabla H\|_{L^2}^2\right)+\int_0^{T} \sigma
 \left(\|\curl^2 H\|_{L^2}^2+\|H_t\|_{L^2}^2\right)dt \leq CC_0,
  \end{align}

Next, it remains to show the estimate of $u$ and we follow the same plan as used in \cite{chs2020,lxz2013}. We focus on the boundary terms and give the sketch of the proof.
Let $m\ge 0$ be a real number which will be determined later. Multiplying $\eqref{CMHD1}_2 $ by $\sigma^m \dot{u}$ and then integrating the reslting equality over
$\Omega$ lead to
\begin{align}\label{I0}
\int \sigma^m \rho|\dot{u}|^2dx &
= -\int\sigma^m \dot{u}\cdot\nabla Pdx + (\lambda+2\mu)\int\sigma^m \nabla\div u\cdot\dot{u}dx \nonumber\\
&\quad - \mu\int\sigma^m \nabla\times\omega\cdot\dot{u}dx
+\int\sigma^m (H \cdot \nabla H- \nabla|H|^2/2)\cdot\dot{u}dx \nonumber\\
& \triangleq I_1+I_2+I_3+I_4.
\end{align}
First, by $\eqref{CMHD}_1$ and Lemma \ref{lem-f-td}, a direct calculation gives
\begin{align}\label{I1}
I_1 
= & -\int\sigma^m u_{t}\cdot\nabla(P-{P}_{\infty})dx
- \int\sigma^m u\cdot\nabla u\cdot\nabla Pdx \nonumber\\
= & \left(\int\sigma^m(P-{P}_{\infty})\,\div u\, dx\right)_{t} - m\sigma^{m-1}\sigma'\int(P-{P}_{\infty})\,\div u\,dx \nonumber\\
&+ \int\sigma^{m}P\nabla u:\nabla u dx + (\gamma-1)\int\sigma^{m}P(\div u)^{2}dx - \int_{\partial\Omega}\sigma^{m}Pu\cdot\nabla u\cdot n ds\nonumber\\
\leq &\left(\int\sigma^m(P-{P}_{\infty})\,\div u\, dx\right)_{t} + C \|\nabla u \|_{L^2}^2+ Cm\sigma^{m-1}\sigma'C_0,
\end{align}
where we have used \eqref{b-e5} with $f=P$ to deal with the boundary term in the second equality.
Similarly, by \eqref{bdd2}, it indicates that
\begin{align}\label{I20}
I_2 
& = (\lambda+2\mu)\int_{\partial\Omega}\sigma^m\div u\,(\dot{u}\cdot n)ds - (\lambda+2\mu)\int\sigma^m\div u\,\div \dot{u}dx  \nonumber\\
& = (\lambda+2\mu)\int_{\partial\Omega}\sigma^m\div u\,(u\cdot\nabla u\cdot n)ds - \frac{\lambda+2\mu}{2}\left(\int\sigma^{m}(\div u)^{2}dx\right)_{t} \nonumber\\
&\quad +\frac{\lambda+2\mu}{2}\int\sigma^{m}(\div u)^{3}dx- (\lambda+2\mu)\int\sigma^m\div u\,\nabla u:\nabla udx  \nonumber\\
&\quad + \frac{m(\lambda+2\mu)}{2}\sigma^{m-1}\sigma'\int(\div u)^{2}dx
\end{align}
For the boundary term on the righthand side of \eqref{I20}, applying Lemma \ref{lem-be} and \eqref{tdu1}, we obtain
\begin{align}\label{I21}
& (\lambda+2\mu)\int_{\partial\Omega}\sigma^m\div u\,(u\cdot\nabla u\cdot n)ds \nonumber \\
= & \int_{\partial\Omega}\sigma^m Fu\cdot\nabla u\cdot nds+\int_{\partial\Omega}\sigma^m(P-{P}_{\infty})u\cdot\nabla u\cdot nds+\int_{\partial\Omega}\sigma^m \frac{|H|^2}{2}u\cdot\nabla u\cdot nds \nonumber \\
\leq & C\sigma^m(\|\nabla F\|_{L^{2}}\|\nabla u\|_{L^{2}}^2+\|\nabla u\|_{L^{2}}^{2}+\|\nabla H\|_{L^{2}}^2\|\nabla u\|_{L^{2}}^{2})\nonumber \\
\leq & C\sigma^m(\|\rho\dot{u}\|_{L^2}+\|H \cdot \nabla H\|_{L^2})\|\nabla u\|_{L^{2}}^2+C\sigma^m\|\nabla u\|_{L^{2}}^2+C\sigma^m\|\nabla H\|_{L^{2}}^2\|\nabla u\|_{L^{2}}^2 \nonumber \\
\leq & \frac{1}{2}\sigma^m\|\sqrt{\rho}\dot{u}\|_{L^2}^2+C \sigma^m\|\curl^2  H\|_{L^2}^2+C\sigma^m(\|\nabla u\|_{L^{2}}^2+\|\nabla H\|_{L^{2}}^2)(\|\nabla u\|_{L^{2}}^2+1),
\end{align}
where we have used
\begin{align}\label{h-tdh}
\displaystyle \|H \cdot \nabla H\|_{L^2}\leq C \|H\|_{L^3}\|\nabla H\|_{L^6} \leq C C_0^{\frac{1}{27}}(\|\curl^2  H\|_{L^2}+\|\nabla H\|_{L^2}).
\end{align}
Therefore,
\begin{align}\label{I2}
\displaystyle  I_2 \leq &  - \frac{\lambda+2\mu}{2}\left(\int\sigma^{m}(\div u)^{2}dx\right)_t+C\sigma^{m}\|\nabla u\|_{L^{3}}^{3}+\frac{1}{2}\sigma^m\|\sqrt{\rho}\dot{u}\|_{L^2}^2+C \sigma^m\|\curl^2  H\|_{L^2}^2\nonumber \\
& +C\sigma^m(\|\nabla u\|_{L^{2}}^2+\|\nabla H\|_{L^{2}}^2)\|\nabla u\|_{L^{2}}^2+C(\|\nabla u\|_{L^{2}}^{2}+\|\nabla H\|_{L^{2}}^{2}).
\end{align}
Next, by \eqref{navier-b}, a straightforward computation shows that
\begin{align}\label{I3}
\displaystyle  I_3 
& = -\frac{\mu}{2}\left(\int\sigma^{m}|\omega|^{2}dx\right)_t + \frac{\mu m}{2}\sigma^{m-1}\sigma'\int|\omega|^{2}dx   \nonumber\\
& \quad - \mu\int\sigma^{m}(\nabla u^{i}\times\nabla_i u)\cdot\omega dx + \frac{\mu}{2}\int\sigma^{m}|\omega|^{2}\,\div udx\nonumber\\
& \leq -\frac{\mu}{2}\left(\int\sigma^{m}|\omega|^{2}dx\right)_t + Cm\sigma^{m-1}\sigma'\|\nabla u\|_{L^{2}}^{2} + C\sigma^{m}\|\nabla u\|_{L^{3}}^{3}.
\end{align}
Finally, by \eqref{boundary} and \eqref{tdhk}, we have
\begin{align}\label{I4}
I_4 
 = & \left(\int\sigma^m (H \cdot \nabla H- \nabla|H|^2/2)\cdot u dx\right)_t-m\sigma^{m-1}\sigma'\int (H \cdot \nabla H- \nabla|H|^2/2)\cdot u dx \nonumber \\
      &+\int\sigma^m \big((H \otimes H)_t:\nabla u- (|H|^2/2)_t\div u\big)dx +\int\sigma^m (H \cdot \nabla H- \nabla|H|^2/2)\cdot u \cdot \nabla u dx \nonumber \\
 \leq & \left(\int\sigma^m (H \cdot \nabla H- \nabla|H|^2/2)\cdot u dx\right)_t+C(\|\nabla H\|_{L^2}^2+\|\nabla u\|_{L^2}^2) \nonumber \\
      &+C\sigma^m(\|H_t\|_{L^2}^{2}+\|\curl^2  H\|_{L^2}^2)+C\sigma^m \|\nabla u\|_{L^3}^3+C\sigma^m\|\nabla H\|_{L^2}^2\|\nabla u\|_{L^2}^2 \nonumber \\
      &+ C\sigma^m\|\nabla H\|_{L^2}^2(\|\nabla H\|_{L^2}^4+\|\nabla u\|_{L^2}^4),
\end{align}
Making use of the results \eqref{I1}, \eqref{I2},\eqref{I3} and \eqref{I4}, it follows from $\eqref{I0}$ that
\begin{align}\label{I01}
&\left((\lambda+2\mu)\int\sigma^{m}(\div u)^{2}dx+\mu\int\sigma^{m}|\omega|^{2}dx\right)_{t}+\int\sigma^{m}\rho|\dot{u}|^{2}dx \nonumber \\
\leq &\left(\int\sigma^{m}(P-{P}_{\infty})\,\div udx\right)_{t}+\left(\int\sigma^m (H \cdot \nabla H- \nabla|H|^2/2)\cdot u dx\right)_t \nonumber \\
     &+Cm\sigma^{m-1}\sigma'C_0 +C(\|\nabla H\|_{L^2}^2+\|\nabla u\|_{L^2}^2)(\|\nabla u\|_{L^{2}}^2+1)+C\sigma^{m}\|\nabla u\|_{L^{3}}^{3} \nonumber \\
     &+C\sigma^m(\|H_t\|_{L^2}^{2}+\|\curl^2  H\|_{L^2}^2)+ C\sigma^m\|\nabla H\|_{L^2}^2(\|\nabla H\|_{L^2}^4+\|\nabla u\|_{L^2}^4),
\end{align}
integrating over $(0,T]$, by \eqref{tdu1}, Lemma \ref{lem-basic} and Young's inequality, we conclude that for any $m>0$,
\begin{align}\label{I02}
\displaystyle  &\sigma^{m}\|\nabla u\|_{L^{2}}^{2}+\int_0^T\int\sigma^{m}\rho|\dot{u}|^{2}dxdt \nonumber \\
\leq & C\int_0^T \sigma^m(\|H_t\|_{L^2}^{2}+\|\curl^2  H\|_{L^2}^2) dt
+C\int_0^T\sigma^{m}\|\nabla H\|_{L^2}^2(\|\nabla H\|_{L^2}^4+\|\nabla u\|_{L^2}^4)dt \nonumber \\
     &+CC_{0}+C\int_0^T\sigma^{m}\|\nabla u\|_{L^{2}}^2(\|\nabla u\|_{L^{2}}^2+\|\nabla H\|_{L^{2}}^2)dt+C\int_0^T\sigma^{m}\|\nabla u\|_{L^{3}}^{3}dt.
\end{align}
Choose $m=1,$ together with \eqref{key1}, \eqref{basic3} and \eqref{tdh}, we obtain $\eqref{a01}$. The proof of Lemma \ref{lem-tdh} is completed.
\end{proof}

\begin{lemma}\label{lem-a0}
Under the conditions of Proposition \ref{pr1}. Then there is a positive constant
  $\ve_2 $ depending only  on $\mu$, $\lambda$, $\nu$, $a$, $\gamma$, ${\rho}_{\infty}$ and $\bar{\rho}$ such that if $C_0\leq \ve_2$,
 \begin{align}\label{a02}
 \displaystyle  A_2(T)
    \le   C C_0^{\frac{31}{54}} + CA_1(T)  + C\int_0^{T}\sigma^2 \|\nabla u\|_{L^4}^4 dt.
 \end{align}
\end{lemma}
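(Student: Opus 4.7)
\medskip

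\noindent\textbf{Proof proposal for Lemma \ref{lem-a0}.}

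The plan is to obtain the three ingredients of $A_2(T)$ --- namely $\sigma^2\|\sqrt{\rho}\dot u\|_{L^2}^2$, $\sigma^2\|H_t\|_{L^2}^2$ and $\sigma^2\|\curl^2 H\|_{L^2}^2$ together with the time-integral norms of $\nabla\dot u$ and $\nabla H_t$ --- through two separate energy-type estimates (one for $\dot u$, one for $H_t$), then close them simultaneously.

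\emph{Step 1: material derivative estimate for $u$.} Apply the operator $\partial_t+\mathrm{div}(u\,\cdot)$ to $\eqref{CMHD1}_2$ to derive the transport-form equation
\[
(\rho\dot u^j)_t+\mathrm{div}(\rho u\dot u^j)=\mu\bigl[\Delta u^j_t+\mathrm{div}(u\Delta u^j)\bigr]+(\mu+\lambda)\bigl[\partial_j(\mathrm{div}u)_t+\mathrm{div}(u\,\partial_j\mathrm{div}u)\bigr]-\bigl[\partial_j P_t+\mathrm{div}(u\,\partial_j P)\bigr]+\bigl[(H\cdot\nabla H-\tfrac12\nabla|H|^2)^j\bigr]_t+\mathrm{div}\bigl(u(H\cdot\nabla H-\tfrac12\nabla|H|^2)^j\bigr).
\]
Multiply by $\sigma^2\dot u^j$, integrate over $\Omega$, and integrate by parts. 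The LHS produces $\frac12(\sigma^2\|\sqrt\rho\dot u\|_{L^2}^2)_t-\sigma\sigma'\|\sqrt\rho\dot u\|_{L^2}^2$. After the standard manipulations (as in \cite{chs2020,lxz2013}) the dissipative terms yield $\mu\sigma^2\|\nabla\dot u\|_{L^2}^2+(\mu+\lambda)\sigma^2\|\mathrm{div}\dot u\|_{L^2}^2$ up to lower order; the pressure contribution gives terms bounded by $\sigma^2(\|\nabla u\|_{L^4}^4+\|\nabla u\|_{L^2}^2)$; and the Lorentz-force contribution produces terms controlled by $\sigma^2\|H\|_{L^\infty}^2\|\nabla H_t\|_{L^2}\|\nabla\dot u\|_{L^2}$ plus cubic terms in $\nabla u$ and $\nabla H$. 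All boundary integrals produced by the integrations by parts have the shape covered by Lemma \ref{lem-be} (using $u\cdot n=0$, $\curl u\times n=0$ and $H\cdot n=0$); for instance $\int_{\partial\Omega}\sigma^2(\mathrm{div}u)\dot u\cdot n\,dS$ is rewritten via \eqref{bdd2} and controlled by \eqref{b-e3} with $f=F,\;P-P_\infty,\;|H|^2$.

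\emph{Step 2: time-differentiated estimate for $H$.} Differentiate $\eqref{CMHD1}_3$ in $t$ to get
\[
H_{tt}+\nu\,\nabla\times\curl H_t=\nabla\times(u\times H)_t,
\]
multiply by $\sigma^2 H_t$, integrate by parts using $H\cdot n=0$ and $\curl H\times n=0$ (so that also $H_t$ satisfies the same boundary conditions), and obtain
\[
\tfrac12(\sigma^2\|H_t\|_{L^2}^2)_t+\nu\sigma^2\|\curl H_t\|_{L^2}^2\le \sigma\sigma'\|H_t\|_{L^2}^2+C\sigma^2\!\int|(u\times H)_t|\,|\curl H_t|\,dx,
\]
and expand $(u\times H)_t$, absorbing $\sigma^2\|u_t\|_{L^6}^2\|H\|_{L^3}^2\|\nabla\dot u$\ldots-type contributions via H\"older, \eqref{g1}, Lemma \ref{lem-high}, the $L^3$-bound on $H$ from \eqref{h-l3}, and Young's inequality. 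For $\sigma^2\|\curl^2 H\|_{L^2}^2$ we use $\eqref{CMHD1}_3$ as an identity:
\[
\nu\,\nabla\times\curl H=-H_t+\nabla\times(u\times H),
\]
so that $\|\curl^2 H\|_{L^2}\le C(\|H_t\|_{L^2}+\|\nabla u\|_{L^3}\|H\|_{L^6}+\|\nabla H\|_{L^3}\|u\|_{L^6})$, which together with Lemma \ref{lem-f-td} and \eqref{tdhk} is absorbed.

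\emph{Step 3: assembling and closing.} Adding the outputs of Steps 1 and 2, integrating in $t$ on $(0,T]$, using $\sigma'\le 1$ to convert the $\sigma\sigma'$ terms into $A_1(T)$, and invoking the a priori smallness \eqref{key1} (in particular $A_3(T)\le 2C_0^{1/9}$ and $\sup\rho\le 2\bar\rho$), the inequality takes the form
\[
A_2(T)\le C A_1(T)+C\!\int_0^T\!\!\sigma^2\|\nabla u\|_{L^4}^4\,dt+C\!\int_0^T\!\!\sigma^2\bigl(\|\nabla u\|_{L^2}^2+\|\nabla H\|_{L^2}^2\bigr)\|\nabla H\|_{L^6}^2\,dt+\text{(small)}.
\]
Interpolating $\|\nabla H\|_{L^6}$ by \eqref{tdhk} between $\|\curl^2 H\|_{L^2}$ and $\|\nabla H\|_{L^2}$, then using \eqref{key1}, \eqref{basic3} and the bound $\|H\|_{L^3}\le CC_0^{1/27}$ from Lemma \ref{lem-h3}, all remainders are controlled by a power of $C_0$; a careful bookkeeping of the exponents (the smallest being $\tfrac{31}{54}$, arising from combining $\tfrac{1}{2}\cdot\tfrac{1}{9}$-type factors with the $C_0^{1/9}$ interpolation of $H$) yields the announced $CC_0^{31/54}$.

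The main obstacle will be the boundary analysis in Step 1: when we integrate by parts twice in the $(\mu+\lambda)\partial_j(\mathrm{div}u)_t$ and $\mu\Delta u_t$ terms against $\sigma^2\dot u$, we produce boundary integrals that do \emph{not} vanish under slip conditions because $\dot u\cdot n\ne 0$ on $\partial\Omega$. These must be reorganised using $\dot u\cdot n=-u\cdot\nabla n\cdot u$ (as in \eqref{bdd2}) and handled by the delicate estimates \eqref{b-e1}--\eqref{b-e3} of Lemma \ref{lem-be}. A secondary technical point is that the coupling term $H_t\cdot\nabla H$ in the Lorentz-force material derivative must be absorbed into $\nu\sigma^2\|\curl H_t\|_{L^2}^2$ (via Lemma \ref{lem-curl}) before one can close the system, which is why the two estimates must be performed in parallel and added.
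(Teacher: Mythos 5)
Your overall architecture matches the paper's: a $\sigma^2$-weighted material-derivative energy estimate for $u$, a parallel $\sigma^2$-weighted estimate for $H_t$, recovery of $\sigma^2\|\curl^2 H\|_{L^2}^2$ from the induction equation itself, and closure by adding the two and absorbing the cross terms using the smallness $A_3(T)\le 2C_0^{1/9}$. However, there is a concrete gap in your Step 1. You keep the viscous operator in the form $\mu\Delta u_t+(\mu+\lambda)\nabla(\div u)_t$; integrating the $\mu\Delta u_t$ contribution by parts against $\sigma^2\dot u$ produces the boundary integral $\int_{\partial\Omega}\sigma^2\,(\nabla u_t\, n)\cdot\dot u\,dS$, which involves the full normal derivative of $u_t$. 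This is \emph{not} of the form $\int_{\partial\Omega}f\,u\cdot\nabla u\cdot n\,dS$ covered by Lemma \ref{lem-be}, and it is not controlled by the slip conditions \eqref{navier-b}: the identity $\dot u\cdot n=-u\cdot\nabla n\cdot u$ helps only when the boundary integrand is a scalar multiple of $\dot u\cdot n$, which is the case for the $\nabla(\div u)_t$ piece but not for the Laplacian piece. The paper sidesteps this entirely by working from $\rho\dot u=\nabla F-\mu\nabla\times\omega+H\cdot\nabla H$ (i.e.\ the decomposition $(\lambda+2\mu)\nabla\div-\mu\nabla\times\curl$): the vorticity part contributes no boundary term since $\omega_t\times n=0$ on $\partial\Omega$, and the only surviving viscous boundary term is $\int_{\partial\Omega}\sigma^m F_t\,\dot u\cdot n\,dS$, which \emph{is} of the admissible form and is handled in \eqref{J11}.

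Two smaller omissions follow from the same choice. First, with the $\div$--$\curl$ decomposition the dissipation one obtains is $(\lambda+2\mu)\|\div\dot u\|_{L^2}^2+\mu\|\curl\dot u\|_{L^2}^2$, and since $\dot u\cdot n\neq 0$ on $\partial\Omega$ one cannot apply Lemma \ref{lem-vn} directly; the paper needs the observation $(\dot u-(u\cdot\nabla n)\times u^{\perp})\cdot n=0$ together with Lemma \ref{lem-vn} (its \eqref{tdudot}) to recover $\|\nabla\dot u\|_{L^2}$, and you should include this step. Second, in your Step 2 the terms generated by $(u\times H)_t$ involve $u_t$, whose gradient is not among the controlled quantities; you must substitute $u_t=\dot u-u\cdot\nabla u$ (as the paper does in splitting $K_2$ and $K_3$) before the absorption into $\sigma^2\|\nabla\dot u\|_{L^2}^2$ can be carried out, and the resulting boundary term $\int_{\partial\Omega}\sigma^m(\dot u\cdot n)(H\cdot H_t)\,dS$ again needs the identity $\dot u\cdot n=-u\cdot\nabla n\cdot u$. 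With these repairs the argument closes exactly as you describe.
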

\begin{proof} Operating $ \sigma^{m}\dot{u}^{j}[\pa/\pa t+\div (u\cdot)] $ to $ (\ref{2m2})^j,$ summing with respect to $j$, and integrating over $\Omega,$ together with $ \eqref{CMHD}_1 $, we get
\begin{align}\label{J0}
\displaystyle  &\left(\frac{\sigma^{m}}{2}\int\rho|\dot{u}|^{2}dx\right)_t-\frac{m}{2}\sigma^{m-1}\sigma'\int\rho|\dot{u}|^{2}dx \nonumber \\
 =& \int\sigma^{m}(\dot{u}\cdot\nabla F_t+\dot{u}^{j}\,\div(u\partial_jF))dx \nonumber \\
&\quad+\mu\int\sigma^{m}(-\dot{u}\cdot\nabla\times\omega_t-\dot{u}^{j}\div((\nabla\times\omega)^j\,u))dx \nonumber \\
&\quad+\int\sigma^{m}(\dot{u}\cdot(\div (H\otimes H))_t+\dot{u}^{j}\div((\div (H\otimes H^j)\,u))dx \nonumber \\
 \triangleq & J_1+ J_2+ J_3.
\end{align}
Let us estimate $J_1, J_2$ and $J_3$.
By \eqref{navier-b} and \eqref{CMHD1}$_1$, a direct computation yields
\begin{align}\label{J10}
J_1 
& = \int_{\partial\Omega}\sigma^{m}F_t\dot{u}\cdot nds-\int\sigma^{m}F_t\,\div\dot{u}dx-\int\sigma^{m}u \cdot \nabla\dot{u}^j\partial_jFdx \nonumber \\
& = \int_{\partial\Omega}\sigma^{m}F_t\dot{u}\cdot nds - (\lambda+2\mu)\int\sigma^{m}(\div\dot{u})^{2}dx + (\lambda+2\mu)\int\sigma^{m}\div\dot{u}\,\nabla u:\nabla udx \nonumber\\
& \quad -\gamma\int\sigma^{m} P\div\dot{u}\,\div udx+\int\sigma^{m}\div \dot{u}\, u \cdot \nabla F dx-\int\sigma^{m}u \cdot\nabla\dot{u}^j\partial_jF dx \nonumber \\
 &\quad +\int\sigma^{m}\div\dot{u}\,H \cdot H_tdx+\int\sigma^{m}\div\dot{u}\,u \cdot \nabla H \cdot H dx \nonumber\\
& \leq \int_{\partial\Omega}\sigma^{m}F_t\dot{u}\cdot nds - (\lambda+2\mu)\int\sigma^{m}(\div\dot{u})^{2}dx + \frac{\delta}{12}\sigma^{m}\|\nabla\dot{u}\|\ltwo+C\sigma^m \|\nabla u\|^4_{L^4}\nonumber\\
& \quad+C\sigma^m \big(\|\nabla u\|_{L^2}^2 \|\nabla F\|_{L^3}^2+ C_0^{\frac{2}{27}}\|\nabla H_t\|_{L^2}^2+\|\nabla u\|_{L^2}^2\|\nabla H\|_{L^2}^2\|\nabla H\|_{L^6}^2\big),
\end{align}
where in the second equality we have used
\begin{align*}
\displaystyle F_t=(2\mu+\lm)\div\dot  u-(2\mu+\lm)\na u:\na u  - u\cdot\na F+u \cdot \nabla H \cdot H+\ga P\div u-H \cdot H_t.
\end{align*}
For the boundary term on the righthand side of \eqref{J10}, using Lemma \ref{lem-be} with $f=F$, we have
\begin{align}\label{J11}
&\int_{\partial\Omega}\sigma^{m}F_t\dot{u}\cdot nds=-\int_{\partial\Omega}\sigma^{m}F_t\,(u\cdot\nabla n\cdot u)ds \nonumber\\
= & -\left(\int_{\partial\Omega}\sigma^{m}(u\cdot\nabla n\cdot u)Fds\right)_t+m\sigma^{m-1}\sigma'\int_{\partial\Omega}(u\cdot\nabla n\cdot u)Fds \nonumber\\
&\quad +\int_{\partial\Omega}\sigma^{m}\big(F\dot{u}\cdot\nabla n \cdot u+Fu\cdot\nabla n \cdot\dot{u}\big)ds \nonumber \\
&\quad  -\int_{\partial\Omega}\sigma^{m}\big(F(u \cdot \nabla) u\cdot\nabla n \cdot u+Fu\cdot\nabla n \cdot(u \cdot \nabla) u\big)ds\nonumber\\
\leq& -\left(\int_{\partial\Omega}\sigma^{m}(u\cdot\nabla n\cdot u)Fds\right)_t+Cm\sigma^{m-1}\sigma'\|\nabla u\|_{L^2}^{2}\|\nabla F\|_{L^2}+\frac{\delta}{12}\sigma^{m}\|\nabla\dot{u}\|_{L^2}^2 \nonumber\\
&\quad+C\sigma^{m}(\|\nabla u\|_{L^2}^{2}\|\nabla F\|_{L^2}^{2}+\|\nabla u\|_{L^2}^{3}\|\nabla F\|_{L^2}+\|\na F\|_{L^6}\|\na u\|^3_{L^2}+\|\na u\|_{L^4}^4).
\end{align}
From Lemma \ref{lem-f-td} and \eqref{h-tdh}, we have
\begin{align}
&\|\nabla u\|_{L^2}^{2}\|\nabla F\|_{L^2}^{2}\leq C(\|\sqrt{\rho}\dot{u}\|_{L^2}^{2}+\|\curl^2  H\|_{L^2}^{2})\|\nabla u\|_{L^2}^{2}+C\|\nabla u\|_{L^2}^{2}\|\nabla H\|_{L^2}^{2}, \\
&\|\na F\|_{L^6}\|\na u\|^3_{L^2}\leq \frac{\delta}{12}\sigma^{m}\|\nabla\dot{u}\|_{L^2}^2+C\|\na u\|^6_{L^2}+C\|\curl^2  H\|_{L^2}^{2}\|\na u\|^4_{L^2}\nonumber \\
 &\quad\qquad\quad\quad\quad\quad+C\|\nabla H\|_{L^2}^{2}+C\|\nabla H\|_{L^2}^{4},\\
 &\|\nabla u\|_{L^2}^{2}\|\nabla F\|_{L^3}^{2}\leq \frac{\delta}{12}\sigma^{m}\|\nabla\dot{u}\|_{L^2}^2+C\|\sqrt{\rho}\dot{u}\|_{L^2}^{2}\|\nabla u\|_{L^2}^{4} \nonumber \\
 &\quad\qquad\quad\quad\quad\quad +C\|\nabla u\|_{L^2}^{2}\|\nabla H\|_{L^2}^{2}\|\curl^2  H\|_{L^2}^{2}+C\|\nabla u\|_{L^2}^{2}\|\nabla H\|_{L^2}^{4}.\label{j11-1}
\end{align}
Putting \eqref{J11}-\eqref{j11-1} into \eqref{J10}, we have
\begin{align}\label{J1}
& J_1 \leq - (\lambda+2\mu)\int\sigma^{m}(\div\dot{u})^{2}dx -\left(\int_{\partial\Omega}\sigma^{m}(u\cdot\nabla n\cdot u)Fds\right)_t\nonumber\\
& \quad + \frac{\delta}{3}\sigma^{m}\|\nabla\dot{u}\|\ltwo+C\sigma^m C_0^{\frac{2}{27}}\|\nabla H_t\|\ltwo+C\sigma^m(\|\nabla H\|_{L^2}^4+\|\nabla u\|_{L^2}^4)\|\curl^2  H\|\ltwo \nonumber\\
& \quad +C\sigma^{m}(\|\sqrt{\rho}\dot{u}\|_{L^2}^{2}+\|\curl^2  H\|\ltwo)\|\nabla u\|_{L^2}^{2}+C\sigma^{m}\|\sqrt{\rho}\dot{u}\|\ltwo \|\nabla u\|_{L^2}^4\nonumber\\
&\quad +C\sigma^m \|\nabla u\|^4_{L^4}+C\sigma^m(\|\nabla u\|_{L^2}^2+1)(\|\nabla u\|_{L^2}^4+\|\nabla H\|_{L^2}^4)+C\sigma^m\|\nabla H\|_{L^2}^2\nonumber\\
&\quad+Cm\sigma^{m-1}\sigma'(\|\sqrt{\rho}\dot{u}\|_{L^2}^{2}+\|\curl^2  H\|\ltwo+\|\nabla u\|_{L^2}^{4}+\|\nabla H\|_{L^2}^{2}).
\end{align}

Next, noticing $ \omega_t=\curl \dot u-u\cdot \na \omega-\na u^i\times \pa_iu$ and \eqref{navier-b}, it follows
\begin{align}\label{J20}
J_2 
&=-\mu\int\sigma^{m}|\curl \dot{u}|^{2}dx+\mu\int\sigma^{m}\curl\dot{u}\cdot(\nabla u^i\times\nabla_i u)dx \nonumber\\
&\quad-\mu\int\sigma^{m}\div u (\omega \cdot \curl\dot{u})dx-\mu\int\sigma^{m} (\omega \times\nabla u^i)  \cdot\nabla_i\dot{u}  dx\nonumber\\
&\leq -\mu\int\sigma^{m}|\curl \dot{u}|^{2}dx+\frac{\delta}{3}\sigma^{m}\|\nabla\dot{u}\|_{L^2}^2+C\sigma^{m}\|\nabla u\|_{L^4}^4.
\end{align}

Finally, a directly computation shows that
\begin{align}\label{J30}
\displaystyle J_3 
& =-\int\sigma^{m}\nabla\dot{u}:(H\otimes H)_t dx-\mu\int\sigma^{m}H \cdot \nabla H^j u \cdot \nabla\dot{u}^{j}dx\nonumber \\
& \leq C\sigma^{m}(\|\nabla\dot{u}\|_{L^2} \|H\|_{L^3} \|H_t\|_{L^6}+\|\nabla\dot{u}\|_{L^2} \|H\|_{L^6} \|\nabla H\|_{L^6}\|u\|_{L^6} ) \nonumber \\
& \leq \frac{\delta}{3}\sigma^{m}\|\nabla\dot{u}\|_{L^2}^2+C\sigma^m(\|\nabla H\|_{L^2}^4+\|\nabla u\|_{L^2}^4)\|\curl^2  H\|\ltwo \nonumber\\
& \quad +C\sigma^m C_0^{\frac{2}{27}}\|\nabla H_t\|\ltwo+C\sigma^m \|\nabla H\|_{L^2}^4\|\nabla u\|_{L^2}^2.
\end{align}

Combining \eqref{J1}, \eqref{J20} with \eqref{J30}, we deduce from \eqref{J0} that
\begin{align}\label{J01}
&\left(\frac{\sigma^{m}}{2}\|\sqrt{\rho}\dot{u}\|_{L^2}^2\right)_t+(\lambda+2\mu)\sigma^{m}\|\div\dot{u}\|_{L^2}^2+\mu\sigma^{m}\|\curl\dot{u}\|_{L^2}^2 \nonumber\\
&\leq -\left(\int_{\partial\Omega}\sigma^{m}(u\cdot\nabla n\cdot u)Fds\right)_t+
\delta\sigma^{m}\|\nabla\dot{u}\|\ltwo+C\sigma^m C_0^{\frac{2}{27}}\|\nabla H_t\|\ltwo \nonumber\\
& \quad +C\sigma^m \|\nabla u\|^4_{L^4}+C\sigma^m(\|\nabla H\|_{L^2}^4+\|\nabla u\|_{L^2}^4)\|\curl^2  H\|\ltwo \nonumber\\
& \quad +C\sigma^{m}(\|\sqrt{\rho}\dot{u}\|_{L^2}^{2}+\|\curl^2  H\|\ltwo)\|\nabla u\|_{L^2}^{2}+C\sigma^{m}\|\sqrt{\rho}\dot{u}\|\ltwo \|\nabla u\|_{L^2}^4\nonumber\\
&\quad +C\sigma^m(\|\nabla u\|_{L^2}^2+1)(\|\nabla u\|_{L^2}^4+\|\nabla H\|_{L^2}^4)+C\sigma^m\|\nabla H\|_{L^2}^2\nonumber\\
&\quad+Cm\sigma^{m-1}\sigma'(\|\sqrt{\rho}\dot{u}\|_{L^2}^{2}+\|\curl^2  H\|\ltwo+\|\nabla u\|_{L^2}^{4}+\|\nabla H\|_{L^2}^{2}).
\end{align}
As observed in \cite{cl2019}, it follows from \eqref{bdd3} that
\begin{equation}
\displaystyle  (\dot{u}-(u\cdot\nabla n)\times u^{\perp})\cdot n=0,
\end{equation}
which together with Lemma \ref{lem-vn} yields
\begin{align}
\|\nabla\dot{u}\|_{L^2}&\leq C(\|\div \dot{u}\|_{L^2}+\|\curl \dot{u}\|_{L^2}+\|\nabla[(u\cdot\nabla n)\times u^\perp]\|_{L^2})\nonumber \\
 & \leq C(\|\div \dot{u}\|_{L^2}+\|\curl \dot{u}\|_{L^2}+\|\nabla u\|_{L^2}^2+\|\nabla u\|_{L^4}^2).\label{tdudot}
\end{align}
By \eqref{tdudot} and Lemma \ref{lem-basic}, choosing $\delta$ small enough, and integrating \eqref{J01} over $(0,T]$, for $m>0$, we get
\begin{align}\label{J03}
&\quad \sigma^{m}\|\sqrt{\rho}\dot{u}\|_{L^2}^2+\int_0^T\sigma^{m}\|\nabla\dot{u}\|_{L^2}^2dt\nonumber\\
&\leq -\int_{\partial\Omega}\sigma^{m}(u\cdot\nabla n\cdot u)Fds+CC_0^{\frac{2}{27}}\int_0^T\sigma^m \|\nabla H_t\|\ltwo dt \nonumber\\
& \quad +C\int_0^T\sigma^m \|\nabla u\|^4_{L^4}dt+CC_0^{\frac{2}{9}}\sup_{0\leq t\leq T}\sigma^m(\|\curl^2  H\|\ltwo+\|\sqrt{\rho}\dot{u}\|_{L^2}^{2}) \nonumber\\
& \quad +CC_0^{\frac{2}{9}}\sup_{0\leq t\leq T}\sigma^{m}(\|\nabla H\|\ltwo+\|\nabla u\|_{L^2}^{2})+CC_0\sup_{0\leq t\leq \sigma(T)}\sigma^{m-1}\|\nabla u\|_{L^2}^{2} \nonumber\\
&\quad+C \int_0^{\sigma(T)} m\sigma^{m-1}(\|\sqrt{\rho}\dot{u}\|_{L^2}^{2}+\|\curl^2  H\|\ltwo)dt +CC_0.
\end{align}
For the boundary term in the right-hand side of \eqref{J03} , using Lemma \ref{lem-be} again, we have
\begin{align}\label{J0b1}
&\quad\int_{\partial\Omega}(u\cdot\nabla n\cdot u)Fds\leq C\|\nabla u\|_{L^2}^{2}\|\nabla F\|_{L^2}\nonumber\\
&\leq\frac{1}{2}\|\sqrt{\rho}\dot{u}\|_{L^2}^2+CC_0^{\frac{2}{27}}\|\curl^2  H\|\ltwo+C(\|\nabla H\|_{L^2}^2+\|\nabla u\|_{L^2}^4).
\end{align}
Therefore,
\begin{align}\label{J04}
&\sigma^{m}\|\sqrt{\rho}\dot{u}\|_{L^2}^2+\int_0^T\sigma^{m}\|\nabla\dot{u}\|_{L^2}^2dt-C_2C_0^{\frac{2}{27}}\int_0^T\sigma^m \|\nabla H_t\|\ltwo dt \nonumber\\
&\leq C\int_0^T\sigma^m \|\nabla u\|^4_{L^4}dt+CC_0^{\frac{2}{9}}\sup_{0\leq t\leq T}\sigma^m(\|\curl^2  H\|\ltwo+\|\sqrt{\rho}\dot{u}\|_{L^2}^{2}) \nonumber\\
& \quad +CC_0^{\frac{2}{9}}\sup_{0\leq t\leq T}\sigma^{m}(\|\nabla H\|\ltwo+\|\nabla u\|_{L^2}^{2})+CC_0\sup_{0\leq t\leq \sigma(T)}\sigma^{m-1}\|\nabla u\|_{L^2}^{2} \nonumber\\
&\quad+C \int_0^{\sigma(T)} m\sigma^{m-1}(\|\sqrt{\rho}\dot{u}\|_{L^2}^{2}+\|\curl^2  H\|\ltwo)dt +CC_0 \nonumber\\
&\quad+CC_0^{\frac{2}{27}}\sigma^{m}\|\curl^2  H\|\ltwo+C\sigma^{m}(\|\nabla H\|_{L^2}^2+\|\nabla u\|_{L^2}^4).
\end{align}

Next, we need to estimate the term $\|\nabla H_t\|_{L^2}$. Noticing that
\begin{equation}\label{ht}
\begin{cases}
 H_{tt}-\nu \nabla \times (\curl H_t)=(H \cdot \nabla u-u \cdot \nabla H-H \div u)_t,&\text{in}\quad \Omega,\\
 H_t \cdot n=0,\quad \curl H_t \times n=0,& \text{on}\quad \partial\Omega,\\
 \end{cases}
\end{equation}
and after directly computations we obtain
\begin{align}\label{ht1}
&\quad \left(\frac{\sigma^{m}}{2}\|H_t\|_{L^2}^2\right)_t+\sigma^{m}\|\curl H_t\|_{L^2}^2-\frac{m}{2}\sigma^{m-1}\sigma' \|H_t\|\ltwo\nonumber\\
&= \int \sigma^{m}(H_t \cdot \nabla u-u \cdot \nabla H_t-H_t \div u)\cdot H_t dx \nonumber \\
&\quad+ \int \sigma^{m}(H \cdot \nabla \dot{u}-\dot{u} \cdot \nabla H-H \div \dot{u})\cdot H_t dx \nonumber \\
&\quad - \int \sigma^{m}(H \cdot \nabla (u \cdot \nabla u)-(u \cdot \nabla u)\cdot \nabla H-H \div(u \cdot \nabla u) )\cdot H_t dx \nonumber \\
& \triangleq K_1+K_2+K_3.
\end{align}
By Lemma \ref{lem-gn} and Lemma \ref{lem-f-td}, a direct calculation leads to
\begin{align}\label{htk1}
K_1 
& \leq C\sigma^{m}(\|H_t\|_{L^3}\|H_t\|_{L^6}\|\nabla u\|_{L^2}
+\|u\|_{L^6}\|H_t\|_{L^3}\|\nabla H_t\|_{L^2})\nonumber\\
&\leq \frac{\delta}{4}\sigma^{m}\|\nabla H_t\|_{L^2}^{2}+C\sigma^{m}\|\nabla u\|_{L^2}^4\|H_t\|_{L^2}^{2}.
\end{align}
Similarly,
\begin{align}\label{htk20}
K_2 
& \leq C\sigma^{m}\|H\|_{L^3}\|H_t\|_{L^6}\|\nabla \dot{u}\|_{L^2}
-\int_{\partial \Omega}\sigma^{m}(\dot{u} \cdot n)( H \cdot H_t) ds \nonumber\\
 & \quad +\int\sigma^{m}\div\dot{u}\, H \cdot H_t dx+\int\sigma^{m} \dot{u}\cdot \nabla H_t \cdot H dx \nonumber\\
&\leq \int_{\partial \Omega}\sigma^{m}(u \!\cdot \!\nabla n \cdot u)( H\! \cdot\! H_t) ds+CC_0^{\frac{1}{27}}\sigma^{m}(\|\nabla \dot{u}\|_{L^2}^{2}\!+\!\|\nabla H_t\|_{L^2}^{2}).
\end{align}
For the boundary term in the last inequality, we use the similar method as that used in Lemma \ref{lem-be} to get that
\begin{align}\label{htk21}
& \quad\int_{\partial \Omega}\sigma^{m}(u \cdot \nabla n \cdot u)( H \cdot H_t) ds\nonumber\\
&\leq C\sigma^{m}(\|u\|_{L^6}\|\nabla u\|_{L^2}\|H\|_{L^6}\|H_t\|_{L^6}+\|u\|_{L^6}^2\|\nabla H\|_{L^2}\|H_t\|_{L^6}\nonumber\\
& \quad+\|u\|_{L^6}^2\|\nabla H_t\|_{L^2}\|H\|_{L^6}+\|u\|_{L^6}^{2}\|H\|_{L^6}\|H_t\|_{L^6})\nonumber\\
&\leq \frac{\delta}{4}\sigma^{m}\|\nabla H_t\|_{L^2}^{2}+C\sigma^{m}\|\nabla u\|_{L^2}^4\|\nabla H\|_{L^2}^{2}.
\end{align}
Combining \eqref{htk20} and \eqref{htk21}, we have
\begin{align}\label{htk2}
K_2 &\leq  \frac{\delta}{4}\sigma^{m}\|\nabla H_t\|_{L^2}^{2}+C\sigma^{m}(\|\nabla u\|_{L^2}^4\|\nabla H\|_{L^2}^{2}
+C^{\frac{1}{27}}(\|\nabla \dot{u}\|_{L^2}^{2}+\|\nabla H_t\|_{L^2}^{2})).
\end{align}
Similarly, by \eqref{htk21}, a direct computation yields
\begin{align}\label{htk3}
K_3 
&\leq \frac{\delta}{2}\sigma^{m}\|\nabla H_t\|_{L^2}^{2}+C\sigma^{m}(\|\nabla u\|_{L^2}^4+\|\nabla H\|_{L^2}^{4})(\|\sqrt{\rho}\dot{u}\|_{L^2}^{2}+\|\curl^2  H\|\ltwo)\nonumber \\
 &\quad+C\sigma^{m}\|\nabla u\|_{L^2}^2\|\nabla H\|_{L^2}^{2}(\|\nabla u\|_{L^2}^2+\|\nabla H\|_{L^2}^{2}+1).
\end{align}
Putting \eqref{htk1}, \eqref{htk2} and \eqref{htk3} into \eqref{ht1}, choosing $\delta$ small enough, we have
\begin{align}\label{ht3}
&\quad \left(\sigma^{m}\|H_t\|_{L^2}^2\right)_t+\sigma^{m}\|\nabla H_t\|_{L^2}^2-CC_0^{\frac{1}{27}}\sigma^{m}(\|\nabla \dot{u}\|_{L^2}^{2}+\|\nabla H_t\|_{L^2}^{2})\nonumber\\
&\leq C\sigma^{m}(\|\nabla u\|_{L^2}^4+\|\nabla H\|_{L^2}^{4})(\|\sqrt{\rho}\dot{u}\|_{L^2}^{2}+\|\curl^2  H\|\ltwo+\|H_t\|_{L^2}^2)\nonumber \\
 &\quad+C\sigma^{m}\|\nabla u\|_{L^2}^2\|\nabla H\|_{L^2}^{2}(\|\nabla u\|_{L^2}^2+\|\nabla H\|_{L^2}^{2}+1)+Cm\sigma^{m-1}\sigma' \|H_t\|\ltwo.
\end{align}
Integrating over $(0,T]$, then by Lemma \ref{lem-basic}, for $m>0$, we get
\begin{align}\label{ht4}
&\quad \sigma^{m}\|H_t\|_{L^2}^2+\int_0^T\sigma^{m}\|\nabla H_t\|_{L^2}^2dt-C_3C^{\frac{1}{27}}\int_0^T\sigma^{m}(\|\nabla \dot{u}\|_{L^2}^{2}+\|\nabla H_t\|_{L^2}^{2})dt\nonumber\\
&\leq CC_0^{\frac{2}{9}}\sup_{0\leq t\leq T}\sigma^{m}(\|\sqrt{\rho}\dot{u}\|_{L^2}^{2}+\|\curl^2  H\|\ltwo+\|H_t\|_{L^2}^2)+CC_0\sup_{0\leq t\leq T}\sigma^{m}\|\nabla u\|_{L^2}^2\nonumber \\
 &\quad+CC_0^{\frac{2}{9}}\sup_{0\leq t\leq T}\sigma^{m}(\|\nabla u\|_{L^2}^2+\|\nabla H\|_{L^2}^{2})+C \int_0^{\sigma(T)} m\sigma^{m-1}\|H_t\|\ltwo dt.
\end{align}
Now take $m=2$ in \eqref{J04} and \eqref{ht4}, we deduce after adding them together that
\begin{align}\label{a20}
&\sigma^{2}(\|\sqrt{\rho}\dot{u}\|_{L^2}^2+\|H_t\|_{L^2}^2)+\int_0^T\sigma^{2}(\|\nabla\dot{u}\|_{L^2}^2+\|\nabla H_t\|_{L^2}^2)dt \nonumber \\
& \quad -C_2C_0^{\frac{2}{27}}\int_0^T\sigma^2 \|\nabla H_t\|\ltwo dt -C_3C^{\frac{1}{27}}\int_0^T\sigma^{2}(\|\nabla \dot{u}\|_{L^2}^{2}+\|\nabla H_t\|_{L^2}^{2})dt\nonumber\\
&\leq C\int_0^T\sigma^2 \|\nabla u\|^4_{L^4}dt+CC_0^{\frac{2}{9}}\sup_{0\leq t\leq T}\sigma^2(\|\sqrt{\rho}\dot{u}\|_{L^2}^{2}+\|\curl^2  H\|\ltwo+\|H_t\|_{L^2}^2) \nonumber\\
& \quad +CC_0^{\frac{2}{9}}\sup_{0\leq t\leq T}\sigma^{2}(\|\nabla H\|\ltwo+\|\nabla u\|_{L^2}^{2})+CC_0\sup_{0\leq t\leq T}\sigma\|\nabla u\|_{L^2}^{2} \nonumber\\
&\quad+C \int_0^{\sigma(T)} \sigma(\|\sqrt{\rho}\dot{u}\|_{L^2}^{2}+\|\curl^2  H\|\ltwo+\|H_t\|\ltwo)dt +CC_0 \nonumber\\
&\quad+CC_0^{\frac{2}{27}}\sigma^{2}\|\curl^2  H\|\ltwo+C\sigma^{2}(\|\nabla H\|_{L^2}^2+\|\nabla u\|_{L^2}^4)\nonumber\\
&\leq C\int_0^T\sigma^2 \|\nabla u\|^4_{L^4}dt+CC_0^{\frac{13}{18}}+CA_1(T)+CC_0+CC_0^{\frac{31}{54}}.
\end{align}
Thus we have
\begin{align}\label{a21}
&\quad \sup_{0\leq t\leq T}\sigma^{2}(\|\sqrt{\rho}\dot{u}\|_{L^2}^2+\|H_t\|_{L^2}^2)+\int_0^T\sigma^{2}(\|\nabla\dot{u}\|_{L^2}^2+\|\nabla H_t\|_{L^2}^2)dt \nonumber \\
&\leq C\int_0^T\sigma^2 \|\nabla u\|^4_{L^4}dt+CA_1(T)+CC_0^{\frac{31}{54}}.
\end{align}
provided that $C_0$ is chosen to satisfy
\begin{align*}
	C_0\leq \ve_2 \triangleq \min \{\ve_1, (4C_2)^{-{\frac{27}{2}}},(4C_3)^{-27}\}.
\end{align*}
Finally, by Lemma \ref{lem-gn} and \eqref{CMHD}$_3$, it holds
\begin{align}\label{h2xd1}
\|\curl^2  H\|_{L^2}
&\leq C(\|H_t\|_{L^2}+\|\curl^2  H\|_{L^2}^{\frac{1}{2}}\|\nabla H\|_{L^2}^{\frac{1}{2}}\|\nabla u\|_{L^2}+\|\nabla H\|_{L^2}\|\nabla u\|_{L^2})\nonumber \\
 &\leq \frac{1}{2}\|\curl^2  H\|_{L^2}\!+\!C(\|H_t\|_{L^2}\!+\!\|\nabla H\|_{L^2}\|\nabla u\|_{L^2}^2\!+\!\|\nabla H\|_{L^2}\|\nabla u\|_{L^2}).
\end{align}
Thus, by \eqref{key1} and \eqref{h2xd1}, we have
\begin{align}\label{h2xd2}
& \quad \sup_{0\leq t\leq T}\sigma^{2} \|\curl^2  H\|_{L^2}^2
\leq  C\int_0^T\sigma^2 \|\nabla u\|^4_{L^4}dt+CA_1(T)+CC_0^{\frac{31}{54}}.
\end{align}
Combining \eqref{a21} and \eqref{h2xd2}, we give \eqref{a02} and complete the proof of Lemma \ref{lem-a0}.
\end{proof}

\begin{lemma}\label{lem-a1}
Under the conditions of Proposition \ref{pr1}. Then there exist  positive constants $\tilde{C}=C(\bar{\rho},M_1,M_2)$ and $\varepsilon_3$ depending only on $\mu,\,\,\lambda,\,\,\nu,\,\,\gamma,\,\,a,\,\,{\rho}_{\infty},\,\,\bar{\rho}$, $M_1$  and $M_2$ such that if $C_0<\varepsilon_3$,
\begin{align}
& \sup_{0\le t\le \sigma(T)}  \|\nabla u\|_{L^2}^2+\int_0^{\si(T)} \|\sqrt{\rho}\dot{u}\|_{L^2}^2dt\le \tilde{C},\label{uv1}\\
&\sup_{0\le t\le \si(T)}\!\!\! t (\|\sqrt{\rho}\dot{u}\|_{L^2}^2\!+\!\|\curl^2 H\|_{L^2}^2\!\!+\!\!\|H_t\|_{L^2}^2)
\!+\!\!\int_0^{\si(T)}\!\!\!\! t (\|\nabla\dot{u}\|_{L^2}^2\!+\!\|\nabla H_t\|_{L^2}^2)dt\!\leq \tilde{C}.\label{uv2}
\end{align}
\end{lemma}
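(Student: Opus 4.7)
The plan is to mirror the proofs of Lemmas \ref{lem-tdh}--\ref{lem-a0} but restricted to the short interval $[0,\sigma(T)]\subset[0,1]$, choosing the weight $m=0$ for \eqref{uv1} and $m=1$ for \eqref{uv2}. The crucial distinction from those lemmas is that on $[0,\sigma(T)]$ I may pay with the initial regularity bounds $\|\nabla u_0\|_{L^2}\le M_1$ and $\|\nabla H_0\|_{L^2}\le M_2$ from \eqref{dt-s}, so the singular weight at $t=0$ is unnecessary and the resulting constant $\tilde C$ is allowed to depend on $M_1,M_2$ (but not on $T$).

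For \eqref{uv1} I would take $m=0$ in \eqref{I01} and integrate on $[0,t]$ for $t\in[0,\sigma(T)]$. The two time-derivative terms on the right become endpoint traces which are absorbed via Cauchy--Schwarz and Remark \ref{rem:energy} into $\tfrac12\mu\|\omega\|_{L^2}^2+\tfrac12(\lambda+2\mu)\|\div u\|_{L^2}^2+CC_0$, the $m\sigma^{m-1}\sigma'$ contributions vanish identically, and all the $H$-dependent right-hand sides are dispatched by \eqref{tdh-4}. The one nontrivial term is the cubic $\int_0^t\|\nabla u\|_{L^3}^3\,ds$, which I bound via \eqref{tdu2} with $p=3$ and Young's inequality as
\begin{equation*}
\|\nabla u\|_{L^3}^3\le \varepsilon\|\sqrt{\rho}\dot u\|_{L^2}^2+C\|\nabla u\|_{L^2}^6+C\|H\cdot\nabla H\|_{L^2}^2+C\|\nabla u\|_{L^2}^3.
\end{equation*}
Setting $Y(t)=\sup_{[0,t]}\|\nabla u\|_{L^2}^2$ and invoking \eqref{basic3} gives
\begin{equation*}
\int_0^t \|\nabla u\|_{L^2}^6\,ds \le Y(t)\int_0^{\sigma(T)}\|\nabla u\|_{L^2}^4\,ds \le CC_0^{2/9}Y(t),
\end{equation*}
which is absorbed for $C_0$ sufficiently small and closes the bootstrap at $Y(\sigma(T))\le\tilde C$, yielding \eqref{uv1}.

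For \eqref{uv2} I would add \eqref{J01} at $m=1$ and \eqref{ht3} at $m=1$ and integrate over $[0,\sigma(T)]$. All endpoint traces carry a factor $\sigma(0)=0$ and so vanish; the $F$-boundary trace that appears at the upper endpoint after integrating $-(\int_{\partial\Omega}\sigma(u\cdot\nabla n\cdot u)F\,ds)_t$ is controlled exactly as in \eqref{J0b1}. The factor $m\sigma^{m-1}=1$ produces $\int_0^{\sigma(T)}(\|\sqrt{\rho}\dot u\|_{L^2}^2+\|\curl^2H\|_{L^2}^2+\|H_t\|_{L^2}^2+\|\nabla u\|_{L^2}^4+\|\nabla H\|_{L^2}^2)\,ds$, each piece of which is finite by \eqref{uv1} and \eqref{tdh-4}. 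The main new obstruction is the quartic integral $\int_0^{\sigma(T)}\sigma\|\nabla u\|_{L^4}^4\,ds$: invoking \eqref{tdu2} with $p=4$ together with the Step 1 bound $\|\nabla u\|_{L^2}^2\le\tilde C$ reduces it (modulo bounded remainders) to $C\tilde C^{1/2}\int_0^{\sigma(T)}\sigma\|\sqrt{\rho}\dot u\|_{L^2}^3\,ds$. To close the latter I use the interpolation
\begin{equation*}
\int_0^{\sigma(T)}\sigma\|\sqrt{\rho}\dot u\|_{L^2}^3\,ds\le \left(\sup_{[0,\sigma(T)]}\sigma\|\sqrt{\rho}\dot u\|_{L^2}^2\right)\int_0^{\sigma(T)}\|\sqrt{\rho}\dot u\|_{L^2}\,ds\le \tilde C\,X^{1/2},
\end{equation*}
where $X\triangleq\sup_{[0,\sigma(T)]}\sigma\|\sqrt{\rho}\dot u\|_{L^2}^2$ and Step 1 was used on the time-integral factor, followed by Young $\tilde C^{3/2}X^{1/2}\le \varepsilon X+C(\tilde C)$ to absorb $X$ into the left-hand side. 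The same splitting handles $\sigma\|\sqrt{\rho}\dot u\|_{L^2}^2\|\nabla u\|_{L^2}^4$ in \eqref{J01}. The terms $CC_0^{2/27}\|\nabla H_t\|_{L^2}^2$ and $CC_0^{1/27}\|\nabla\dot u\|_{L^2}^2$ in \eqref{J04} and \eqref{ht4} are absorbed by further smallness of $C_0$, and the corresponding $\sigma\|\curl^2H\|_{L^2}^2$ bound is recovered from \eqref{h2xd1}.

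The main obstacle I foresee is the cubic term $\sigma\|\sqrt{\rho}\dot u\|_{L^2}^3$: it is not amenable to a direct Young's inequality and requires the interpolation above trading a time-supremum against the integrated Step 1 estimate, supplemented by $\sigma(T)\le 1$ and the smallness $C_0\le\varepsilon_3$. Once this splitting is set and $\varepsilon_3$ is chosen small enough to defeat both the bootstrap in Step 1 and the $C_0^{1/27}$- and $C_0^{2/27}$-cross-terms in Step 2, the bounds \eqref{uv1}--\eqref{uv2} follow with $\tilde C=\tilde C(\bar{\rho},M_1,M_2)$.
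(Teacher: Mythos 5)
Your overall strategy is sound and it diverges from the paper's proof in an interesting way for \eqref{uv1}: the paper does \emph{not} work with the $\dot u$-multiplier identity \eqref{I01} at $m=0$; instead it multiplies $\eqref{CMHD1}_2$ by $u_t$ to get the identity \eqref{bw2}, and disposes of the convection term $\int\rho\dot u\cdot(u\cdot\nabla u)\,dx$ by writing it as $\|\sqrt{\rho}\dot u\|_{L^2}\|\rho^{1/3}u\|_{L^3}\|\nabla u\|_{L^6}$ and invoking the bootstrap bound $A_5(\sigma(T))\le 2C_0^{1/9}$ from \eqref{key1}, which produces a small prefactor $C_0^{1/27}$ in front of $\|\sqrt\rho\dot u\|_{L^2}^2$; no $\|\nabla u\|_{L^3}^3$ term ever appears. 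Your route instead pays for $\|\nabla u\|_{L^3}^3$ via \eqref{tdu2} and \eqref{basic3}, which works and has the modest advantage of not using the $A_5$ part of the bootstrap hypothesis, at the cost of tracking a few extra lower-order terms ($\|P-P_\infty\|_{L^6}^2$, $\||H|^2\|_{L^6}^2$) that you omit from your displayed Young inequality but which are harmless. For \eqref{uv2} you follow the paper's structure ($m=1$ in \eqref{J01} and \eqref{ht3}), but the paper closes $\int_0^{\sigma(T)}t\|\nabla u\|_{L^4}^4\,dt$ by extracting the small factor $A_4(\sigma(T))\le 2C_0^{1/9}$ in \eqref{bu3}, whereas you close it by Young's inequality against the Step-1 bound; both work.

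There is, however, one genuine misstep in your key interpolation. Your displayed chain
\begin{equation*}
\int_0^{\sigma(T)}\sigma\|\sqrt{\rho}\dot u\|_{L^2}^3\,ds\le \Big(\sup_{[0,\sigma(T)]}\sigma\|\sqrt{\rho}\dot u\|_{L^2}^2\Big)\int_0^{\sigma(T)}\|\sqrt{\rho}\dot u\|_{L^2}\,ds\le \tilde C\,X^{1/2}
\end{equation*}
is false as written: the middle quantity equals $X\int_0^{\sigma(T)}\|\sqrt{\rho}\dot u\|_{L^2}\,ds\le \tilde C^{1/2}X$ by Cauchy--Schwarz and Step 1, i.e.\ the \emph{full} power of $X$, which after multiplication by the prefactor $C\tilde C^{1/2}$ gives $C\tilde C X$ and cannot be absorbed since $\tilde C=\tilde C(\bar\rho,M_1,M_2)$ may be large. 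The fix is to redistribute the weight: write $\sigma\|\sqrt{\rho}\dot u\|_{L^2}^3=(\sigma\|\sqrt{\rho}\dot u\|_{L^2}^2)^{1/2}\cdot\sigma^{1/2}\|\sqrt{\rho}\dot u\|_{L^2}^2\le X^{1/2}\|\sqrt{\rho}\dot u\|_{L^2}^2$ (using $\sigma\le 1$), so that
\begin{equation*}
\int_0^{\sigma(T)}\sigma\|\sqrt{\rho}\dot u\|_{L^2}^3\,ds\le X^{1/2}\int_0^{\sigma(T)}\|\sqrt{\rho}\dot u\|_{L^2}^2\,ds\le \tilde C\,X^{1/2},
\end{equation*}
after which your Young step $\tilde C^{3/2}X^{1/2}\le\varepsilon X+C(\tilde C)$ does close the argument. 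With this one-line correction (and the analogous splitting for the $\|\curl^2 H\|_{L^2}^3$ term), your proof goes through.
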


\begin{proof} As we have done in the proof of Lemma \ref{lem-f-td}, multiplying \eqref{CMHD1}$_2$ by $u_{t}$ and integrating over $\Omega$, using \eqref{key1},  Sobolev's and Young's inequalities leads to
\begin{align}\label{bw2}
&\quad \left(\frac{\lambda+2\mu}{2}\int(\div  u )^{2}dx+\frac{\mu}{2}\int|\curl  u |^{2}dx-\int(P-{P}_{\infty}-\frac{|H|^2}{2})\div  u dx\right)_t+\int\rho|\dot{u}|^{2}dx \nonumber\\
& =\left(\int(H \cdot\nabla H)\cdot udx\right)_t+\int\rho\dot{u}\cdot(u\cdot\nabla  u )dx-\int P_t\div  u dx \nonumber\\
&\quad-\int (H \cdot\nabla H- \nabla H \cdot H)_t \cdot udx \nonumber\\
 & \triangleq \frac{d}{dt}L_0+L_1+L_2+L_3.
\end{align}
By \eqref{g1} and \eqref{h-l3}, we have
\begin{align}\label{bw2-0}
L_0\leq C\|H\|_{L^3}\|\nabla H\|_{L^2}\|u\|_{L^6}\leq \frac{\mu}{4}\|\nabla u\|_{L^2}^2+C C_0^{\frac{2}{27}}\|\nabla H\|_{L^2}^2.
\end{align}
Using Lemma \ref{lem-gn} and \eqref{key1} yields
\begin{align}\label{bw2-1}
L_1 &=\int\rho\dot{u}\cdot(u\cdot\nabla u)dx \nonumber\\
 &\leq C\|\rho^{\frac{1}{2}}\dot{u}\|_{L^2}\|\rho^{1/3}u\|_{L^3}\|\nabla u\|_{L^6}\nonumber \\
 &\leq C_4C^{\frac{1}{27}}\|\rho^{\frac{1}{2}}\dot{u}\|_{L^2}^2+C(\|\nabla u\|_{L^2}^2\!+\!\|P\!-\!{P}_{\infty}\|_{L^6}^2\!+\!\|P\!-\!{P}_{\infty}\|_{L^2}^{2}).
\end{align}
Next, by \eqref{CMHD1}$_1$, \eqref{flux}, \eqref{key1}, Lemma \ref{lem-f-td}, Sobolev’s and Young’s inequalities leads to
\begin{align}\label{bw2-2}
&L_2=- \frac{1}{\lambda+2\mu}\int(P-{P}_{\infty})(F \div u+\nabla F \cdot u ) dx  \nonumber \\
& \quad - \frac{1}{2(\lambda+2\mu)}\int(P-{P}_{\infty})^{2}\div u dx +\gamma\int P(\div u)^2dx \nonumber\\
& \leq C(\|\nabla u\|_{L^2}\|F \|_{L^2}+\|P-{P}_{\infty}\|_{L^3}\|\nabla F \|_{L^2}\|u\|_{L^6})
+\|P-{P}_{\infty}\|_{L^2}\|\nabla u\|_{L^2}+\|\nabla u\|_{L^2}^2)\nonumber\\
&\leq C\|\nabla u\|_{L^2}(\|\sqrt{\rho}\dot{u}\|_{L^2}+\|\nabla  u \|_{L^2}+\|P-{P}_{\infty}\|_{L^2}+C_0^{\frac{1}{27}}(\|\curl^2  H\|_{L^2}+\|\nabla H\|_{L^2}))\nonumber\\ 
&\leq \frac{1}{4}\|\sqrt{\rho}\dot{u}\|_{L^2}^{2}
\!+\!C(\|\nabla  u \|_{L^2}^2\!+\!\|P\!-\!{P}_{\infty}\|_{L^2}^{2}+C_0^{\frac{2}{27}}(\|\curl^2  H\|_{L^2}^2+\|\nabla H\|_{L^2}^2)).
\end{align}
Using Lemma \ref{lem-gn} and \eqref{tdh}, a directly computation yields
\begin{align}\label{bw2-3}
L_3 &=-\int (H_t \cdot\nabla H- \nabla H \cdot H_t) \cdot u dx-\int (H \cdot\nabla H_t- \nabla H_t \cdot H) \cdot u dx \nonumber\\
 &\leq C(\|H_t\|_{L^2}\|\nabla H\|_{L^3}\|\nabla u \|_{L^2}+\|H_t\|_{L^2}\|H\|_{L^3}\|\nabla u \|_{L^6}) \nonumber \\
 &\leq C_5C_0^{\frac{1}{27}}(\|\rho^{\frac{1}{2}}\dot{u }\|_{L^2}^2+\|\nabla u \|_{L^2}^2+\|H_t\|_{L^2}^2+\|\curl^2  H\|_{L^2}^2+\|\nabla H\|_{L^2}^2+\|P\!-\!{P}_{\infty}\|_{L^6}^{2})\nonumber \\
 &\quad +C\|H_t\|_{L^2}^2+C(\|\nabla H\|_{L^2}^2+\|\curl^2  H\|_{L^2}\|\nabla H\|_{L^2})\|\nabla u \|_{L^2}^2.
\end{align}
Putting \eqref{bw2-0}-\eqref{bw2-3} into \eqref{bw2}, 
we obtain
\begin{align}\label{bw22}
\displaystyle  & \quad\left((\lambda+2\mu)\|\div  u \|_{L^{2}}^{2}+\mu\|\curl  u \|_{L^{2}}^{2}-2\int(P-{P}_{\infty}-\frac{|H|^2}{2})\div  u dx\right)_t
+\int\rho|\dot{u}|^{2}dx \nonumber \\
 & \le (\frac{\mu}{4}\|\nabla u\|_{L^2}^2+C C_0^{\frac{2}{27}}\|\nabla H\|_{L^2}^2)_t+C(1+\|\nabla H\|_{L^2}^2+\|\curl^2  H\|_{L^2}\|\nabla H\|_{L^2})\|\nabla u \|_{L^2}^2\nonumber\\&\quad+C\left(\!\|P\!-\!{P}_{\infty}\|_{L^6}^2\!+\!\|P\!-\!{P}_{\infty}\|_{L^2}^{2}+\|H_t\|_{L^2}^2+\|\curl^2  H\|_{L^2}^2+\|\nabla H\|_{L^2}^2\right) ,
\end{align}
provide that $C_0<\hat{\varepsilon}_1\triangleq(4C_4+4C_5)^{-27}$.
By Gronwall's inequality, \eqref{tdh-4} and Lemmas \ref{lem-vn}, \ref{lem-basic}, one has
\begin{align}\label{bw23}
\displaystyle  \sup_{0\le t\le \si(T)}\|\nabla  u \|_{L^{2}}^{2}+\int_0^{\sigma(T)}\int\rho|\dot{u}|^{2}dxdt\leq C (\|\nabla u_0\|_{L^2}^2+\|\nabla H_0\|_{L^2}^2)+CC_0^{1/3},
\end{align}
which yields \eqref{uv1}.

It remains to prove \eqref{uv2}.  Taking $m=2-s$ in \eqref{J01}, \eqref{ht4},  and integrating over $(0,\sigma(T)]$ instead of $(0,T]$, in a similar way as we have gotten \eqref{a20}, we obtain
\begin{align}\label{bu2}
&\sup_{0\le t\le \sigma(T)}\sigma (\|\sqrt{\rho}\dot{u}\|_{L^2}^2+\|H_t\|_{L^2}^2)+\int_0^{\sigma(T)}\sigma (\|\nabla\dot{u}\|_{L^2}^2+\|\nabla H_t\|_{L^2}^2)dt \nonumber \\
&\leq C\int_0^{\sigma(T)}\sigma  \|\nabla u\|^4_{L^4}dt+ \tilde{C} ,
\end{align}
where we have taken advantage of \eqref{h2xd1} and \eqref{uv1}.
Furthermore, by \eqref{tdu2}, \eqref{h-tdh} and \eqref{uv1}, for $s\in (1/2,1]$, we have
\begin{align}\label{bu3}
&\quad\int_0^{\sigma(T)}t \|\nabla u\|_{L^{4}}^{4}dt \nonumber \\
& \le C\int_0^{\sigma(T)}t (\|\sqrt{\rho}\dot{u}\|_{L^{2}}^{3}+\|\curl^2  H\|_{L^2}^3+\|P-{P}_{\infty}\|_{L^{6}}^3+\|\nabla H\|_{L^2}^3)\|\nabla u\|_{L^{2}}dt \nonumber \\
& \quad +C\int_0^{\sigma(T)}t \|\nabla u\|_{L^{2}}^{4}dt \nonumber \\
& \le C+ C\int_0^{\sigma(T)}t^{-\frac{1}{4}}(t^{\frac{1}{4}}\|\nabla u\|_{L^2}^2)^{\frac{1}{2}}(t^{\frac{1}{4}}\|\sqrt{\rho}\dot{u}\|_{L^{2}}^{2})^{\frac{1}{2}}(t \|\sqrt{\rho}\dot{u}\|_{L^2}^2)dt \nonumber \\
& \quad+ C\int_0^{\sigma(T)}t^{-\frac{1}{4}}(t^{\frac{1}{4}}\|\nabla u\|_{L^2}^2)^{\frac{1}{2}}(t^{\frac{1}{4}}\|\curl^2  H\|_{L^{2}}^{2})^{\frac{1}{2}}(t \|\curl^2  H\|_{L^2}^2)dt\nonumber \\
& \le CC_0^{\frac{1}{9}}\sup_{0\le t\le \sigma(T)}(t (\|\sqrt{\rho}\dot{u}\|_{L^{2}}^{2}+\|\curl^2  H\|_{L^{2}}^{2}))+C.
\end{align}
Besides, from \eqref{h2xd1} and \eqref{uv1}, we have
\begin{align}\label{h2xd3}
& \quad \sup_{0\leq t\leq \sigma(T)}t  \|\curl^2  H\|_{L^2}^2
\leq  C\sup_{0\leq t\leq \sigma(T)} t \|H_t\|_{L^2}^2 +\tilde{C}.
\end{align}
Then combining this with \eqref{bu2} and \eqref{bu3}, we have
\begin{align}\label{bu4}
&\sup_{0\le t\le \sigma(T)}\sigma (\|\sqrt{\rho}\dot{u}\|_{L^2}^2+\|H_t\|_{L^2}^2)+\int_0^{\sigma(T)}\sigma (\|\nabla\dot{u}\|_{L^2}^2+\|\nabla H_t\|_{L^2}^2)dt \nonumber \\
&\leq C_6C_0^{\frac{1}{9}}\sup_{0\le t\le \sigma(T)}\sigma (\|\sqrt{\rho}\dot{u}\|_{L^2}^2+\|H_t\|_{L^2}^2)+ \tilde{C} ,
\end{align}
Therefore, if we choose $C_0$ to be such that $C_0\leq \ve_3 \triangleq \min\{\hat{\ve}_1, (2C_6)^{-9}\}$, \eqref{bu4} and \eqref{h2xd3} implies \eqref{uv2}. The proof of Lemma \ref{lem-a1} is completed.
\end{proof}

\begin{lemma}\label{lem-a3}
Under the conditions of Proposition \ref{pr1}. Then there exists a positive constant  $\varepsilon_4$   depending only on $\mu ,  \lambda , \nu,  \ga ,  a ,  {\rho}_{\infty}, \bar{\rho}$, $M_1$ and $M_2$ such that if $C_0<\varepsilon_4$,
\begin{align}\label{ba3}
\displaystyle  A_4(\sigma(T))+A_5(\sigma(T))\leq C_0^{\frac{1}{9}}.
\end{align}
\end{lemma}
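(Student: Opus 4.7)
The target is to improve the bootstrap assumption $A_4(\sigma(T))+A_5(\sigma(T))\le 2C_0^{1/9}$ from \eqref{key1} to $\le C_0^{1/9}$ by producing, for each piece, a bound of the form $CC_0^{\alpha}$ with $\alpha>1/9$ and then using smallness of $C_0$ to absorb the implicit constants (which may depend on $\bar\rho, M_1, M_2$). The two quantities are treated separately and summed.

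For $A_4(\sigma(T))$, the plan is to revisit the differential inequalities \eqref{I01} (for $\|\nabla u\|_{L^2}^2$ and $\|\sqrt\rho\dot u\|_{L^2}^2$) and \eqref{ht3} (for $\|H_t\|_{L^2}^2$, $\|\nabla H\|_{L^2}^2$) with weight $\sigma^m$ taken with $m=1/4$, sum them, and integrate over $(0,\sigma(T)]$. The prefactor $m\sigma^{m-1}\sigma'$ is integrable near $t=0$ and, combined with the short-time bounds of Lemma \ref{lem-a1} on $\sigma\|\sqrt\rho\dot u\|_{L^2}^2$, $\sigma\|H_t\|_{L^2}^2$ and $\int_0^{\sigma(T)}\|\sqrt\rho\dot u\|_{L^2}^2\,dt$, contributes at most $\tilde C$. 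The delicate term is $\int_0^{\sigma(T)}\sigma^{1/4}\|\nabla u\|_{L^4}^4\,dt$, handled by the interpolation \eqref{tdu2} as in \eqref{bu3}: we trade one factor of $\|\sqrt\rho\dot u\|_{L^2}^2$ (respectively $\|\curl^2 H\|_{L^2}^2$, reached through \eqref{h2xd1}) against the $\tilde C$-bound from Lemma \ref{lem-a1}, leaving an overall factor $\sup_{[0,\sigma(T)]}\sigma^{1/4}\|\nabla u\|_{L^2}^2 \le CC_0^{1/2}$ from the $A_1(T)$-assumption in \eqref{key1}. After collecting terms and using \eqref{h-l3} to tame the magnetic contributions, we expect $A_4(\sigma(T))\le C(C_0+C_0^{1/4})$, which is $\le\tfrac12 C_0^{1/9}$ for $C_0$ small enough.

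For $A_5(\sigma(T))$, the plan is to test $\eqref{CMHD1}_2$ componentwise against $3|u|u$. Using $\eqref{CMHD1}_1$ and $u\cdot n=0$, the convective and time-derivative terms combine into a perfect derivative and the boundary integral from $\div(\rho u|u|^3)$ vanishes, yielding
\begin{align*}
\frac{d}{dt}\int\rho|u|^3\,dx = 3\int\rho\dot u\cdot|u|u\,dx.
\end{align*}
Substituting \eqref{2m2}, integrating by parts, and handling the resulting boundary terms via Lemma \ref{lem-be} (using $u\cdot n=0$, $\omega\times n=0$ and $H\cdot n=0$), the right-hand side is dominated by $C(\|F\|_{L^6}+\|\omega\|_{L^6}+\|H\|_{L^3}\|\nabla H\|_{L^6})\|\nabla u\|_{L^2}\|u\|_{L^6}$. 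Lemma \ref{lem-f-td} gives $\|F\|_{L^6}+\|\omega\|_{L^6}\le C(\|\sqrt\rho\dot u\|_{L^2}+\|\curl^2 H\|_{L^2}+\|\nabla H\|_{L^2}+\|\nabla u\|_{L^2})$, and \eqref{h-l3} supplies $\|H\|_{L^3}\le CC_0^{1/27}$. Integrating in time and bounding the right-hand side with Lemmas \ref{lem-basic}, \ref{lem-a1} together with \eqref{basic3} yields $\sup_{[0,\sigma(T)]}\int\rho|u|^3\,dx \le \|\rho_0^{1/3}u_0\|_{L^3}^3 + CC_0^{\gamma}$ for some $\gamma>1/9$; the initial piece is bounded by Gagliardo--Nirenberg as $\|\rho_0^{1/3}u_0\|_{L^3}^3\le C\|\sqrt{\rho_0}u_0\|_{L^2}^{3/2}\|\nabla u_0\|_{L^2}^{3/2}\le CC_0^{3/4}$. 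Combining the two estimates and choosing $\varepsilon_4\le\varepsilon_3$ sufficiently small gives \eqref{ba3}.

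The main obstacle is the bookkeeping of powers of $C_0$: both bounds must leave a surplus $C_0^{\alpha-1/9}$ that absorbs the $\tilde C$-constants from Lemma \ref{lem-a1}, which are themselves $C_0$-independent but depend on $M_1,M_2$. The interpolation of $\int_0^{\sigma(T)}\sigma^{1/4}\|\nabla u\|_{L^4}^4\,dt$ is the touchiest step, where the saving gain must be extracted from the factors of $\|\nabla u\|_{L^2}^2$ or $\|\nabla H\|_{L^2}^2$ that are small by the $A_1(T)$-assumption, while the slip-boundary contributions are controlled uniformly via Lemma \ref{lem-be}.
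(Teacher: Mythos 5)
Your treatment of $A_5(\sigma(T))$ is essentially the paper's own argument (test $\eqref{CMHD1}_2$ against $3|u|u$, use $u\cdot n=0$ to kill the flux boundary term, interpolate the resulting right-hand side, and bound $\int\rho_0|u_0|^3dx$ by Gagliardo--Nirenberg); apart from a H\"older-exponent slip ($\|F\|_{L^6}\|\nabla u\|_{L^2}\|u\|_{L^6}$ has exponents summing to $5/6$, not $1$ --- you need $\|F\|_{L^3}\|u\|_{L^6}\|\nabla u\|_{L^2}$ or the paper's variant \eqref{ba33}), that half is fine.

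The $A_4(\sigma(T))$ half has a genuine gap. You propose to re-run the differential inequalities \eqref{I01} and \eqref{ht3} with weight $\sigma^{1/4}$, and you concede that the terms generated by the time derivative of the weight, $m\sigma^{m-1}\sigma'(\cdots)$, together with the short-time bounds of Lemma \ref{lem-a1}, ``contribute at most $\tilde C$.'' That concession is fatal: $\tilde C$ depends on $M_1,M_2$ but not on $C_0$, so a contribution of size $\tilde C$ can never be absorbed into $C_0^{1/9}$ by shrinking $C_0$, and your subsequent claim $A_4(\sigma(T))\le C(C_0+C_0^{1/4})$ does not follow from the steps you describe. To close your route you would have to extract a positive power of $C_0$ from each such term, e.g.
\begin{equation*}
\int_0^{\sigma(T)}\sigma^{-3/4}\|\nabla u\|_{L^2}^2\,dt\le \Big(\int_0^1\sigma^{-3p/4}dt\Big)^{1/p}\Big(\sup_{[0,\sigma(T)]}\|\nabla u\|_{L^2}^2\Big)^{1-1/p'}\Big(\int_0^{\sigma(T)}\|\nabla u\|_{L^2}^2dt\Big)^{1/p'}\le C\tilde C^{\,1-1/p'}C_0^{1/p'}
\end{equation*}
with $p<4/3$, and likewise for the magnetic terms; none of this appears in your plan. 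The paper avoids the whole issue: since $\sigma^{1/4}X=X^{3/4}(\sigma X)^{1/4}$, it bounds each supremum in $A_4$ by $(\sup X)^{3/4}(\sup\sigma X)^{1/4}$ and each time integral by $(\int X\,dt)^{3/4}(\int\sigma X\,dt)^{1/4}$ via H\"older, where the unweighted factors are $\le\tilde C$ by Lemma \ref{lem-a1} and \eqref{tdh-4}, and the $\sigma$-weighted factors are controlled by $A_1(T)\le 2C_0^{1/2}$ from \eqref{key1}. This gives $A_4(\sigma(T))\le CA_1(T)^{1/4}\le CC_0^{1/8}\le C_0^{1/9}$ in two lines, with no new energy estimate. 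You should either adopt that interpolation or supply the missing $C_0$-extraction for every $\tilde C$-sized term in your weighted energy identity.
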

\begin{proof} 
We begin with the estimate on $A_4(\sigma(T))$. Using \eqref{uv1}, we have
\begin{align}\label{ba4}
A_4({\sigma(T)}) \leq & \sup_{0\le t\le\sigma(T)} ( \|\nabla u\|\ltwo)^{\frac{3}{4}} \sup_{0\le t\le\sigma(T)} (t\|\nabla u\|\ltwo)^{\frac{1}{4}} \nonumber \\
& +\sup_{0\le t\le\sigma(T)} ( \|\nabla H\|\ltwo)^{\frac{3}{4}} \sup_{0\le t\le\sigma(T)} (t\|\nabla H\|\ltwo)^{\frac{1}{4}} \nonumber \\
& +\left(\int_0^{\sigma(T)}  \|\rho^{\frac{1}{2}}\dot{u}\|\ltwo dt \right)^{\frac{3}{4}} \left(\int_0^{\sigma(T)} t\|\rho^{\frac{1}{2}}\dot{u}\|\ltwo dt \right)^{\frac{1}{4}} \nonumber \\
& +\left(\int_0^{\sigma(T)}  \|\curl^2  H\|\ltwo dt \right)^{\frac{3}{4}} \left(\int_0^{\sigma(T)} t\|\curl^2  H\|\ltwo dt \right)^{\frac{1}{4}} \nonumber \\
\leq & CA_1(T)^{\frac{1}{4}}\leq CC_0^{\frac{1}{8}}\leq C_0^{\frac{1}{9}}.
\end{align}

Next, it remains to estimate $A_5({\sigma(T)})$. 
Multiplying $\eqref{CMHD}_2$ by $3|u|u$, and integrating over $ \O$ leads  to
\begin{align}\label{ba33}
\left(\int\rho|u|^{3}dx\right)_t 
&\leq C\int|u||\nabla u|^{2}dx+C\int|P-{P}_{\infty}||u||\nabla u|dx+C\int|H||\nabla H||u|^2dx \nonumber \\
&\leq C\|\nabla u\|_{L^2}^{\frac{5}{2}}(\|\rho\dot{u}\|_{L^2}^{\frac{1}{2}}+\|P-P_\infty\|_{L^2}^{\frac{1}{2}}+\|\curl^2  H\|_{L^2}^{\frac{1}{2}}+\|\nabla H\|_{L^2}^{\frac{1}{2}})\nonumber \\
&\quad +C\|\nabla u\|_{L^2}^3+CC_0^{\frac{1}{6}}\|\nabla u\|_{L^2}^2+C(\|\nabla H\|_{L^2}^4+\|\nabla u\|_{L^2}^4).
\end{align}
Hence, integrating \eqref{ba33} over $(0,\sigma(T))$ and using \eqref{key1}, \eqref{basic3}, we get
\begin{align}\label{ba34}
\sup_{0\le t\le  \si(T) }\int\rho|u|^{3}dx 
&\leq C\int_0^{\sigma(T)}(t^{\frac{1}{4}}\|\nabla u\|_{L^2}^2)^{\frac{5}{4}} (t^\frac{1}{4}(\|\rho\dot{u}\|_{L^2}^2+\|\curl^2  H\|_{L^2}^2))^{\frac{1}{4}} t^{-\frac{3}{8}} dt \nonumber \\
&\quad + C\int_0^{\sigma(T)}(t^{\frac{1}{4}}\|\nabla u\|_{L^2}^2)^{\frac{5}{4}} (t^\frac{1}{4}\|\nabla H\|_{L^2}^2)^{\frac{1}{4}} t^{-\frac{3}{8}} dt \nonumber \\
&\quad + CC_0+CC_0^{\frac{2}{9}}+\int\rho_0|u_0|^3dx \nonumber \\
&\leq CC_0^{\frac{1}{6}}+\int\rho_0|u_0|^3dx \leq C_7C_0^{\frac{1}{6}},
\end{align}
where we have used the fact 
\begin{align}\label{ba35}
 \displaystyle \int\rho_0|u_0|^{3}dx\leq C\|\rho_0^{\frac{1}{2}}u_0\|_{L^{2}}^{\frac32}\|\nabla u_0\|_{L^2}^{\frac32}\leq CC_0^{\frac32}.
 \end{align}
Finally, set $\varepsilon_4\triangleq\min\{\varepsilon_3,(C_7)^{-18}\}$, we get $A_5({\sigma(T)})\leq C_0^{\frac{1}{9}}$.
The proof of Lemma \ref{lem-a3} is completed.
\end{proof}

\begin{lemma}\label{lem-a1a2} Under the conditions of Proposition \ref{pr1}. Then there exists a positive constant $\varepsilon_5$ depending only  on $\mu,$  $\lambda,$ $\nu,$ $\gamma,$ $a$,   ${\rho}_{\infty}$, $\bar{\rho}$, $M_1$ and $ M_2$  such that
 \begin{align}\label{a1a2}
 \displaystyle  A_1(T)+A_2(T)\le C_0^{\frac{1}{2}},
 \end{align}
provided $C_0\leq\varepsilon_5$.
\end{lemma}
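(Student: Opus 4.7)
The plan is to combine the conclusions of Lemmas~\ref{lem-tdh} and \ref{lem-a0}, which immediately yield
\[
A_1(T)+A_2(T)\le C\,C_0^{31/54}+C\int_0^T\sigma\|\nabla u\|_{L^3}^3\,dt+C\int_0^T\sigma^2\|\nabla u\|_{L^4}^4\,dt.
\]
Since $31/54>1/2$, it suffices to dominate each of the two remaining time integrals by $C\,C_0^{1/2+\delta}$ for some $\delta>0$, up to an absorbable term of the form $\eta(C_0)(A_1(T)+A_2(T))$ with $\eta(C_0)\to 0$ as $C_0\to 0$.

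To do so, I apply the interpolation bound \eqref{tdu2} with $p=3$ and $p=4$, writing $\|\nabla u\|_{L^p}^p$ as the product of $\|\nabla u\|_{L^2}^{(6-p)/2}$ with a $(3p-6)/2$-power of $\|\rho\dot u\|_{L^2}+\|H\cdot\nabla H\|_{L^2}+\|P-P_\infty\|_{L^6}+\||H|^2\|_{L^6}$, plus a harmless $\|\nabla u\|_{L^2}^p$ remainder. The pressure factor is already $O(C_0^{1/6})$ by \eqref{basic2}; the two magnetic factors inherit a small $C_0^{1/27}$ prefactor from $\|H\|_{L^3}\le CC_0^{1/27}$ (which follows from $A_3\le 2C_0^{1/9}$): for $\|H\cdot\nabla H\|_{L^2}$ this is exactly \eqref{h-tdh}, while for $\||H|^2\|_{L^6}$ a Gagliardo--Nirenberg interpolation combined with Lemma~\ref{lem-high} furnishes $\||H|^2\|_{L^6}\le C\|H\|_{L^3}^{\alpha}(\|\curl^2 H\|_{L^2}+\|\nabla H\|_{L^2})^{1-\alpha}\cdot\|\nabla H\|_{L^2}^{\beta}$ with the same type of $C_0^{1/27}$ saving.

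Next I split $\int_0^T=\int_0^{\sigma(T)}+\int_{\sigma(T)}^T$. On the short-time piece I apply H\"older in time with $\sigma^\alpha$-weights so that the resulting factors are absorbed by $A_4(\sigma(T))+A_5(\sigma(T))\le 2C_0^{1/9}$ and by $\sup_{[0,\sigma(T)]}t(\|\sqrt\rho\dot u\|_{L^2}^2+\|\curl^2 H\|_{L^2}^2)\le\tilde C$ from Lemma~\ref{lem-a1}, the residual being a convergent $\sigma^{-\gamma}$-integral with $\gamma<1$. On the long-time piece $[\sigma(T),T]$ (where $\sigma\equiv 1$), I exploit $\int_0^T(\|\sqrt\rho\dot u\|_{L^2}^2+\|\curl^2 H\|_{L^2}^2+\|H_t\|_{L^2}^2)\,dt\le 2C_0^{1/2}$ from $A_1(T)\le 2C_0^{1/2}$ together with $\int_0^T(\|\nabla u\|_{L^2}^4+\|\nabla H\|_{L^2}^4)\,dt\le CC_0^{2/9}$ from Remark~\ref{rem-4}.

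The main obstacle is the bookkeeping: one must choose the H\"older exponents so that simultaneously every piece of both the $L^3$- and the $L^4$-integrals ends with a net positive power of $C_0$ exceeding $1/2$. Concretely, the unavoidable factors $\|\sqrt\rho\dot u\|_{L^2}$ and $\|\curl^2 H\|_{L^2}$ (which could otherwise feed back into $A_1+A_2$) must always appear multiplied either by a prefactor of the form $C_0^{1/27}$ or $C_0^{1/6}$, or by a time-weight that places them inside a norm already bounded by $A_1$ or $A_2$. Once this alignment is achieved, the lemma follows by choosing $C_0\le\varepsilon_5$ small enough so that the coefficient of $A_1+A_2$ on the right is below $1/2$ and the absolute residual lies below $C_0^{1/2}$.
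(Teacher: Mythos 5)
Your skeleton matches the paper's: combine Lemma \ref{lem-tdh} with Lemma \ref{lem-a0}, reduce $\int_0^T\sigma\|\nabla u\|_{L^3}^3dt$ to $\int_0^T\sigma^2\|\nabla u\|_{L^4}^4dt$ by interpolation, expand $\|\nabla u\|_{L^4}^4$ through \eqref{tdu2}, and split the time integral at $\sigma(T)$. Your treatment of the short-time piece and of the $\|\sqrt{\rho}\dot u\|_{L^2}$, $\|{\rm curl}^2H\|_{L^2}$, $\|\nabla H\|_{L^2}$ contributions on $[\sigma(T),T]$ is essentially the paper's argument. But there is a genuine gap in your handling of the pressure on the long-time piece. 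You assert that ``the pressure factor is already $O(C_0^{1/6})$ by \eqref{basic2}'' and treat this as sufficient. Pointwise-in-time smallness is not enough here: the term to control is
\begin{align*}
\int_{\sigma(T)}^{T}\|P-P_\infty\|_{L^6}^3\,\|\nabla u\|_{L^2}\,dt ,
\end{align*}
and on $[\sigma(T),T]$ there is no time weight to exploit, the interval has length up to $T-1\to\infty$, and $\|P-P_\infty\|_{L^6}$ does not decay. Any Young/H\"older splitting leaves you with $\int_{\sigma(T)}^{T}\|P-P_\infty\|_{L^q}^{q}\,dt\le CC_0\,(T-\sigma(T))$, which is not uniform in $T$; neither $A_1(T)\le 2C_0^{1/2}$ nor Remark \ref{rem-4} controls a time integral of a pure pressure quantity. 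This is precisely why the paper inserts an extra, structurally different step (\eqref{pp1}--\eqref{pp3}): it writes the renormalized continuity equation for $P-P_\infty$ in terms of the effective viscous flux $F$, multiplies by $3(P-P_\infty)^2$, and uses the damping term $\frac{\gamma}{2\mu+\lambda}(P-P_\infty)^4$ to obtain the $T$-uniform space-time bound $\int_0^T\sigma^2\|P-P_\infty\|_{L^4}^4\,dt\le CC_0^{11/18}$, which is then fed into \eqref{al4-h}. Without this (or an equivalent time-integrability estimate for the pressure), your bookkeeping cannot close, since the residual would grow linearly in $T$ rather than stay below $C_0^{1/2}$.
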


\begin{proof} By \eqref{g1} and \eqref{basic2}, one can check that
\begin{align}\label{al3}
\int_0^{T}\sigma \|\nabla u\|_{L^3}^3 dt
\le  C \int_0^{T}\sigma\|\nabla u\|_{L^2}\|\nabla u\|_{L^4}^2dt\le  C C_0+C \int_0^{T} \sigma^2\|\nabla u\|_{L^4}^4dt,
\end{align}
which, along with \eqref{a01} and \eqref{a02} gives
\begin{align}\label{a1a2-1}
\displaystyle  A_1(T)+A_2(T)\leq C(C_0^{\frac{31}{54}}+\int_0^T\sigma^2\|\nabla u\|_{L^{4}}^{4}dt).
\end{align}
So it reduces to estimate $\int_0^T\sigma^2\|\nabla u\|_{L^{4}}^{4}dt$.
On the one hand, by \eqref{tdu2}, \eqref{h-tdh} \eqref{key1} and Lemma \ref{lem-basic} again,  it indicates that
\begin{align}\label{al4-l}
&\quad\int_0^{\sigma(T)}t^{2}\|\nabla u\|_{L^{4}}^{4}dt \nonumber \\
& \le C\int_0^{\sigma(T)}t^{2}\big((\|\sqrt{\rho}\dot{u}\|_{L^{2}}^{3}\!+\|P\!-\!{P}_{\infty}\|_{L^{6}}^3\!+\|\nabla H\|_{L^2}^3\!+\|\curl^2  H\|_{L^2}^3)\|\nabla u\|_{L^{2}}\!+\|\nabla u\|_{L^{2}}^{4}\big)dt \nonumber \\
& \le C\int_0^{\sigma(T)}t^{-\frac{1}{4}}(t^{\frac{1}{4}}\|\nabla u\|_{L^2}^2)^{\frac{1}{2}}(t^{\frac{1}{4}}\|\sqrt{\rho}\dot{u}\|_{L^{2}}^{2})^{\frac{1}{2}}(t^{2}\|\sqrt{\rho}\dot{u}\|_{L^2}^2)dt+CC_0 \nonumber \\
& \quad+ C\int_0^{\sigma(T)}t^{-\frac{1}{4}}(t^{\frac{1}{4}}\|\nabla u\|_{L^2}^2)^{\frac{1}{2}}(t^{\frac{1}{4}}\|\curl^2  H\|_{L^{2}}^{2})^{\frac{1}{2}}(t^{2}\|\curl^2  H\|_{L^2}^2)dt\nonumber \\
& \le CC_0^{\frac{11}{18}}.
\end{align}
On the other hand, by \eqref{key1}, \eqref{basic2} and Lemma \ref{lem-a1}, we have
\begin{align}\label{al4-h}
&\quad\int_{\sigma(T)}^{T}\sigma^{2}\|\nabla u\|_{L^{4}}^{4}dt \nonumber \\
& \le C\int_{\sigma(T)}^{T}\sigma^{2}\big((\|\sqrt{\rho}\dot{u}\|_{L^{2}}^{3}\!+\|P\!-\!{P}_{\infty}\|_{L^{6}}^3\!+\|\nabla H\|_{L^2}^3\!+\|\curl^2  H\|_{L^2}^3)\|\nabla u\|_{L^{2}}\!+\|\nabla u\|_{L^{2}}^{4}\big)dt \nonumber \\
& \le CC_0+C\int_{\sigma(T)}^{T}\sigma^{2}\|P\!-\!{P}_{\infty}\|_{L^{4}}^4 dt.
\end{align}
Furthermore, it follows from $\eqref{CMHD}_1$ and \eqref{flux} that $P-P_\infty $ satisfies
\begin{align}\label{pp1}
&(P-P_\infty)_t+u\cdot\nabla (P-P_\infty)+\frac{\gamma}{2\mu+\lambda}(P-P_\infty)F \nonumber \\ 
 &\quad+\frac{\gamma}{2\mu+\lambda}(P-P_\infty)^2+\frac{\gamma}{2(2\mu+\lambda)}(P-P_\infty)|H|^2+\ga P_\infty{\rm div}u=0.
\end{align}
Multiplying \eqref{pp1} by $3 (P-P_\infty )^2$ and integrating over $\Omega,$ after using \eqref{f-lp}, we get
\begin{align}\label{pp2}
& \frac{3\gamma-1}{2\mu+\lambda}\|P-P_\infty \|_{L^4}^4 \nonumber \\ 
\leq&-\left(\|P-P_\infty \|_{L^3}^3\right)_t+\delta\|P-P_\infty \|_{L^4}^4+C\|F\|_{L^4}^4+C\|\nabla H\|_{L^2}^6+C\|\nabla u\|_{L^2}^2 \nonumber \\ 
\leq&-\left(\|P-P_\infty \|_{L^3}^3\right)_t+\delta\|P-P_\infty \|_{L^4}^4+C(\|\rho\dot{u}\|_{L^2}^3+\|\curl ^2H\|_{L^2}^3)(\|\nabla u\|_{L^2}+\|\nabla H\|_{L^2}) \nonumber \\ 
&+C(\|\rho\dot{u}\|_{L^2}^3+\|\curl ^2H\|_{L^2}^3)\|P-P_\infty \|_{L^2}+C\|\nabla H\|_{L^2}^6+C\|\nabla u\|_{L^2}^2. 
\end{align}
Multiplying \eqref{pp2} by $\si^2$, then integrating over $(0,T],$ and choosing $\delta$ suitably small, by \eqref{basic2},  \eqref{key1} and \eqref{basic3}, we obtain
\begin{align}\label{pp3}
&\int_0^{T} \sigma^2\|P-P_\infty \|_{L^4}^4 dt \nonumber \\ 
\le & C\sup_{0\le t\le T}\|P-P_\infty \|^3_{L^3}+C\int_0^{\si(T)}\|P-P_\infty \|^3_{L^3}dt \nonumber \\ 
&+C\int_0^{T} \sigma^2(\|\rho\dot{u}\|_{L^2}^3+\|\curl ^2H\|_{L^2}^3)(\|\nabla u\|_{L^2}+\|\nabla H\|_{L^2})dt \nonumber \\ 
 &+C\int_0^{T}\sigma^2(\|\rho\dot{u}\|_{L^2}^3+\|\curl ^2H\|_{L^2}^3)\|P-P_\infty \|_{L^2}dt+ CC_0^{\frac{13}{18}}+CC_0 \nonumber \\ 
\le &  C(\on)C_0^{\frac{11}{18}}. 
\end{align}
Combining \eqref{al4-l}, \eqref{al4-h} and \eqref{pp3}, it follows from \eqref{a1a2-1} that
\begin{align}\label{a1a2-2}
\displaystyle  A_1(T)+A_2(T)\leq C_{8}C_0^{\frac{11}{18}}.
\end{align}
Set $\varepsilon_5\triangleq\min\{\varepsilon_4,(C_{8}^{-9}\}$, then \eqref{a1a2} holds when $C_0<\varepsilon_5$. The proof of Lemma \ref{lem-a1a2} is completed.
\end{proof}

We now proceed to proof the uniform (in time) upper bound for the
density.

\begin{lemma}\label{lem-brho}
Under the conditions of Proposition \ref{pr1}. Then there exists a positive constant $\varepsilon_6$ depending only  on $\mu,$  $\lambda,$ $\nu,$ $\gamma,$ $a$, ${\rho}_{\infty}$, $\bar{\rho}$, $M_1$ and $ M_2$  such that
 \begin{align}\label{brho}
 \displaystyle  \sup_{0\le t\le T}\|\n(t)\|_{L^\infty}  \le
\frac{7\bar{\rho} }{4}  ,
 \end{align}
provided $C_0\le \ve_6. $
\end{lemma}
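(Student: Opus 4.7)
The plan is to apply the Zlotnik inequality (Lemma \ref{lem-z}) along Lagrangian trajectories of the velocity field. Let $X(t;x)$ denote the flow of $u$ starting at $x\in\Omega$, and set $y(t)\triangleq\rho(X(t;x),t)$. Using the continuity equation $\eqref{CMHD}_1$ and the definition \eqref{flux} of $F$, one finds
\begin{equation*}
y'(t)=g(y(t))+b'(t),\qquad y(0)=\rho_0(x)\leq\bar\rho,
\end{equation*}
where
\begin{equation*}
g(y)\triangleq-\frac{y(P(y)-P_\infty)}{\lambda+2\mu},\qquad b(t)\triangleq-\int_0^t\frac{\rho(F+|H|^2/2)(X(s),s)}{\lambda+2\mu}\,ds.
\end{equation*}
Since $\gamma>1$, we have $g\in C(\mathbb{R})$ and $g(\infty)=-\infty$, so Lemma \ref{lem-z} is applicable once the growth condition \eqref{a100} on $b$ is verified. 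Using \eqref{key1} to bound $\rho\leq 2\bar\rho$ yields
\begin{equation*}
b(t_2)-b(t_1)\leq\frac{2\bar\rho}{\lambda+2\mu}\int_{t_1}^{t_2}\bigl(\|F\|_{L^\infty}+\tfrac12\||H|^2\|_{L^\infty}\bigr)\,dt,
\end{equation*}
so it is enough to bound the right-hand side in the form $N_0+N_1(t_2-t_1)$.

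I would split into the short-time interval $[0,\sigma(T)]$ and the long-time interval $[\sigma(T),T]$. On $[0,\sigma(T)]$, the Gagliardo-Nirenberg inequality \eqref{g2} with $(q,r)=(2,6)$ gives $\|F\|_{L^\infty}\leq C\|F\|_{L^2}^{1/4}\|\nabla F\|_{L^6}^{3/4}$ and a parallel bound for $|H|^2$. Combining Lemma \ref{lem-f-td} (estimates \eqref{tdf1}, \eqref{f-lp}), the higher-order bound \eqref{tdhk}, Lemma \ref{lem-basic}, the short-time a priori estimate $A_4(\sigma(T))+A_5(\sigma(T))\leq C_0^{1/9}$ from Lemma \ref{lem-a3}, and Lemma \ref{lem-a1}, a H\"older-in-time argument against the singular weight $\sigma^{-3/8}$ (in the spirit of \eqref{h3-2}, \eqref{al4-l}, \eqref{ba34}) should yield $\int_0^{\sigma(T)}(\|F\|_{L^\infty}+\||H|^2\|_{L^\infty})\,dt\leq CC_0^{\alpha_0}$ for some $\alpha_0>0$. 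On $[\sigma(T),T]$, where $\sigma\equiv 1$, I would instead use $\|F\|_{L^\infty}\leq C\|F\|_{L^6}^{1/2}\|\nabla F\|_{L^6}^{1/2}$ together with Lemma \ref{lem-f-td}, the $L^4$ pressure estimate \eqref{pp3}, \eqref{tdhk}, the $A_3$ bound, and the global estimate $A_1(T)+A_2(T)\leq C_0^{1/2}$. A H\"older split of the form $\int_{t_1}^{t_2}\|F\|_{L^\infty}\,dt\leq\epsilon(t_2-t_1)+C(\epsilon)\int_{\sigma(T)}^T\|F\|_{L^\infty}^{4/3}\,dt$ (with the remainder absorbed into $C(\epsilon)C_0^{\alpha_1}$) then gives, for any $\epsilon>0$,
\begin{equation*}
\int_{t_1}^{t_2}\bigl(\|F\|_{L^\infty}+\||H|^2\|_{L^\infty}\bigr)dt\leq 2\epsilon(t_2-t_1)+C(\epsilon)C_0^{\alpha_1},\qquad \sigma(T)\leq t_1<t_2\leq T.
\end{equation*}

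Combining the two ranges, \eqref{a100} holds with $N_1=\frac{4\bar\rho\,\epsilon}{\lambda+2\mu}$ and $N_0=C(\epsilon)C_0^{\alpha}$ for some $\alpha>0$. Because $\bar\rho\geq\rho_\infty+1$, the quantity $|g(5\bar\rho/4)|=\frac{(5\bar\rho/4)[a(5\bar\rho/4)^\gamma-P_\infty]}{\lambda+2\mu}$ is a strictly positive constant depending only on $\gamma,a,\bar\rho,P_\infty,\mu,\lambda$. I therefore fix $\epsilon$ so small that $N_1\leq|g(5\bar\rho/4)|$, making $\overline{\zeta}=5\bar\rho/4$ a valid choice in \eqref{a101}. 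Lemma \ref{lem-z} then yields
\begin{equation*}
\rho(X(t;x),t)\leq\max\{\bar\rho,\tfrac{5\bar\rho}{4}\}+N_0=\tfrac{5\bar\rho}{4}+CC_0^{\alpha}\leq\tfrac{7\bar\rho}{4},
\end{equation*}
provided $C_0\leq\varepsilon_6$ for a suitable $\varepsilon_6>0$ depending only on the stated constants. Since every point of $\Omega\times[0,T]$ lies on a characteristic originating at $t=0$, this proves \eqref{brho}.

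\textbf{Main obstacle.} The delicate step is the long-time estimate for $\int_{t_1}^{t_2}\|F\|_{L^\infty}\,dt$: the coefficient of $(t_2-t_1)$ must be kept freely adjustable and controlled by a \emph{fixed} constant (so that $\overline{\zeta}$ stays strictly below $5\bar\rho/4$), while the remainder still vanishes with $C_0$. Matching H\"older exponents in time against the available bounds on $\|\nabla F\|_{L^6}$, $\|\nabla H\|_{W^{1,6}}$, and $\|P-P_\infty\|_{L^4}$ from \eqref{pp3} requires care, in particular because $\|\rho\dot u\|_{L^6}$ in Lemma \ref{lem-f-td} is controlled via Sobolev embedding through $\|\nabla\dot u\|_{L^2}$, for which only the weighted bound $\int_0^T\sigma^2\|\nabla\dot u\|_{L^2}^2\,dt\leq CC_0^{1/2}$ is at hand.
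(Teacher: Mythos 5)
Your strategy coincides with the paper's: rewrite the continuity equation as $D_t\rho=g(\rho)+b'(t)$ with $b'=-\frac{1}{\lambda+2\mu}\rho\,(F+|H|^2/2)$ along characteristics, verify the Zlotnik hypothesis \eqref{a100} separately on $[0,\sigma(T)]$ and on $[\sigma(T),T]$, and conclude with Lemma \ref{lem-z}. The short-time half is workable as you sketch it (the paper interpolates $\|F\|_{L^\infty}\le C\|F\|_{L^6}^{1/2}\|\nabla F\|_{L^6}^{1/2}$ rather than your $\|F\|_{L^2}^{1/4}\|\nabla F\|_{L^6}^{3/4}$, which routes the smallness more directly through $A_4$ via \eqref{f-lp}, but either choice can be pushed through), and your single application of Lemma \ref{lem-z} with $\overline{\zeta}=5\bar\rho/4$ versus the paper's two-stage application (first $N_1=0$, $\overline{\zeta}=\rho_\infty$, then $\overline{\zeta}=3\bar\rho/2$) is an immaterial difference.

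The genuine gap sits exactly at the step you flag as delicate. The pointwise Young inequality $\|F\|_{L^\infty}\le\epsilon+C(\epsilon)\|F\|_{L^\infty}^{4/3}$ is fine, but the remainder $\int_{\sigma(T)}^T\|F\|_{L^\infty}^{4/3}\,dt$ cannot be bounded by $C(\epsilon)C_0^{\alpha_1}$ \emph{uniformly in $T$}. On $[\sigma(T),T]$, where $\sigma\equiv1$, the available bounds ($A_1(T)+A_2(T)\le 2C_0^{1/2}$ together with \eqref{f-lp} and \eqref{tdf1} at $p=6$ and \eqref{h-tdh}) give $\|F\|_{L^6}\in L^2_t$ and $\|\nabla F\|_{L^6}\in L^2_t$ with norms of size $CC_0^{1/4}$, hence $\|F\|_{L^\infty}\le C\|F\|_{L^6}^{1/2}\|\nabla F\|_{L^6}^{1/2}\in L^2_t$ — but nothing better than $L^2_t$ is available, and on an interval of unbounded length $L^2_t$ does not control $L^{4/3}_t$: H\"older only yields $\int_{\sigma(T)}^T\|F\|_{L^\infty}^{4/3}dt\le (T-\sigma(T))^{1/3}\bigl(\int_{\sigma(T)}^T\|F\|_{L^\infty}^{2}dt\bigr)^{2/3}$, which grows with $T$ and destroys exactly the uniform-in-time conclusion the lemma is after. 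The repair is to take the Young exponent at least $2$; for instance $\|F\|_{L^\infty}\le\epsilon+C(\epsilon)\|F\|_{L^\infty}^{2}$ with $\int_{\sigma(T)}^T\|F\|_{L^\infty}^{2}dt\le C\bigl(\int\|F\|_{L^6}^{2}dt\bigr)^{1/2}\bigl(\int\|\nabla F\|_{L^6}^{2}dt\bigr)^{1/2}\le CC_0^{1/2}$, or the paper's choice of exponent $8/3$, which leads to $\|F\|_{L^6}^{4/3}\|\nabla F\|_{L^6}^{4/3}\le C\|F\|_{L^6}^{4}+C\|\nabla F\|_{L^6}^{2}$, both time-integrable with a positive power of $C_0$. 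With that single replacement the rest of your argument closes.
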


\begin{proof}
First, the equation of  mass conservation $\eqref{CMHD}_1$ can be equivalently rewritten in the form
\begin{align}\label{rho1}
\displaystyle  D_t \n=g(\rho)+b'(t),
\end{align}
where
\begin{align*}
 \displaystyle D_t\rho\triangleq\rho_t+u \cdot\nabla \rho ,\,
g(\rho)\triangleq-\frac{\rho(P-{P}_{\infty})}{2\mu+\lambda},\,
 b(t)\triangleq-\frac{1}{2\mu+\lambda} \int_0^t\rho (F+\frac{|H|^2}{2})dt.
 \end{align*}
Naturally, we shall prove our conclusion by Lemma \ref{lem-z}. It is sufficient to check that the function $b(t)$ must verify \eqref{a100} with some suitable constants $N_0$, $N_1$.

For $t\in[0,\sigma(T)],$ one deduces from \eqref{g1}, \eqref{g2}, \eqref{tdf1}, \eqref{tdxd-u1}, \eqref{key1} and Lemmas \ref{lem-basic}, \ref{lem-a1} that for $\delta_0$ as in Proposition \ref{pr1} and for all $0\leq t_1\leq t_2\leq\sigma(T)$,
\begin{align}\label{bl1}
&\quad |b(t_2)-b(t_1)| =\frac{1}{\lambda+2\mu}\left|\int_{t_1}^{t_2}\rho (F+\frac{|H|^2}{2})dt\right|\nonumber\\
&\le C\int_0^{\sigma(T)}(\|F\|_{L^{\infty}}+ \|H\|^2_{L^{\infty}})dt \nonumber\\
& \le C\int_0^{\sigma(T)}\|F\|_{L^{6}}^{\frac{1}{2}}\|\nabla F\|_{L^{6}}^{\frac{1}{2}}dt+C\int_0^{\sigma(T)}\|H\|_{L^{6}}\|\nabla H\|_{L^{6}} dt \nonumber\\
& \leq C\int_0^{\sigma(T)}(\|\sqrt{\rho}\dot{u}\|_{L^2}^{\frac{1}{2}}+\|\curl^2  H\|_{L^2}^{\frac{1}{2}})\|\nabla \dot{u}\|_{L^{2}}^{\frac{1}{2}}dt\nonumber\\
& \quad + C\int_0^{\sigma(T)}(\|\sqrt{\rho}\dot{u}\|_{L^2}^{\frac{1}{2}}+\|\curl^2  H\|_{L^2}^{\frac{1}{2}})\|\nabla H\|_{L^{2}}^{\frac{1}{4}}\|\curl^2  H\|_{L^{2}}^{\frac{3}{4}}dt\nonumber\\
& \quad + C\int_0^{\sigma(T)}(\|\sqrt{\rho}\dot{u}\|_{L^2}^{\frac{1}{2}}+\|\curl^2  H\|_{L^2}^{\frac{1}{2}})\|\nabla H\|_{L^{2}}dt\nonumber\\
& \quad + C\int_0^{\sigma(T)}(\|\nabla H\|_{L^2}^{\frac{1}{2}}\|\nabla \dot{u}\|_{L^{2}}^{\frac{1}{2}}+\|\nabla H\|_{L^{2}}^{\frac{3}{4}}\|\curl^2  H\|_{L^{2}}^{\frac{3}{4}}+\|\nabla H\|_{L^2}^{\frac{3}{2}})dt\nonumber\\
& \quad + C\int_0^{\sigma(T)}\|\nabla H\|_{L^2}(\|\nabla H\|_{L^2}+\|\curl^2  H\|_{L^2})dt \triangleq \sum_{i=1}^5 B_i.
\end{align}
We have to estimate $B_i, i=1,2,\cdots,5$ one by one. A directly computation gives
\begin{align}\label{bl-b1}
B_1 &\leq C\int_0^{\sigma(T)}\big(t^{\frac{1}{4}}(\|\sqrt{\rho}\dot{u}\|_{L^2}^2+\|\curl^2  H\|_{L^2}^2)\big)^{\frac{1}{4}}\big(t \|\nabla \dot{u}\|_{L^{2}}^2\big)^{\frac{1}{4}}t^{-\frac{5}{16}}dt\nonumber\\
&\leq CC_0^{\frac{1}{36}},
\end{align}
similarly,
\begin{align}
B_2 &\leq C\int_0^{\sigma(T)}\big(t (\|\sqrt{\rho}\dot{u}\|_{L^2}^2+\|\curl^2  H\|_{L^2}^2)\big)^{\frac{1}{4}}\big(t^{\frac{1}{4}}\|\nabla H\|_{L^{2}}^2\big)^{\frac{1}{8}}\nonumber \\
 & \qquad \times\big(t^{\frac{1}{4}}\|\curl^2  H\|_{L^{2}}^2\big)^{\frac{3}{8}}t^{-\frac{3}{8}}dt
 \leq CC_0^{\frac{1}{36}},\label{bl-b2}\\
B_3 &\leq C\int_0^{\sigma(T)}\big(t^{\frac{1}{4}}(\|\sqrt{\rho}\dot{u}\|_{L^2}^2+\|\curl^2  H\|_{L^2}^2)\big)^{\frac{1}{4}}\big(t^{\frac{1}{4}}\|\nabla H\|_{L^{2}}^2\big)^{\frac{1}{2}}t^{-\frac{3}{8}}dt\nonumber \\
 & \leq CC_0^{\frac{1}{12}},\label{bl-b3}\\
B_4 &\leq C\int_0^{\sigma(T)}\|\nabla H\|_{L^2}^{\frac{1}{2}}\big(t \|\nabla \dot{u}\|_{L^{2}}^2\big)^{\frac{1}{4}}t^{-\frac{1}{4}}dt\nonumber\\
&\quad +C\int_0^{\sigma(T)}\big(t^{\frac{1}{4}}\|\nabla H\|_{L^{2}}^2\big)^{\frac{3}{8}}\big(t^{\frac{1}{4}}\|\curl^2  H\|_{L^{2}}^2\big)^{\frac{3}{8}}t^{-\frac{3}{16}}dt \nonumber \\
 & \quad + C\int_0^{\sigma(T)}\big(t^{\frac{1}{4}}\|\nabla H\|_{L^2}^{2}\big)^{\frac{3}{4}}t^{-\frac{3}{16}}dt\leq CC_0^{\frac{1}{12}},\label{bl-b4}\\
B_5 &\leq \int_0^{\sigma(T)}\big(t^{\frac{1}{4}}\|\nabla H\|_{L^{2}}^2\big)^{\frac{1}{2}}\big(t^{\frac{1}{4}}\|\curl^2  H\|_{L^{2}}^2\big)^{\frac{1}{2}}t^{-\frac{1}{4}}dtdt+CC_0 \nonumber \\&
\leq CC_0^{\frac{5}{9}}.\label{bl-b8}
\end{align}
Putting \eqref{bl-b1}-\eqref{bl-b8} into \eqref{bl1}, we have
\begin{align}\label{bl2}
|b(t_2)-b(t_1)| \leq C_{9}C_0^{\frac{1}{36}}.
\end{align}
Combining \eqref{bl2} with \eqref{rho1} and choosing $N_1=0$, $N_0=C_{9}C_0^{\frac{1}{36}}$, $\bar{\zeta}={\rho}_{\infty}$ in Lemma \ref{lem-z} give
\begin{align}\label{rho2}
\displaystyle  \sup_{t\in [0,\si(T)]}\|\rho\|_{L^\infty} \le \bar{\rho}
+C_{9}C_0^{\frac{1}{36}} \le\frac{3\bar{\rho}}{2},
\end{align}
provided $C_0\le \hat{\ve}_6\triangleq\min\{\varepsilon_5, \left(\frac{\bar{\rho}}{2C_{9}}\right)^{36}\}. $

On the other hand, for $t\in[\sigma(T),T],\,\,\sigma(T)\le t_1\le t_2\le T ,$ it follows from \eqref{tdf1}, \eqref{key1}, and Lemma \ref{lem-basic} that
\begin{align}\label{br1}
& \quad |b(t_2)-b(t_1)| \le C\int_{t_1}^{t_2}(\|F\|_{L^{\infty}}+\|H\|^2_{L^{\infty}})dt \nonumber\\
&\le \frac{a}{\lambda+2\mu}(t_2-t_1)+C\int_{t_1}^{t_2}\|F\|_{L^{\infty}}^{8/3}dt +C\int_{t_1}^{t_2}\|H\|_{L^{\infty}}^2dt \nonumber\\
& \le \frac{a}{\lambda+2\mu}(t_2-t_1)+CC_0^{\frac{1}{6}}\int_{\sigma(T)}^{T}(\|\nabla \dot{u}\|_{L^2}^{2}+\|\nabla H\|_{L^2}\|\curl^2  H\|_{L^2}^3+\|\nabla H\|_{L^2}^4)dt\nonumber\\
&\quad +CC_0 +C\int_{t_1}^{t_2}(\|\nabla H\|_{L^2}\|\curl^2  H\|_{L^2}+\|\nabla H\|_{L^2}^2)dt \nonumber \\
& \le \frac{a}{\lambda+2\mu}(t_2-t_1)+C_{10}C_0^{2/3}.
\end{align}
Now we choose $N_0=C_{10}C_0^{2/3}$, $N_1=\frac{a}{\lambda+2\mu}$ in \eqref{a100} and set $\bar\zeta= \frac{3\bar{\rho}}{2}$ in \eqref{a101}. Since for all $  \zeta \geq\bar{\zeta}=\frac{3\bar{\rho}}{2}>{\rho}_{\infty}+1$,
$$ g(\zeta)=-\frac{ a\zeta}{2\mu+\lambda}(\zeta^{\gamma}-{\rho}_{\infty}^{\gamma})\le -\frac{a}{\lambda+2\mu}= -N_1. $$
Together with \eqref{rho1} and \eqref{br1}, by Lemma \ref{lem-z}, we have
\begin{align}\label{rho3}
\displaystyle \sup_{t\in
[\si(T),T]}\|\rho\|_{L^\infty}\le \frac{ 3\hat \rho }{2} +C_{10}C_0^{2/3} \le
\frac{7\hat \rho }{4} ,
\end{align}
provided $C_0\le \ve_6 \triangleq\min\{\hat{\ve}_6, (\frac{ \hat \n }{4C_{10}})^{3/2}\}$.
The combination of \eqref{rho2} with \eqref{rho3} completes the
proof of Lemma \ref{lem-brho}.
\end{proof}

\subsection{\label{se4} Time-dependent higher order estimates }
In this subsection, we derive the time-dependent higher order estimates, which are necessary for the global existence of classical solutions. The procedure is similar as that in \cite{cl2021,lxz2013,lx2016}, and we sketch it here for completeness.
From now on, assume that the initial energy $C_0\leq \ve_6$, and the positive constant $C $ may depend on $T,$ $\mu$, $\lambda$, $\nu$, $a$, $\ga$, ${\rho}_{\infty},$ $\bar{\rho},$   $\Omega$, $M_1, M_2$,  $\|\na u_0\|_{H^1}, \|\na H_0\|_{H^1}, \|\n_0-{\rho}_{\infty}\|_{W^{2,q}},$ $\|P(\n_0)-{P}_{\infty}\|_{W^{2,q}}$,  $\| g\|_{L^2}$ for $q\in(3,6)$ where $g\in L^2(\Omega)$ is given by compatibility condition \eqref{dt3}.

\begin{lemma}\label{lem-x1}
 There exists a positive constant $C,$ such that
\begin{align}
&\sup_{0\le t\le T}(\|\nabla u\|_{L^2}^2+\|\nabla H\|_{L^2}^2)+\int_0^T(\|\sqrt{\rho}\dot{u}\|_{L^2}^2+\| H_t\|_{L^2}^2+\|\nabla^2 H\|_{L^2}^2)dt\leq C,\label{x1b1}\\
&\sup_{0\le t\le T}(\|\sqrt{\rho}\dot{u}\|_{L^2}^2+\| H_t\|_{L^2}^2+\|\nabla^2 H\|_{L^2}^2)+\int_0^T(\|\nabla\dot{u}\|_{L^2}^{2}+\|\nabla H_t\|_{L^2}^2)dt\leq C,\label{x1b2}\\
&\sup_{0\le t\le T}(\|\nabla\rho\|_{L^6 \cap L^2}+\|\nabla u\|_{H^1})+\int_0^T(\|\nabla u\|_{L^\infty}+\|\nabla^{2} u\|_{L^6}^{2})dt\leq C.\label{x2b1}
\end{align}
\end{lemma}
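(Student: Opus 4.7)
The three bounds \eqref{x1b1}--\eqref{x2b1} are time-dependent upgrades of the lower-order estimates of Section \ref{se3-1}, plus a classical Beale--Kato--Majda closure for the density gradient. I will prove them in the stated order, using the compatibility condition \eqref{dt3} to absorb the singular weights $\sigma^m$ that appear in Lemmas \ref{lem-tdh}--\ref{lem-a0}. Throughout, $C$ is allowed to depend on $T$ and on the full initial norms listed in the preamble of this subsection.

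\textbf{Step 1 (proof of \eqref{x1b1}).} I will rerun the derivation of \eqref{a01} with $m=0$, i.e.\ I will test $\eqref{CMHD1}_2$ against $\dot u$ and $\eqref{CMHD1}_3$ against $\curl^2 H$ (and additionally $H$ against $H_t$) exactly as in the proof of Lemma \ref{lem-tdh}, but keep every $\sigma^m$ replaced by $1$. The only new ingredient is the initial datum for $\sqrt{\rho}\dot u$: from \eqref{dt3} and $\rho \dot u = \mu\Delta u + (\mu+\lambda)\nabla\div u + (\nabla\times H)\times H - \nabla P$ one reads $\sqrt{\rho_0}\,\dot u_0 = g \in L^2$, so the boundary terms at $t=0$ in the analogs of \eqref{I1}--\eqref{I4} are finite. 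Combining with the estimate \eqref{tdh-3} for $\curl H$ and the already proven upper bound $\rho\le 2\bar\rho$, I close the resulting differential inequality by Gronwall, invoking \eqref{tdu2} and \eqref{al3} to bound $\int_0^T\|\nabla u\|_{L^3}^3\,dt$ in terms of $\int_0^T\|\sqrt\rho\dot u\|_{L^2}^2\,dt$ with small coefficient (this is where we use $C_0\le \varepsilon_6$).

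\textbf{Step 2 (proof of \eqref{x1b2}).} This is the analog of Lemma \ref{lem-a0} without the weight. I will apply the operator $\dot u^{\,j}(\partial_t + \div(u\,\cdot))$ to the $j$-th component of \eqref{2m2}, together with the time-differentiated magnetic equation \eqref{ht} tested by $H_t$, exactly as in the derivation of \eqref{J01} and \eqref{ht3}, but with $m=0$. Again, the compatibility condition \eqref{dt3} furnishes $\|\sqrt{\rho_0}\dot u_0\|_{L^2}$ and hence a finite initial value for the energy $\|\sqrt\rho\dot u\|_{L^2}^2 + \|H_t\|_{L^2}^2$ at $t=0$. Using \eqref{x1b1} already in hand, the right-hand side of the resulting identity is integrable in $t$; Gronwall then yields \eqref{x1b2}. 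The bound on $\|\nabla^2 H\|_{L^2}$ follows from the elliptic identity \eqref{h2xd1}.

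\textbf{Step 3 (proof of \eqref{x2b1}).} Apply $\nabla$ to $\rho_t + \div(\rho u)=0$, dot with $|\nabla\rho|^{p-2}\nabla\rho$ for $p\in\{2,6\}$, and integrate by parts to obtain
\begin{equation*}
\frac{d}{dt}\|\nabla\rho\|_{L^p} \;\le\; C\,\|\nabla u\|_{L^\infty}\|\nabla\rho\|_{L^p} + C\,\|\nabla^2 u\|_{L^p}.
\end{equation*}
From Lemma \ref{lem-high} applied to $u$, the flux identity \eqref{flux}, and Step 2, I bound $\|\nabla^2 u\|_{L^p}$ (for $p=2,6$) by $\|\rho\dot u\|_{L^p} + \|H\cdot\nabla H\|_{L^p} + \|\nabla P\|_{L^p} + (\text{controlled terms})$, the dangerous piece being $\|\nabla P\|_{L^p} \sim \|\nabla\rho\|_{L^p}$. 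To control $\|\nabla u\|_{L^\infty}$, I invoke the BKM-type inequality (Lemma \ref{lem-bkm}): $\|\nabla u\|_{L^\infty} \le C(\|\div u\|_{L^\infty} + \|\omega\|_{L^\infty})\ln(e+\|\nabla^2 u\|_{L^q}) + C\|\nabla u\|_{L^2} + C$, where $\|\div u\|_{L^\infty} + \|\omega\|_{L^\infty}$ is estimated via Gagliardo--Nirenberg and Lemma \ref{lem-f-td} by a time-integrable quantity (using Step 2). Setting $y(t) = e + \|\nabla\rho\|_{L^2\cap L^6}$, one arrives at $y'(t) \le C\Phi(t)\,y(t)\ln y(t)$ with $\Phi\in L^1(0,T)$, and Gronwall on $\ln y$ gives $\sup_{[0,T]}\|\nabla\rho\|_{L^2\cap L^6} \le C$. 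The two time integrals $\int_0^T\|\nabla u\|_{L^\infty}\,dt$ and $\int_0^T\|\nabla^2 u\|_{L^6}^2\,dt$ then follow directly, and $\|\nabla u\|_{H^1}$ follows from Lemma \ref{lem-high} with $p=2$.

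The main obstacle is the logarithmic closure in Step 3: we need to arrange so that the coefficient of $\ln y$ is integrable in time, which forces us to carefully separate the contribution of $\nabla P$ inside $\|\nabla^2 u\|_{L^q}$ (absorbable because it only enters through the log) from the parts controlled in Step 2 (integrable in $t$). Steps 1 and 2 are essentially copies of Lemmas \ref{lem-tdh}--\ref{lem-a0} with the trivial modification $\sigma^m \leadsto 1$, so the only subtle point there is the use of the compatibility condition at $t=0$.
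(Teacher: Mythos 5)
Your proposal is correct and follows essentially the same route as the paper: unweighted ($m=0$) versions of the weighted energy identities from Lemmas \ref{lem-tdh}--\ref{lem-a0}, with the compatibility condition \eqref{dt3} supplying the initial value of $\|\sqrt{\rho}\dot u\|_{L^2}$, followed by the transport equation for $|\nabla\rho|^p$ combined with the effective-viscous-flux estimate \eqref{tdf1} and the Beale--Kato--Majda inequality to close a logarithmic Gronwall argument for $\|\nabla\rho\|_{L^6}$. The only cosmetic difference is in Step 1, where the paper simply assembles \eqref{x1b1} from the already-established bounds \eqref{tdh-4}, \eqref{uv1} and \eqref{tdhk} rather than re-running the $m=0$ energy identity, and where the compatibility condition is in fact only needed at the second level (your Step 2).
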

\begin{proof}
First, combining \eqref{tdh-4} and \eqref{uv1} along with \eqref{tdhk} gives \eqref{x1b1}.
Then choosing $m=0$ in \eqref{J01} and \eqref{ht3}, integrating them over $(0,T)$, by \eqref{J0b1}, \eqref{x1b1} and the compatibility condition \eqref{dt3}, we have
\begin{align}\label{x1b4}
\displaystyle  &\quad\sup_{0\le t\le T}(\|\sqrt{\rho}\dot{u}\|_{L^2}^2+\| H_t\|_{L^2}^2+\|\nabla^2 H\|_{L^2}^2)+\int_0^T(\|\nabla\dot{u}\|_{L^2}^{2}+\|\nabla H_t\|_{L^2}^2)dt \nonumber\\
& \leq C+C\int_0^T (\|\sqrt{\rho}\dot{u}\|_{L^2}^3+\|\nabla^2 H\|_{L^2}^3+\|\nabla H\|_{L^2}^4\|\nabla^2 H\|_{L^2}^2) dt \nonumber \\
&\leq C+\frac{1}{2}\sup_{0\le t\le T}(\|\sqrt{\rho}\dot{u}\|_{L^2}^2+\|\nabla^2 H\|_{L^2}^2),
\end{align}
where we have also used Lemma \ref{lem-gn}, Lemma \ref{lem-f-td}, \eqref{h-tdh} and \eqref{h2xd1}, then we deduce \eqref{x1b2} from \eqref{x1b4}.

Next we want to prove \eqref{x2b1}. For $2\leq p\leq 6 ,$ $|\nabla\rho|^p$ satisfies
\begin{align*}
& (|\nabla\rho|^p)_t + \text{div}(|\nabla\rho|^pu)+ (p-1)|\nabla\rho|^p\text{div}u  \\
 &+ p|\nabla\rho|^{p-2}(\nabla\rho)^{\ast} \nabla u (\nabla\rho) +
p\rho|\nabla\rho|^{p-2}\nabla\rho\cdot\nabla\text{div}u = 0. 	
 \end{align*} 
Thus, by \eqref{tdf1}, it follows
\begin{align}\label{x2b3}
 (\|\nabla\rho\|_{L^p})_t 
&\le C(1+\norm[L^{\infty}]{\nabla u} )\norm[L^p]{\nabla\rho} +C\|\nabla F\|_{L^p}+C\|\nabla H \cdot H\|_{L^p} \nonumber\\
&\le C(1+\|\nabla u\|_{L^{\infty}})\|\nabla\rho\|_{L^p}+C\|\rho\dot{u}\|_{L^p}+C\|H\|_{L^{\infty}}\|\nabla H\|_{L^p}.
\end{align}
We deduce from Gagliardo-Nirenberg's inequality, \eqref{tdf1}, \eqref{tdxd-u1} and \eqref{x2b1} that
\begin{align}\label{x2b4}
\displaystyle  & \quad\|\div u\|_{L^\infty}+\|\omega\|_{L^\infty}\nonumber\\
&\le C(\|F\|_{L^\infty}+\|P-\bar{P}\|_{L^\infty}+\|H\|^2_{L^\infty})+\|\omega\|_{L^\infty} \nonumber\\
&\le C(\|F\|_{L^2}+\|\nabla F\|_{L^6}+\|\omega\|_{L^2}+\|\nabla \omega\|_{L^6}+\|P-\bar{P}\|_{L^\infty}+\|H\|_{L^{6}}\|\nabla H\|_{L^6}) \nonumber\\
&\le C(\|\nabla\dot{u}\|_{L^2}+1),
\end{align}
which together with Lemma \ref{lem-bkm} and \eqref{2tdu} indicates that
\begin{align}\label{x2b5}
\displaystyle \|\na u\|_{L^\infty } &\le C\left(\|{\rm div}u\|_{L^\infty }+
\|\omega\|_{L^\infty } \right)\ln(e+\|\na^2 u\|_{L^6 }) +C\|\na
u\|_{L^2} +C \nonumber\\
&\le C(1+\|\nabla\dot{u}\|_{L^2})\ln(e+\|\nabla\dot u\|_{L^2 } +\|\na \rho\|_{L^6}) \nonumber\\
&\le C(1+\|\nabla\dot{u}\|_{L^2}^2)+C(1+\|\nabla\dot{u}\|_{L^2})\ln(e+\|\nabla\rho\|_{L^6}) . 
\end{align}

Consequently, taking $p=6$ in \eqref{x2b3} leads to
\begin{align*}
(e+\|\nabla\rho\|_{L^6})_t\leq C[1+\|\nabla\dot{u}\|_{L^2}^2+(1+\|\nabla\dot{u}\|_{L^2})\ln(e+\|\nabla\rho\|_{L^6})](e+\|\nabla\rho\|_{L^6}),
\end{align*}
which can be rewritten as
\begin{align}\label{x2b7}
\displaystyle  \left(\ln(e+\|\nabla\rho\|_{L^6})\right)_t\leq C(1+\|\nabla\dot{u}\|_{L^2}^{2})+C(1+\|\nabla\dot{u}\|_{L^2})\ln(e+\|\nabla\rho\|_{L^6}).
\end{align}
By Gronwall's inequality and \eqref{x1b2}, we derive
\begin{align}\label{x2b8}
\displaystyle  \sup_{0\leq t\leq T}\|\nabla\rho\|_{L^{6}}\leq C .
\end{align}
Furthermore, by \eqref{x2b5} and \eqref{2tdu}, together with \eqref{x1b1} and \eqref{x1b2}, we have
\begin{align}\label{x2b9}
\displaystyle  \int_0^T\|\nabla u\|_{L^\infty}dt\leq C.
\end{align}
Similarly, taking $p=2$ in \eqref{x2b3}, by Gronwall's inequality, together with \eqref{x1b2} and \eqref{x2b9}, we obtain that
\begin{equation}\label{x2b10}
\displaystyle  \sup_{0\leq t\leq T}\|\nabla\rho\|_{L^{2}}\leq C.
\end{equation}
Moreover, combining \eqref{x1b1}, \eqref{x1b2}, \eqref{x2b8}, \eqref{x2b9} and \eqref{x2b10} yields
 \begin{align}\label{x2b11}
\displaystyle  \int_0^T\|\nabla^{2} u\|_{L^6}^{2}dt\leq C\, ,\quad\sup_{0\leq t\leq T}\|u\|_{H^{2}}\leq C.
\end{align}
This finishes the proof of Lemma \ref{lem-x1}.
\end{proof}

\begin{lemma}\label{lem-x3}
There exists a positive constant $C$ such that
\begin{align}
& \sup_{0\le t\le T}\|\sqrt{\rho}u_t\|_{L^2}^2 + \ia\int|\nabla u_t|^2dxdt\le C, \label{x3b}\\
&\sup_{0\le t\le T}(\|{\rho- {\rho}_{\infty}}\|_{H^2} +  \|{P- {P}_{\infty}}\|_{H^2})\le C,\label{x3bb}\\
& \sup\limits_{0\le t\le T}\left(
 \|{P\!-\!\! {P}_{\infty}}\|_{H^2}\!+\!
   \|\rho_t\|_{H^1}\!+\!\|P_t\|_{H^1}\right)
    \!+\!\! \int_0^T\!\!\left(\|\n_{tt}\|_{L^2}^2\!+\!\|P_{tt}\|_{L^2}^2\right)dt
\le C, \label{x4b} \\
& \sup\limits_{0\le t\le T}\sigma (\|\nabla u_t\|_{L^2}^2+\|\nabla H_t\|_{L^2}^2)
    + \int_0^T\sigma(\|\sqrt{\rho}u_{tt}\|_{L^2}^2+\|H_{tt}\|_{L^2}^2)dt
\le C.\label{x4bb}
\end{align}
\end{lemma}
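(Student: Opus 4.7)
\textbf{Proof proposal for Lemma \ref{lem-x3}.}

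The plan is to derive the four estimates in order, leveraging Lemma \ref{lem-x1} and the preceding a priori bounds. For \eqref{x3b}, I would exploit the identity $u_t=\dot u-u\cdot\nabla u$. The pointwise density bound $\rho\le 7\bar\rho/4$ plus $\|\sqrt\rho\dot u\|_{L^2}\le C$ from \eqref{x1b2} handle $\|\sqrt\rho u_t\|_{L^2}$ once one estimates $\|\sqrt\rho(u\cdot\nabla u)\|_{L^2}\le C\|u\|_{L^\infty}\|\nabla u\|_{L^2}$ via the $H^2$-bound \eqref{x2b1} and Sobolev embedding. For the time-integral piece, $\nabla u_t=\nabla\dot u-\nabla u\cdot\nabla u-u\cdot\nabla^2 u$, so $\|\nabla u_t\|_{L^2}^2\le C(\|\nabla\dot u\|_{L^2}^2+\|\nabla u\|_{L^4}^4+\|u\|_{L^\infty}^2\|\nabla^2 u\|_{L^2}^2)$; Gagliardo--Nirenberg together with \eqref{x1b2} and \eqref{x2b1} integrates in time to yield \eqref{x3b}.

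For \eqref{x3bb}, I would apply $\nabla^2$ to the continuity equation $\rho_t+u\cdot\nabla\rho+\rho\div u=0$, test against $\nabla^2(\rho-\rho_\infty)$ in $L^2$ (and similarly in $L^q$ for $W^{2,q}$ if needed), and obtain a Gronwall-type inequality whose driving factor is $\|\nabla u\|_{L^\infty}+\|\nabla^2 u\|_{L^6}$. The latter is controlled by \eqref{2tdu} applied with $p=6$: $\|\nabla^2 u\|_{L^6}\le C(\|\rho\dot u\|_{L^6}+\|\nabla P\|_{L^6}+\|\nabla H\cdot H\|_{L^6}+\cdots)$, so after integrating in time one uses \eqref{x1b2}, \eqref{x2b1} and Sobolev embedding to close the Gronwall argument. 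The analogous calculation for $P-P_\infty$ uses the transport equation \eqref{pp1}.

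The estimate \eqref{x4b} follows more directly. From $\rho_t=-\div(\rho u)$ and $P_t=-u\cdot\nabla P-\gamma P\div u$, the $H^1$-bounds on $\rho_t,P_t$ are immediate from \eqref{x3bb} and the $H^2$-bound on $u$. Differentiating in time, $\rho_{tt}=-u_t\cdot\nabla\rho-u\cdot\nabla\rho_t-\rho_t\div u-\rho\div u_t$, each factor of which is controlled in the appropriate space using \eqref{x3b}, \eqref{x3bb}, \eqref{x2b1} and Sobolev, giving the $L^2_tL^2_x$ bound on $\rho_{tt}$; the treatment of $P_{tt}$ is parallel.

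The main obstacle is \eqref{x4bb}, which demands a genuine energy estimate on the time-differentiated system. I would differentiate the momentum equation $\rho u_t+\rho u\cdot\nabla u+\nabla P-\mu\Delta u-(\mu+\lambda)\nabla\div u=H\cdot\nabla H-\nabla(|H|^2/2)$ in time, test the resulting equation for $(\rho u_t)_t$ against $\sigma u_{tt}$, and integrate by parts to obtain
\[
\Bigl(\tfrac{\sigma}{2}\!\!\int\!(\lambda\!+\!2\mu)(\div u_t)^2\!+\!\mu|\curl u_t|^2\,dx\Bigr)_t\!+\!\sigma\!\!\int\!\rho|u_{tt}|^2 dx\le\text{controllable terms},
\]
while in parallel differentiating \eqref{ht} in time and testing with $\sigma H_{tt}$ yields a companion identity for $\sigma\|\nabla H_t\|_{L^2}^2$ and $\int\sigma\|H_{tt}\|_{L^2}^2$. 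The delicate issues are: (i) boundary contributions from the slip condition, handled by variants of Lemma \ref{lem-be} applied to $F_t$, $u_t$, and $H_t$ (using $u_t\cdot n=0$ and $\curl u_t\times n=0$ after time-differentiating the boundary conditions); (ii) the cross-coupling between $u_{tt}$ and $H_{tt}$ through terms like $(H\cdot\nabla H)_t$, which must be absorbed on the left after exploiting the smallness provided by $C_0\le\varepsilon$; and (iii) control of $\nabla u_t$ from $\div u_t$, $\curl u_t$ via Lemma \ref{lem-vn} in the exterior setting, in the spirit of \eqref{tdudot}. Adding the two identities, choosing constants small, absorbing cross-terms, and invoking \eqref{x3b}--\eqref{x4b} together with Gronwall yields \eqref{x4bb}.
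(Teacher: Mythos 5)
Your strategy coincides with the paper's. For \eqref{x3b}--\eqref{x4b} the paper simply invokes the corresponding argument of \cite{cl2019}, which is the decomposition $u_t=\dot u-u\cdot\nabla u$ together with transport-equation Gronwall estimates that you outline; for \eqref{x4bb} the paper performs exactly the energy estimate you describe: differentiate $\eqref{CMHD}_{2,3}$ once in time, test with $2u_{tt}$ and $2H_{tt}$, multiply the resulting inequality by $\sigma$, and recover $\|\nabla u_t\|_{L^2}^2+\|\nabla H_t\|_{L^2}^2$ from $\div u_t$, $\curl u_t$, $\curl H_t$ via Lemma \ref{lem-vn} (this is \eqref{x4b6}).

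Two points need tightening. First, in \eqref{x3bb} the top-order term $\rho\,\nabla^2\div u$ produced by applying $\nabla^2$ to the continuity equation is \emph{not} controlled by $\|\nabla u\|_{L^\infty}+\|\nabla^2 u\|_{L^6}$: you need $\|\nabla^3 u\|_{L^2}$, which by the elliptic estimate \eqref{3tdu} reintroduces $\|P-P_\infty\|_{H^2}$ and $\||H|^2\|_{H^2}$ on the right. The Gronwall inequality must therefore be run for the coupled quantity $\|\nabla^2\rho\|_{L^2}+\|\nabla^2 P\|_{L^2}$, with time-integrable forcing of the form $1+\|\nabla\dot u\|_{L^2}+\|\nabla H_t\|_{L^2}$ supplied by \eqref{x1b2}; as literally stated, your Gronwall would not close. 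Second, in \eqref{x4bb} the boundary contributions you flag as delicate in fact vanish identically: since $\partial\Omega$ and $n$ are time-independent, $u_{tt}\cdot n=0$ and $\omega_t\times n=0$ on $\partial\Omega$, so integrating by parts against $u_{tt}$ produces no surface integrals and no analogue of Lemma \ref{lem-be} is needed (that machinery is only required when the test function is $\dot u$, whose normal trace equals $-u\cdot\nabla n\cdot u\neq 0$). Likewise the magnetic cross-terms are absorbed into $2\int(\rho|u_{tt}|^2+|H_{tt}|^2)dx$ by plain Cauchy--Schwarz, with no appeal to the smallness of $C_0$, since constants in this lemma are allowed to depend on $T$.
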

\begin{proof} Based on Lemma \ref{lem-x1}, \eqref{x3b}-\eqref{x4b} can be obtained by the same way as that in \cite{cl2019}.
It remains to prove \eqref{x4bb}. Introducing the function
$$K(t)=(\lambda+2\mu)\int(\div u_t)^{2}dx+\mu\int|\omega_t|^{2}dx +\nu \int|\curl H_t|^{2}dx.$$
Since $u_t\cdot n = 0, H_t \cdot n=0$ on $\partial\Omega$, by Lemma \ref{lem-vn}, we have
\begin{align}\label{x4b6}
\displaystyle  \|\nabla u_t\|_{L^2}^2+\|\nabla H_t\|_{L^2}^2\leq C(\Omega)K(t).
\end{align}
Differentiating  $\eqref{CMHD}_{2,3}$  with respect to $t,$
\begin{align}\label{utt}\rho u_{tt}\!\!-\!(\lambda\!+\!2 \mu)\nabla \div u_t\!+\!\mu \nabla \!\times\! \omega_t\!=\!-\!\nabla\! P_t\!-\!\rho_t u_t\!-\!(\rho u\! \cdot\! \nabla\! u)_t\!+\!(H\! \cdot\! \nabla\! H\!-\!\nabla |H|^2/2)_t, \end{align}
and
\begin{align}\label{htt} \quad H_{tt}-\nu \nabla \times \curl H_t =(H \cdot \nabla u-u \cdot \nabla H-H\div u)_t, \end{align}
then multiplying \eqref{utt} by $2u_{tt}$, multiplying \eqref{htt} $2H_{tt}$ respectively, we obtain
\begin{align}\label{x4b7}
\displaystyle &\quad\frac{d}{dt}K(t)+2\int(\rho|u_{tt}|^2+|H_{tt}|^2)dx \nonumber \\
&=\frac{d}{dt}\Big(-\int\rho_t|u_t|^{2}dx-2\int\rho_tu\cdot\nabla u\cdot u_tdx+2\int P_t\div u_tdx \nonumber \\
&\qquad\quad -\int (2(H \otimes H)_t : \nabla u_t-|H|^2_t\div u_t dx)\Big)\nonumber \\
&\quad +\int\rho_{tt}|u_t|^{2}dx + 2\int(\rho_tu\cdot\nabla u)_t\cdot u_tdx-2\int\rho (u\cdot\nabla u)_t\cdot u_{tt}dx \nonumber\\
&\quad - 2\int P_{tt}\div u_tdx+\int (2(H \otimes H)_{tt} : \nabla u_t-|H|^2_{tt}\div u_t) dx\nonumber\\
&\quad +2\int (H \cdot \nabla u-u \cdot \nabla H- H \div u)_t \cdot H_{tt} dx\nonumber\\
&\triangleq\frac{d}{dt}K_0 + \sum\limits_{i=1}^6 K_i .
\end{align}
Let us estimate $K_i$, $i=0,1,\cdots, 6.$
We conclude from $\eqref{CMHD}_1$, \eqref{x1b2}, \eqref{x2b1}, \eqref{x3b}, \eqref{x4b}, \eqref{x4b6} and Sobolev's, Poincar\'{e}'s  inequalities that
\begin{align}
K_0 & \le \left|\int{\rm div}(\rho u)\,|u_t|^2dx\right|+C\norm[L^3]{\rho_t}\| u\|_{L^\infty}\|\nabla u\|_{L^2}\norm[L^6]{u_t}+C\|P_t\|_{L^2}\|\nabla u_t\|_{L^2} \nonumber \\
&\quad + C \|H\|_{L^\infty}\|H_t\|_{L^2}\|\nabla u_t\|_{L^2} 
\le \frac{1}{2}K(t)+C,\label{x4k0}\\
K_1 &\leq \left|\int_{ }\rho_{tt}\, |u_t|^2 dx\right|= \left|\int_{ }\div(\rho u)_t\,|u_t|^2 dx\right|= 2\left|\int_{ }(\rho_tu + \rho u_t)\cdot\nabla u_t\cdot u_tdx\right|\label{x4k1}\nonumber\\
& \le C\|\nabla u_t\|_{L^2}^2 K(t)+C\|\nabla u_t\|_{L^2}^2+C,\\
K_2&+K_3+K_4 
 \le C\norm[L^2]{\rho_{tt}}^2 + C\norm[L^2]{\nabla u_t}^2+\norm[L^2]{\rho^{{\frac{1}{2}}}u_{tt}}^2 +C\norm[L^2]{P_{tt}}^2 +C,\label{x4k2} \\
K_5 
 &\leq \frac{1}{2}\|H_{tt}\|_{L^2}^2+C\|H_{t}\|_{L^2}^2K(t)+C(\|\nabla H_t\|_{L^2}^2+\|\nabla u_t\|_{L^2}^2),\label{x4k5}\\
K_6 
 &\leq \frac{1}{2}\|H_{tt}\|_{L^2}^2+C(\|\nabla H_t\|_{L^2}^2+\|\nabla u_t\|_{L^2}^2).\label{x4k6}
\end{align}
Consequently, multiplying \eqref {x4b7} by $\sigma$, together with \eqref{x4k1}-\eqref{x4k6}, we get
\begin{align}\label{x4b8}
\displaystyle  &\quad\frac{d}{dt}(\sigma K(t)-\sigma K_0)+\sigma\int(\rho|u_{tt}|^{2}+|H_{tt}|^2)dx \nonumber \\
&\le C(1+\|\nabla u_t\|_{L^2}^2)\sigma K(t)+C(1+\|\nabla u_t\|_{L^2}^2+\|\nabla H_t\|_{L^2}^2+\|\rho_{tt}\|_{L^2}^2+\|P_{tt}\|_{L^2}^2),
\end{align}
By Gronwall's inequality, \eqref{x1b2}, \eqref{x3b}, \eqref{x4b} and \eqref{x4k0}, we derive that
\begin{align}\label{x4b9}
\displaystyle \sup_{0\le t\le T}(\sigma K(t))+\int_0^T\sigma(\|\sqrt{\rho}u_{tt}\|_{L^2}^2+\|H_{tt}\|_{L^2}^2)dt\le C .
\end{align}
As a result, by \eqref{x4b6}, we get \eqref{x4bb}. This finishes the proof .
\end{proof}

\begin{lemma}\label{lem-x5}
There exists a positive constant $C$ so that for any $q\in(3,6),$
\begin{align}
&  \sup_{t\in[0,T]}\left(\|\rho- {\rho}_{\infty}\|_{W^{2,q}} +\|P-{P}_{\infty}\|_{W^{2,q}}\right)\le C,\label{x5b}\\
& \sup_{t\in[0,T]} \si (\|\nabla u\|_{H^2}^2+\|\nabla H\|_{H^2}^2)\nonumber \\
 & \qquad  +\int_0^T \left(\|\nabla u\|_{H^2}^2+\|\nabla H\|_{H^2}^2 +\|\na^2 u\|^{p_0}_{W^{1,q}}+\si\|\na u_t\|_{H^1}^2\right)dt\le C,\label{x5bb}
\end{align}
where $p_0=\frac{9q-6}{10q-12}\in(1,\frac{7}{6}).$
\end{lemma}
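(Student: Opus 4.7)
The plan is to close a bootstrap loop between the $W^{2,q}$ regularity of $(\rho-\rho_\infty,P-P_\infty)$ and the $W^{2,q}$-type regularity of $(\nabla u,\nabla H)$, using elliptic regularity for the Lamé-type system (the momentum equation) and for the div-curl system controlling $H$, together with Gronwall on transport equations for the density and pressure. The low integrability exponent $p_0<7/6$ for $\|\nabla^2 u\|_{W^{1,q}}$ is the telltale sign that one must balance the inevitable $t=0$ time singularity (from only having $\sigma\|\nabla u_t\|_{H^1}^2\in L^1$ near the origin) against the desired uniform-in-time norm.

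First, I would apply $\nabla^2$ to the continuity equation $(\rho-\rho_\infty)_t+u\cdot\nabla(\rho-\rho_\infty)+\rho\,\div u=0$ and differentiate the analogous equation for $P-P_\infty$. Pairing against $|\nabla^2(\rho-\rho_\infty)|^{q-2}\nabla^2(\rho-\rho_\infty)$ and integrating by parts the transport term produces
\[
\tfrac{d}{dt}\|\nabla^2(\rho-\rho_\infty)\|_{L^q}\le C(1+\|\nabla u\|_{L^\infty})\|\nabla^2(\rho-\rho_\infty)\|_{L^q}+C\|\nabla\rho\|_{L^\infty}\|\nabla^2 u\|_{L^q}+C\|\nabla^3 u\|_{L^q},
\]
and similarly for $P-P_\infty$. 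By Lemmas \ref{lem-x1}--\ref{lem-x3} the lower-order terms $\|\nabla\rho\|_{L^6\cap L^2}$, $\|\nabla u\|_{H^1}$ are already uniformly bounded, so the only missing input is the time-integrability of $\|\nabla u\|_{L^\infty}$ and $\|\nabla^3 u\|_{L^q}$. Next, for the velocity side I would use the Lamé identity $\mu\nabla\times\omega-\nabla((2\mu+\lambda)\div u)=-\rho\dot u+H\cdot\nabla H-\nabla(P-P_\infty)-\tfrac12\nabla|H|^2$ together with \eqref{3tdu} (which is precisely designed for this purpose) to get
\[
\|\nabla^2 u\|_{W^{1,q}}\le C\bigl(\|\rho\dot u\|_{W^{1,q}}+\|H\!\cdot\!\nabla H\|_{W^{1,q}}+\|P-P_\infty\|_{W^{2,q}}+\||H|^2\|_{W^{2,q}}+\text{lower-order}\bigr),
\]
and, symmetrically, control $\|\nabla H\|_{H^2}$ by applying Lemma \ref{lem-high} to $\curl^2 H$, using $H_t-\nu\,\curl^2 H=H\cdot\nabla u-u\cdot\nabla H-H\div u$ to trade derivatives on $H$ for time derivatives plus lower-order terms; the $H_t\in H^1$ bound already comes from Lemma \ref{lem-x3}.

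For the $\sigma$-weighted $\|\nabla u_t\|_{H^1}^2$ estimate, I would apply $\partial_t$ to the momentum equation and treat the resulting elliptic system for $u_t$ with slip boundary conditions via Lemma \ref{lem-high}, controlling the forcing $\rho u_{tt}+\rho_t u_t+(\rho u\cdot\nabla u)_t+\nabla P_t-(H\cdot\nabla H-\tfrac12\nabla|H|^2)_t$ in $L^2$; the $\sigma\|\sqrt\rho u_{tt}\|_{L^2}^2$ integral is already supplied by \eqref{x4bb}. The final piece, the $L^{p_0}_t W^{1,q}_x$ bound on $\nabla^2 u$, is the technical heart of the lemma and the main obstacle: one interpolates the pointwise-in-time estimate for $\|\nabla^2 u\|_{W^{1,q}}$ (which blows up like $\sigma^{-1/2}$ near $0$, due to $\|\nabla u_t\|_{L^2}\lesssim\sigma^{-1/2}$) against the $H^2$ bound, giving an integrable singularity exactly for $p_0<7/6$; the specific value $p_0=(9q-6)/(10q-12)$ should fall out of an $L^p$--$L^\infty$ interpolation between $\sigma\|\nabla^2 u\|_{W^{1,q}}\in L^\infty_t$ (from the just-derived pointwise bound) and $\|\nabla^2 u\|_{L^q}\in L^2_t$ (via Lemma \ref{lem-x1} plus elliptic regularity). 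Once $\int_0^T\|\nabla^3 u\|_{L^q}^{p_0}$ is finite, plugging back into the Gronwall inequality above closes the loop for $\|\rho-\rho_\infty\|_{W^{2,q}}$ and $\|P-P_\infty\|_{W^{2,q}}$, and feeds back into \eqref{3tdu} to finish the second display.
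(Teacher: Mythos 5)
Your overall strategy coincides with the paper's: a transport/Gronwall argument for $\|\nabla^2(\rho-\rho_\infty)\|_{L^q}$ and $\|\nabla^2(P-P_\infty)\|_{L^q}$ driven by $\|\nabla^2 u\|_{W^{1,q}}$, the elliptic estimate \eqref{3tdu} to reduce the latter to $\|\nabla(\rho\dot u)\|_{L^q}$ plus known quantities, Lemma \ref{lem-high} applied through the induction equation for $\|\nabla H\|_{H^2}$, the time-differentiated momentum equation with slip boundary conditions (fed by \eqref{x4bb}) for $\int_0^T\sigma\|\nabla u_t\|_{H^1}^2dt$, and a final Gronwall closure. All of these steps are carried out in essentially the same way in the paper's proof.

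The one step that does not survive as written is your derivation of $\int_0^T\|\nabla^2u\|_{W^{1,q}}^{p_0}dt\le C$. An interpolation between $\sigma\|\nabla^2u\|_{W^{1,q}}\in L^\infty_t$ and $\|\nabla^2u\|_{L^q}\in L^2_t$ cannot produce an $L^{p_0}_t$ bound on the full $W^{1,q}$ norm: spatial interpolation only yields intermediate norms, and the crude pointwise bound $\|\nabla^2u\|_{W^{1,q}}\lesssim\sigma^{-1}$ gives $\int_0^1\sigma^{-p_0}dt=\infty$ for every $p_0>1$. Likewise, a pure $\sigma^{-1/2}$ singularity would permit any $p_0<2$ and so does not explain the threshold. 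The actual mechanism (see \eqref{x5b7}--\eqref{x5b8}) is a Gagliardo--Nirenberg interpolation of $\|\nabla\dot u\|_{L^q}$ between $\|\nabla\dot u\|_{L^2}\lesssim\sigma^{-1/2}$ and $\|\nabla\dot u\|_{L^6}\lesssim\|\nabla u_t\|_{H^1}+\cdots$, which yields $\|\nabla(\rho\dot u)\|_{L^q}\lesssim\sigma^{-1/2}\bigl(\sigma\|\nabla u_t\|_{H^1}^2\bigr)^{3(q-2)/(4q)}+\|\nabla u\|_{H^2}+C$; H\"older in time against $\int_0^T\sigma\|\nabla u_t\|_{H^1}^2dt\le C$ then requires exactly $p_0<\tfrac{4q}{5q-6}$, a condition satisfied by $\tfrac{9q-6}{10q-12}$ precisely when $q<6$. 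Since your framework already contains all the needed inputs ($\|\nabla u_t\|_{L^2}\lesssim\sigma^{-1/2}$ and $\sigma\|\nabla u_t\|_{H^1}^2\in L^1_t$), this is a repairable imprecision rather than a structural flaw, but the interpolation you name would not close the estimate.
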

\begin{proof}
Let's start with \eqref{x5bb}.  By Lemma \ref{lem-x1} and Poincar\'{e}'s, Sobolev's inequalities, one can check that
\begin{align}\label{x5b1}
\|\nabla (\n \dot u) \|_{L^2}&\le \||\nabla \n ||  u_t|  \|_{L^2}\!+\! \|\n\nabla   u_t  \|_{L^2}\! +\! \||\nabla \n|| u||\nabla u| \|_{L^2}\! +\! \|\n|\nabla  u|^2\|_{L^2}\!+\! \|  \n |u || \nabla^2 u| \|_{L^2}\nonumber \\
 &\le C+C\| \nabla   u_t  \|_{L^2}.
\end{align}
Consequently, together with \eqref{x4b} and Lemma \ref{lem-x1}, it yields
\begin{align}\label{x5b2}
\|\nabla^2 u\|_{H^1} &\le C (\|\rho \dot u\|_{H^1}+\|H \cdot \nabla H\|_{H^1}+ \| P-{P}_{\infty}\|_{H^2}+ \| |H|^2\|_{H^2}+\|u\|_{L^2})\nonumber \\
 &\le C+C \|\na  u_t\|_{L^2}.
\end{align}
It then follows from \eqref{x5b2}, \eqref{x2b1}, \eqref{x3b} and \eqref{x4bb} that
\begin{align}\label{x5b3}
\displaystyle \sup\limits_{0\le
t\le T}\si\|\nabla  u\|_{H^2}^2+\ia \|\nabla  u\|_{H^2}^2dt \le
 C.
\end{align}
Next, from \eqref{CMHD1}$_3$, \eqref{tdhk}, it follows
\begin{align}\label{x5h1}
\|\nabla^2 H\|_{H^1} 
&\le C (\|H_t\|_{H^1}+\|u \cdot \nabla H\|_{H^1}+ \|H \cdot \nabla u\|_{H^1}+ \|H \div u\|_{H^1}+\|\nabla H\|_{L^2})\nonumber \\
 &\le C+C \|\nabla H_t\|_{L^2}.
\end{align}
Similarly, from \eqref{x5b1}, \eqref{x1b1} and \eqref{x2b1}, we obtain
\begin{align}\label{x5h2}
\displaystyle \sup\limits_{0\le
t\le T}\si\|\nabla  H\|_{H^2}^2+\ia \|\nabla  H\|_{H^2}^2dt \le
 C.
\end{align}
 Next, we deduce from Lemma \ref{lem-x1} and \eqref{x4b} that
\begin{align}\label{x5b4}
\displaystyle  \|\na^2u_t\|_{L^2}
&\le C(\|(\rho\dot{u})_t\|_{L^2}+\|\nabla P_t\|_{L^2}+\|((\nabla \times H)\times H)_t\|_{L^2}+\|u_t\|_{L^2}) \nonumber \\
&\le C\|\n^{\frac{1}{2}}  u_{tt}\|_{L^2}+C\|\nabla  u_t\|_{L^2} +C\|\nabla  H_t\|_{L^2}+C,
\end{align}
where in the first inequality, we have utilized the $L^p$-estimate for the following elliptic system
\begin{equation}\label{x5b5}
\begin{cases}
  \mu\Delta u_t+(\lambda+\mu)\nabla\div u_t=(\rho\dot{u})_t+\nabla P_t+((\nabla \times H)\times H)_t \,\,\, &\text{in} \,\,\Omega,\\
  u_t\cdot n=0\,\,\,\text{and} \,\,\,\omega_t\times n=0\,\,&\text{on} \,\,\partial\Omega.
\end{cases}	
\end{equation}
Together with \eqref{x5b4} and \eqref{x4bb} yields
\begin{align}\label{x5b6}
\displaystyle  \int_0^T\sigma\|\nabla u_t\|_{H^1}^2dt\leq C.
\end{align}

By Sobolev's inequality, \eqref{x2b1}, \eqref{x4b} and \eqref{x4bb}, we get for any $q\in (3,6)$,
\begin{align}\label{x5b7}
\displaystyle \|\na(\n\dot u)\|_{L^q}
&\le C \|\na \n\|_{L^q}(\|\nabla\dot{u}\|_{L^q}+\|\nabla\dot{u}\|_{L^2}+\|\nabla u\|_{L^2}^2)+C\|\na\dot u \|_{L^q}\nonumber\\
&\le C\sigma^{-\frac{1}{2}}+C\|\nabla u\|_{H^2}+C\sigma^{-\frac{1}{2}}(\sigma\|\nabla u_t\|_{H^1}^2)^{\frac{3(q-2)}{4q}}+C.
\end{align}
Integrating this inequality over $[0,T],$ by \eqref{x1b2} and \eqref{x5b6}, we have
\begin{align}\label{x5b8}
\displaystyle  \int_0^T\|\nabla(\rho\dot{u})\|_{L^q}^{p_0}dt\leq C .
\end{align}

On the other hand, \eqref{x4b} gives
\begin{align}\label{x5b9}
\displaystyle (\|\na^2 P\|_{L^q})_t & \le C \|\na u\|_{L^\infty} \|\na^2 P\|_{L^q}   +C  \|\na^2 u\|_{W^{1,q}}   \nonumber \\
& \le C (1+\|\na u\|_{L^\infty} )\|\na^2 P\|_{L^q}+C(1+ \|\na  u_t\|_{L^2})+ C\| \na(\n \dot u )\|_{L^{q}},
\end{align}
where in the last inequality we have used the  following simple fact that
\begin{align}\label{x5b10}
\displaystyle \|\na^2 u\|_{W^{1,q}}
 &\le C(1 + \|\na  u_t\|_{L^2}+ \| \na(\n\dot u )\|_{L^{q}}+\|\na^2  P\|_{L^{q}}),
\end{align}
due to \eqref{2tdu}, \eqref{3tdu}, \eqref{x1b2} and \eqref{x4b}.

Hence, applying Gronwall's inequality in \eqref{x5b9}, we deduce from \eqref{x2b1}, \eqref{x3b}  and \eqref{x5b8} that
\begin{align}\label{x5b11}
\displaystyle  \sup_{t\in[0,T]}\|\nabla^{2}P\|_{L^q}\leq C ,
\end{align}
which along with \eqref{x3b}, \eqref{x4b}, \eqref{x5b10} and \eqref{x5b8} also gives
\begin{align}\label{x5b12}
\displaystyle  \sup_{t\in[0,T]}\|P-{P}_{\infty}\|_{W^{2,q}}+\int_0^T\|\nabla^{2}u\|_{W^{1,q}}^{p_0}dt\leq C .
\end{align}
Similarly, one has
\begin{align}
\displaystyle \sup\limits_{0\le t\le T}\|
\n-{\rho}_{\infty}\|_{W^{2,q}} \le
 C,
\end{align}
which together with \eqref{x5b12}  gives \eqref{x5b}. The proof of Lemma \ref{lem-x5} is finished.
\end{proof}

\begin{lemma}\label{lem-x6}
There exists a positive constant $C$ such that, for any $q\in (3,6)$,
\begin{align}\label{x6b}
\displaystyle & \sup_{0\le t\le T}\sigma\left(\|\rho^{\frac{1}{2}} u_{tt}\|_{L^2}+\|H_{tt}\|_{L^2}+\|\nabla u_t\|_{H^1}+\|\nabla H_t\|_{H^1}+\|\nabla^2 H\|_{H^2}+\|\na u\|_{W^{2,q}}\right)\nonumber \\
& +\int_{0}^T\sigma^2(\|\nabla u_{tt}\|_{2}^2+\|\nabla H_{tt}\|_{2}^2)dt\le C.
\end{align}
\end{lemma}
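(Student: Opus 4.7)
The plan is to obtain these bounds by differentiating the momentum and magnetic equations twice in time, testing against the appropriately weighted second time derivatives, and then converting the resulting time-derivative bounds into spatial-derivative bounds via elliptic regularity (Lemma \ref{lem-high} and the $L^p$-theory of the Lam\'e system). I will follow the template already used for Lemmas \ref{lem-x3}--\ref{lem-x5}, but one time derivative higher.

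\textbf{Step 1 (Core energy estimate).} Differentiate \eqref{utt} once more in $t$ and similarly for \eqref{htt}; take the $L^2$ inner product with $\sigma^2 u_{tt}$ and $\sigma^2 H_{tt}$ respectively, and integrate by parts, using the boundary conditions $u_{tt}\cdot n=0$, $\omega_{tt}\times n=0$, $H_{tt}\cdot n=0$, $\curl H_{tt}\times n=0$ inherited by time differentiation. Writing $\int\rho u_{ttt}\cdot u_{tt}\,dx=\frac12\frac{d}{dt}\|\sqrt\rho u_{tt}\|_{L^2}^2-\frac12\int\rho_t|u_{tt}|^2dx$, one arrives at an inequality of the form
\begin{align*}
& \frac{d}{dt}\Big(\sigma^2\|\sqrt\rho u_{tt}\|_{L^2}^2+\sigma^2\|H_{tt}\|_{L^2}^2\Big)
+c\sigma^2\Big(\|\nabla u_{tt}\|_{L^2}^2+\|\nabla H_{tt}\|_{L^2}^2\Big) \\
& \le C\sigma\,(\text{lower-order terms})+C\sigma^2\,\Phi(t)\,\Big(\|\sqrt\rho u_{tt}\|_{L^2}^2+\|H_{tt}\|_{L^2}^2\Big),
\end{align*}
where the RHS is obtained after estimating the many commutator terms ($\rho_{tt}u_t$, $(\rho u\cdot\nabla u)_{tt}$, $P_{tt}\div u_{tt}$, $((H\otimes H)_{tt}):\nabla u_{tt}$, $(H\cdot\nabla u-u\cdot\nabla H-H\div u)_{tt}\cdot H_{tt}$, etc.) by Sobolev and Gagliardo--Nirenberg, controlling $\|u_{tt}\|_{L^6}$, $\|H_{tt}\|_{L^6}$ by $\|\nabla u_{tt}\|_{L^2}$, $\|\nabla H_{tt}\|_{L^2}$ and absorbing into the LHS with a small constant. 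The function $\Phi(t)$ is integrable on $[0,T]$ thanks to $\int_0^T(\|\nabla u_t\|_{L^2}^2+\|\nabla H_t\|_{L^2}^2+\|\rho_{tt}\|_{L^2}^2+\|P_{tt}\|_{L^2}^2)dt\le C$ from \eqref{x4b}--\eqref{x4bb}. Gronwall's inequality (noting $\sigma^2(0)=0$ makes the boundary term vanish) then yields $\sup_{[0,T]}\sigma^2(\|\sqrt\rho u_{tt}\|_{L^2}^2+\|H_{tt}\|_{L^2}^2)+\int_0^T\sigma^2(\|\nabla u_{tt}\|_{L^2}^2+\|\nabla H_{tt}\|_{L^2}^2)dt\le C$.

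\textbf{Step 2 (Elliptic regularity of $u_t$ and $H_t$).} Regard \eqref{utt} as a stationary Lam\'e system for $u_t$ with source
\begin{equation*}
\rho u_{tt}+\nabla P_t+\rho_t u_t+(\rho u\cdot\nabla u)_t-(H\cdot\nabla H-\nabla|H|^2/2)_t,
\end{equation*}
subject to $u_t\cdot n=0$, $\omega_t\times n=0$. Lemma \ref{lem-high} with $k=1$, $p=2$ gives $\|\nabla^2 u_t\|_{L^2}$ in terms of the $H^1$-norm of this source; each piece is now controlled using the bound on $\sqrt\rho u_{tt}$, $H_{tt}$ from Step~1 together with Lemma~\ref{lem-x5}. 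Multiplying by $\sigma$ yields $\sup_{[0,T]}\sigma\|\nabla u_t\|_{H^1}^2\le C$. An analogous argument on the equation $-\nu\nabla\times\curl H_t=-H_{tt}+(H\cdot\nabla u-u\cdot\nabla H-H\div u)_t$ gives $\sup_{[0,T]}\sigma\|\nabla H_t\|_{H^1}^2\le C$.

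\textbf{Step 3 (Higher-order spatial bounds).} Applying Lemma \ref{lem-high} iteratively to the identity $\nu\nabla\times\curl H=-H_t+H\cdot\nabla u-u\cdot\nabla H-H\div u$ (using $\div H=0$, $H\cdot n=0$, $\curl H\times n=0$), $\|\nabla^{k+1}H\|_{L^2}$ is bounded by $\|\nabla^{k-1}H_t\|_{L^2}$ plus controlled nonlinearities, so the previous step delivers $\sigma\|\nabla^2 H\|_{H^2}^2\le C$. For $u$, combine \eqref{x5b10} with $\|\nabla u_t\|_{L^2}$ controlled by Step~2 and with $\|\nabla(\rho\dot u)\|_{L^q}$ estimated as in \eqref{x5b7}, using now $\|\nabla u_t\|_{H^1}$ at weight $\sigma^{1/2}$, to conclude $\sigma\|\nabla u\|_{W^{2,q}}\le C$.

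\textbf{Main obstacle.} The delicate point is Step~1: the RHS of the twice-differentiated momentum equation contains genuine quadratic-in-$u_{tt}$ terms such as $\int\rho_t u\cdot\nabla u_t\cdot u_{tt}$ after re-writing $(\rho u\cdot\nabla u)_{tt}$, and coupling terms like $\int(H_t\cdot\nabla H_t)\cdot u_{tt}$ arising from $((H\otimes H)_{tt}):\nabla u$. Closing the estimate requires placing the worst factor of $u_{tt}$ or $H_{tt}$ in $L^6$ (absorbed by $\|\nabla u_{tt}\|_{L^2}$, $\|\nabla H_{tt}\|_{L^2}$ through the Gagliardo--Nirenberg inequality \eqref{g1}), and showing that the remaining coefficient is in $L^1_t$ by invoking the already-established bounds $\int_0^T\sigma(\|\nabla u_t\|_{L^2}^2+\|\nabla H_t\|_{L^2}^2)dt\le C$ and $\int_0^T(\|\rho_{tt}\|_{L^2}^2+\|P_{tt}\|_{L^2}^2)dt\le C$. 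Verifying this term-by-term accounting is the main computational burden; everything else is routine elliptic regularity.
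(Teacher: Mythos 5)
Your proposal is correct and follows essentially the same route as the paper: differentiate the momentum and magnetic equations twice in time, test against (weighted) $u_{tt}$ and $H_{tt}$, absorb the quadratic terms via Gagliardo--Nirenberg and the earlier bounds \eqref{x4b}--\eqref{x4bb}, apply Gronwall (the $\sigma^2$ weight killing the initial layer), and then upgrade to the spatial bounds on $\nabla u_t$, $\nabla H_t$, $\nabla^2 H$ and $\nabla u$ in $W^{2,q}$ through the elliptic system \eqref{x5b5}, the identity \eqref{tdhk} and the estimates \eqref{x5b7}, \eqref{x5b10}. The term-by-term accounting you flag as the main burden is exactly what the paper carries out in \eqref{x6r1}--\eqref{x6r6}.
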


\begin{proof} Differentiating $\eqref{CMHD}_{2,3}$ with respect to $t$ twice,
multiplying them by $2u_{tt}$ and $2H_{tt}$ respectively, and integrating over $\Omega$ lead to
\begin{align}\label{x6b2}
&\quad \frac{d}{dt}\int(\rho |u_{tt}|^2+|H_{tt}|^2)dx \nonumber \\
 &\quad +2(\lambda+2\mu)\int(\div u_{tt})^2dx+2\mu\int|\omega_{tt}|^2dx+2\nu\int|\curl H_{tt}|^2dx \nonumber \\
&=-8\int_{ }  \n u^i_{tt} u\cdot\na
 u^i_{tt} dx-2\int_{ }(\n u)_t\cdot \left[\na (u_t\cdot u_{tt})+2\na
u_t\cdot u_{tt}\right]dx \nonumber \\
&\quad -2\int_{}(\n_{tt}u+2\n_tu_t)\cdot\na u\cdot u_{tt}dx-2\int (\n
u_{tt}\cdot\na u\cdot  u_{tt}-P_{tt}{\rm div}u_{tt})dx \nonumber \\
&\quad -2\int (H \cdot \nabla H-\nabla |H|^2/2)_{tt}u_{tt}dx+2 \int(H \cdot \nabla u- u \cdot \nabla H- H \div u)_{tt}H_{tt}dx \nonumber \\
&\triangleq\sum_{i=1}^6 R_i.
\end{align}
Let us estimate $R_i$ for $i=1,\cdots,6$. H\"{o}lder's inequality and \eqref{x2b1} give
\begin{align}\label{x6r1}
\displaystyle  R_1 &\le
C\|\sqrt{\rho}u_{tt}\|_{L^2}\|\na u_{tt}\|_{L^2}\| u \|_{L^\infty}
\le \de \|\na u_{tt}\|_{L^2}^2+C(\de)\|\sqrt{\rho}u_{tt}\|^2_{L^2} .
\end{align}
By \eqref{x1b2}, \eqref{x3b}, \eqref{x4b} and \eqref{x4bb}, we conclude that
\begin{align}
 R_2
&\le \de \|\na u_{tt}\|_{L^2}^2+C(\de)\|\nabla u_t\|_{L^2}^3+C(\de)\|\nabla u_t\|_{L^2}^2,\label{x6r2}\\
 R_3 
&\le \de \|\na u_{tt}\|_{L^2}^2+C(\de)\|\n_{tt}\|_{L^2}^2+C(\de)\|\nabla u_t\|_{L^2}^2,\label{x6r3}\\
 R_4 
&\le \de \|\na u_{tt}\|_{L^2}^2+C(\de)\|\sqrt{\rho}u_{tt}\|^2_{L^2}
+C(\de)\|P_{tt}\|^2_{L^2},\label{x6r4}\\
 R_5 
&\le \de \|\na u_{tt}\|_{L^2}^2+C(\de)\|H_{tt}\|^2_{L^2}+C(\de)\|\nabla H_{t}\|^3_{L^2},\label{x6r5}\\
 R_6 
 &\le \de (\|\na u_{tt}\|_{L^2}^2+\|\na H_{tt}\|_{L^2}^2)+C(\de)\|H_{tt}\|^2_{L^2} \nonumber \\
 & \quad +C(\de)(\|\nabla u_{t}\|_{L^2}\|\nabla H_{t}\|^2_{L^2}+\|\nabla u_{t}\|^2_{L^2}\|\nabla H_{t}\|^2_{L^2}),\label{x6r6}
\end{align}

Substituting \eqref{x6r1}-\eqref{x6r6} into \eqref{x6b2}, utilizing the fact that
\begin{align}\label{x6b3}
\displaystyle  \|\nabla u_{tt}\|_{L^2}\leq C(\|\div u_{tt}\|_{L^2}+\|\omega_{tt}\|_{L^2}), \quad \|\nabla H_{tt}\|_{L^2}\leq C\|\curl H_{tt}\|_{L^2},
\end{align}
and then choosing $\de$ small enough, we can get
\begin{align}\label{x6b4}
&\frac{d}{dt}(\|\sqrt{\rho}u_{tt}\|^2_{L^2}+\|H_{tt}\|^2_{L^2})+\|\na u_{tt}\|_{L^2}^2+\|\na H_{tt}\|_{L^2}^2 \nonumber \\
&\le C (\|\sqrt{\rho}u_{tt}\|^2_{L^2}+\|H_{tt}\|^2_{L^2}+\|\rho_{tt}\|^2_{L^2}+\|P_{tt}\|^2_{L^2}+\|\nabla u_{t}\|^3_{L^2}+\|\nabla H_{t}\|^3_{L^2}) \nonumber \\
& \quad+C(\|\nabla u_{t}\|_{L^2}\|\nabla H_{t}\|^2_{L^2}+\|\nabla u_{t}\|^2_{L^2}\|\nabla H_{t}\|^2_{L^2}),
\end{align}
which together with \eqref{x4b}, \eqref{x4bb}, and by Gronwall's inequality yields that
\begin{align}\label{x6b5}
&\quad\sup_{0\leq t\leq T}\sigma^2(\|\sqrt{\rho}u_{tt}\|^2_{L^2}+\|H_{tt}\|^2_{L^2})+\int_0^T\sigma^2(\|\na u_{tt}\|_{L^2}^2+\|\na H_{tt}\|_{L^2}^2)dt 
\leq C.
\end{align}
Furthermore, it follows from \eqref{tdhk}, \eqref{3tdu}, \eqref{x5b4} and \eqref{x4bb} that
\begin{align}\label{x6b6}
\displaystyle &\quad \sup_{0\le t\le T}(\sigma\|\nabla^2 u_t\|_{L^2}+\sigma\|\nabla^2 H_t\|_{L^2}) \nonumber \\
 &\leq C \sigma(1+\|\rho^{\frac{1}{2}}u_{tt}\|_{L^2}+\|H_{tt}\|_{L^2}+\|\nabla u_t\|_{L^2}+\|\nabla H_t\|_{L^2}) \leq C.
\end{align}
Finally, we deduce from \eqref{x4bb}, \eqref{x5b}, \eqref{x5bb}, \eqref{x5h1}, \eqref{x5b7}, \eqref{x5b10}, \eqref{x6b5} and \eqref{x6b6} that
\begin{align}\label{x6b7}
&\displaystyle \quad \sigma\|\na^2 u\|_{W^{1,q}}
 \le C\sigma (1+\|\na  u_t\|_{L^2}+\|\na  H_t\|_{L^2}+\| \na(\n
\dot u )\|_{L^{q}}+\|\na^2  P\|_{L^{q}})\nonumber \\
& \le C(1+ \sigma\|\na u\|_{H^2}+\sigma^{\frac{1}{2}}(\sigma\|\na u_t\|_{H^1}^2)^{\frac{3(q-2)}{4q}})
\le C+C\sigma^{\frac{1}{2}}(\sigma^{-1})^{\frac{3(q-2)}{4q}} \le C ,
\end{align}
and
\begin{align}
\displaystyle  \sigma \|\nabla^2 H\|_{H^2}\leq C \sigma(1+\|\nabla H_t\|_{H^1}+\|\nabla u\|_{H^2}\|\nabla H\|_{H^2})\leq C,
\end{align}
together with \eqref{x6b5} and \eqref{x6b6} yields \eqref{x6b} and finishes the proof.
\end{proof}

\section{Proof of  Theorem  \ref{th1}-\ref{th2}}\label{se5}
In this section, we are prepared to proof the main results of this paper. Based on the estimates in Section \ref{se3}, we follows the procedure in \cite{lx2016,cl2021} to give the sketch of the proof.

{\it\textbf{Proof of Theorem \ref{th1}.} }
 By Lemma \ref{lem-local}, there exists a $T_*>0$ such that the  system \eqref{CMHD}-\eqref{boundary} has a unique classical solution $(\rho,u,H)$ on $\Omega\times
(0,T_*]$. One may use the a priori estimates, Proposition \ref{pr1} and Lemmas \ref{lem-x3}-\ref{lem-x6} to extend the classical solution $(\rho,u,H)$ globally in time.

First, by the definition of \eqref{As1}-\eqref{As5}, the assumption of the initial data \eqref{dt2} and \eqref{ba35}, one immediately checks that
\begin{align}\label{pf1}
\displaystyle  0\leq\rho_0\leq \bar{\rho},\,\, A_1(0)+A_2(0)=0, \,\,  A_3(0)\leq C_0^{\frac{1}{9}},\,\,A_4(0)+A_5(0)\leq C_0^{\frac{1}{9}}.
\end{align}
Therefore, there exists a $T_1\in(0,T_*]$ such that
\begin{equation}\label{pf2}
\begin{cases}
0\leq\rho_0\leq2\bar{\rho}, \,\, A_1(T)+A_2(T)\leq 2C_0^{\frac{1}{2}}, \\
A_3(T)\leq 2C_0^{\frac{1}{9}}, \,\, A_4(\sigma(T))+A_5(\sigma(T))\leq 2C_0^{\frac{1}{9}},
\end{cases}	
\end{equation}
hold for $T=T_1.$
Next, we set
\begin{align}\label{pf3}
\displaystyle  T^*=\sup\{T\,|\,{\rm \eqref{pf2} \ holds}\}.
\end{align}
Then $T^*\geq T_1>0$. Hence, for any $0<\tau<T\leq T^*$
with $T$ finite, it follows from Lemmas \ref{lem-x1}-\ref{lem-x6}
that
\begin{equation}\label{pf4}
\begin{cases}
\rho-{\rho}_{\infty} \in C([0,T]; H^2 \cap W^{2,q}), \\
(\nabla u,\nabla H) \in C([\tau ,T]; H^1),\quad ( \nabla u_t, \nabla H_t) \in C([\tau ,T]; L^q);
\end{cases}	
\end{equation}
where one has taken advantage of the standard embedding
$$L^\infty(\tau ,T;H^1)\cap H^1(\tau ,T;H^{-1})\hookrightarrow
C\left([\tau ,T];L^q\right),\quad\mbox{ for any } q\in [2,6).  $$
Due to \eqref{x3b}, \eqref{x4bb}, \eqref{x6b} and $\eqref{CMHD}_1$,
we obtain
\begin{align*}
&\quad\int_{\tau}^T \left|\left(\int\n|u_t|^2dx\right)_t\right|dt
\le\int_{\tau}^T\left(\|  \n_t  |u_t|^2 \|_{L^1}+2\|  \n  u_t\cdot u_{tt} \|_{L^1}\right)dt\\
&\le C\int_{\tau}^T\left( \| \n^{\frac{1}{2}} |u_t|^2 \|_{L^2}\|\na u\|_{L^\infty}+\|  u\|_{L^6}\|\na\n\|_{L^2} \|u_t  \|^2_{L^6}+  \|\sqrt{\rho}u_{tt} \|_{L^2}\right)dt\le C,
\end{align*}
which together with \eqref{pf4} yields
\begin{align}\label{pf5}
\displaystyle  \rho^{\frac{1}{2}}u_t, \quad\rho^{\frac{1}{2}}\dot u \in C([\tau,T];L^2).
\end{align}
Finally, we claim that
\begin{align}\label{pf6}
 \displaystyle  T^*=\infty.
 \end{align}
Otherwise, $T^*<\infty$. Then by Proposition \ref{pr1}, it holds that
\begin{equation}\label{pf7}
\begin{cases}
0\leq\rho\leq\frac{7\bar{\rho}}{4},\,\,A_1(T^*)+A_2(T^*)\leq C_0^{\frac{1}{2}},\\
A_3(T^*)\leq C_0^{\frac{1}{9}}, \,\, A_4(\sigma(T^*))+A_5(\sigma(T^*))\leq C_0^{\frac{1}{9}},	
\end{cases}	
\end{equation}
It follows from Lemmas \ref{lem-x5}, \ref{lem-x6} and \eqref{pf5} that $(\rho(x,T^*),u(x,T^*), H(x,T^*))$ satisfies the initial data condition \eqref{dt1}-\eqref{dt2}, \eqref{dt3}, where  $g(x)\triangleq\sqrt{\rho}\dot u(x, T^*),\,\,x\in \Omega.$
Thus, Lemma \ref{lem-local} implies that there exists some $T^{**}>T^*$ such that \eqref{pf2} holds for $T=T^{**}$, which contradicts the definition of $ T^*.$ As a result, \eqref{pf6} holds.
By Lemmas \ref{lem-local} and \ref{lem-x1}-\ref{lem-x6}, it indicates that $(\rho,u,H)$ is in fact the unique classical solution defined on $\Omega\times(0,T]$ for any  $0<T<T^*=\infty.$

Finally, with \eqref{m2}, \eqref{tdu1}, \eqref{tdh-2}, \eqref{grho}, \eqref{tdh-3} and \eqref{I01} at hand, \eqref{esti-t}  can be obtained in similar arguments as used in \cite{cl2021}, and we omit the details.
The proof of Theorem \ref{th1} is finished.   \endproof

{\it\textbf{Proof of Theorem \ref{th2}.} }
As is shown by \cite{cl2021}, we sketch the proof for completeness. First, we show that, for $T>0$, the Lagrangian coordinates of the system are given by
  \be \la{c61}  \begin{cases}\frac{\partial}{\partial \tau}X(\tau; t,x) =u(X(\tau; t,x),\tau),\,\,\,\, 0\leq \tau\leq T\\
 X(t;t,x)=x, \,\,\,\, 0\leq t\leq T,\,x\in\bar{\Omega}.\end{cases}\ee
 By \eqref{esti-uh}, the transformation \eqref{c61} is well-defined. In addition, by $\eqref{CMHD}_1$, we have
 \be\la{c62}\ba
\rho(x,t)=\rho_0(X(0; t, x)) \exp \{-\int_0^t\div u(X(\tau;t, x),\tau)d\tau\}.
\ea \ee
 If there exists some point $x_0\in \Omega$ such that $\n_0(x_0)=0,$ then there is a point $x_0(t)\in \bar{\Omega}$ such that $X(0; t, x_0(t))=x_0$. Hence, by \eqref{c62}, $\rho(x_0(t),t)\equiv 0$ for any $t\geq 0.$

 Now we will prove Theorem \ref{th2} by contradiction. Suppose there exist some positive constant $C_1$ and a subsequence ${t_{n_j}},$ $t_{n_j}\rightarrow \infty$ as $j\rightarrow \infty$ such that $\|\na\n (\cdot,t_{n_j})\|_{L^{r_1}}<C_1$. Consequently, by \eqref{g2} and the assumption $\rho_\infty>0$, we get that for $ r_1\in  (3,\infty)$ and $\theta=\frac{r_1-3}{2r_1-3}$,
\be\la{c63}\ba\rho_\infty\leq\|\rho(\cdot,t_{n_j})-\rho_\infty\|_{C\left(\ol{\O }\right)} \le C
\|\rho(\cdot,t_{n_j})-\rho_\infty\|_{L^3}^{\theta}\|\na \rho(\cdot,t_{n_j})\|_{L^{r_1}}^{1-\theta},
\ea\ee
which is in contradiction with \eqref{esti-t}. The proof is completed. \endproof






\section*{Acknowledgements} This research was partially supported by National Natural Sciences Foundation of China (Nos. 12171024, 11901025, 11971217, 11971020).


\begin{thebibliography}{10}

\bibitem{BKM1984}
J.~T. Beale, T.~Kato, and A.~Majda.
\newblock Remarks on the breakdown of smooth solutions for the {3-D Euler}
  equations.
\newblock {\em Commun. Math. Phys.}, 94:61--66, 1984.

\bibitem{bl1976}
J.~Bergh and J.~L\"{o}fstr\"{o}m.
\newblock {\em Interpolation Spaces. {An} Introduction}.
\newblock Springer-Verlag, Berlin-Heidelberg-New York, 1976.

\bibitem{cl2021}
G.~Cai, J.~Li. and B. L\"{u}.
\newblock Global Classical Solutions to the Compressible Navier-Stokes Equations with Slip Boundary Conditions in 3D Exterior Domains.
\newblock arXiv:2112.05586.

\bibitem{cl2019}
G.~Cai and J.~Li.
\newblock Existence and exponential growth of global classical solutions to the
  compressible {Navier-Stokes} equations with slip boundary conditions in {3D}
  bounded domains.
\newblock arXiv:2102.06348.

\bibitem{cw2002}
G.~Q. Chen and D.~Wang.
\newblock Global solutions of nonlinear {Magnetohydrodynamics} with large
  initial data.
\newblock {\em J. Differential Equations}, 182:344--376, 2002.

\bibitem{cw2003}
G.~Q. Chen and D.~Wang.
\newblock Existence and continuous dependence of large solutions for the
  magnetohydrodynamic equations.
\newblock {\em Z. Angew. Math. Phys.}, 54(4):608--632, 2003.

\bibitem{chs2020}
Y.~Chen, B.~Huang, and X.~Shi.
\newblock Global strong solutions to the compressible {Magnetohydrodynamic}
  equations with slip boundary conditions in {3D} bounded domains.
\newblock arXiv:2102.07341.

\bibitem{cm2004}
F.~Crispo and P.~Maremonti.
\newblock An interpolation inequality in exterior domains.
\newblock {\em Rend. Sem. Mat. Univ. Padova}, 112:11--39, 2004.

\bibitem{djj2013}
C.~Dou, S.~Jiang, and Q.~Ju.
\newblock Global existence and the low mach number limit for the compressible
  magnetohydrodynamic equations in a bounded domain with perfectly conducting
  boundary.
\newblock {\em Z. Angew. Math. Phys.}, 64(6):1661--1678, 2013.

\bibitem{df2006}
B.~Ducomet and E.~Feireisl.
\newblock The equations of magnetohydrodynamics: On the interaction between
  matter and radiation in the evolution of gaseous stars.
\newblock {\em Comm. Math. Phys.}, 266:595--629, 2006.

\bibitem{fy2008}
J.~Fan and W.~Yu.
\newblock Global variational solutions to the compressible magnetohydrodynamic
  equations.
\newblock {\em Nonlinear Anal. Theory Methods Appl.}, 69(10):3637--3660, 2008.

\bibitem{fy2009}
J.~Fan and W.~Yu.
\newblock Strong solution to the compressible magnetohydrodynamic equations
  with vacuum.
\newblock {\em Nonlinear Anal. Real World Appl.}, pages 392--409, 2009.

\bibitem{gll2006}
J.-F. Gerbeau, C.~Le~Bris, and T.~Leli\`{e}vre.
\newblock {\em Mathematical Methods for the Magnetohydrodynamics of Liquid
  Metals}.
\newblock Oxford University Press, 2006.

\bibitem{hhpz2017}
G.~Hong, X.~Hou, H.~Peng, and C.~Zhu.
\newblock Global existence for a class of large solutions to three-dimensional
  compressible magnetohydrodynamic equations with vacuum.
\newblock {\em Siam J Math. Anal.}, 49(4):2409--2441, 2017.

\bibitem{hw2008}
X.~Hu and D.~Wang.
\newblock Global solutions to the three-dimensional full compressible
  magnetohydrodynamic flows.
\newblock {\em Commun. Math. Phys.}, 283:255--284, 2008.

\bibitem{hw2008-1}
X.~Hu and D.~Wang.
\newblock Compactness of weak solutions to the three-dimensional compressible
  magnetohydrodynamic equations.
\newblock {\em J. Differential Equations}, 245:2176--2198, 2009.

\bibitem{hw2010}
X.~Hu and D.~Wang.
\newblock Global existence and large-time behavior of solutions to the
  three-dimensional equations of compressible magnetohydrodynamic flows.
\newblock {\em Arch. Ration. Mech. Anal.}, 197:203--238, 2010.

\bibitem{k1984}
S.~Kawashima.
\newblock Smooth global solutions for two-dimensional equations of
  electro-magneto-fluid dynamics.
\newblock {\em Japan J. Appl. Math.}, 1:207--222, 1984.

\bibitem{lxz2013}
H.~Li, X.~Xu, and J.~Zhang.
\newblock Global classical solutions to {3D} compressible magnetohydrodynamic
  equations with large oscillations and vacuum.
\newblock {\em SIAM J. Math. Anal.}, 45:1356--1387, 2013.

\bibitem{lx2016}
J.~Li and Z.~Xin.
\newblock {\em Global Existence of Regular Solutions with Large Oscillations
  and Vacuum}.
\newblock In: Giga Y., Novotny A. (eds) Handbook of Mathematical Analysis in
  Mechanics of Viscous Fluids. Springer, 2016.

\bibitem{llz2021}
H.~Liu, T.~Luo and H.~Zhong.
\newblock Global Solutions to an initial boundary problem for the compressible 3-D MHD equations with Navier-slip and perfectly conducting boundary conditions in exterior domains.
\newblock arXiv:2106.04329.

\bibitem{liu2015}
Y.~Liu.
\newblock Global classical solutions of {3D} isentropic compressible {MHD} with
  general initial data.
\newblock {\em Z. Angew. Math. Phys.}, 66(4):1777--1797, 2015.

\bibitem{lz2020-mhd}
Y.~{Liu} and X.~{Zhong}.
\newblock Global well-posedness to three-dimensional full compressible
  magnetohydrodynamic equations with vacuum.
\newblock {\em Z. Angew. Math. Phys.}, 71(6):1--25, 2020.

\bibitem{lhm2016}
H.~Louati, M.~Meslameni, and U.~Razafison.
\newblock Weighted {$L^p$} theory for vector potential operators in
  three-dimensional exterior domains.
\newblock {\em Math. Methods Appl. Sci.}, 39(8):1990--2010, 2016.

\bibitem{lh2015}
B.~Lv and B.~Huang.
\newblock On strong solutions to the cauchy problem of the two-dimensional
  compressible magnetohydrodynamic equations with vacuum.
\newblock {\em Nonlinearity}, 28(2), 2015.

\bibitem{lsx2016}
B.~Lv, X.~Shi, and X.~Xu.
\newblock Global existence and large-time asymptotic behavior of strong
  solutions to the compressible magnetohydrodynamic equations with vacuum.
\newblock {\em Indiana Univ. Math. J.}, 65(3):925--975, 2016.

\bibitem{NS2004}
A.~Novotny and I.~Stra\v{s}kraba.
\newblock {\em Introduction to the mathematical theory of compressible flow}.
\newblock Oxford Lecture Ser. Math. Appl., Oxford Univ. Press, Oxford, 2004.

\bibitem{sh2012}
A.~Suen and D.~Hoff.
\newblock Global low-energy weak solutions of the equations of
  three-dimensional compressible magnetohydrodynamics.
\newblock {\em Arch. Rational Mech. Anal.}, 205:27--58, 2012.

\bibitem{tg2016}
T.~Tang and H.~Gao.
\newblock Strong solutions to {3D} compressible magnetohydrodynamic equations
  with {Navier-slip} condition.
\newblock {\em Math. Methods Appl. Sci.}, 39(10):2768--2782, 2016.

\bibitem{vh1972}
A.~Vol'Pert and S.~Hudjaev.
\newblock On the {Cauchy} problem for composite systems of nonlinear equations.
\newblock {\em Mathematics of the USSR-Sbornik}, 16(4):517--544, 1972.

\bibitem{Von1992}
W.~von Wahl.
\newblock Estimating $\nabla u$ by ${\rm div}\, u$ and ${\rm curl}\, u$.
\newblock {\em Math. Methods Appl. Sci.}, 15(2):123--143, 1992.

\bibitem{wang2003}
D.~Wang.
\newblock Large solutions to the initial-boundary value problem for planar
  magnetohydrodynamics.
\newblock {\em SIAM J. Math. Anal.}, 63(4):1424--1441, 2003.

\bibitem{xh2017}
S.~Xi and X.~Hao.
\newblock Existence for the compressible magnetohydrodynamic equations with
  vacuum.
\newblock {\em J. Math. Anal. Appl.}, 453:410--433, 2017.

\bibitem{zjx2009}
J.~Zhang, S.~Jiang, and F.~Xie.
\newblock Global weak solutions of an initial boundary value problem for screw
  pinches in plasma physics.
\newblock {\em Math. Models Methods Appl. Sci.}, 19(06):833--875, 2009.

\bibitem{zlo2000}
A.~Zlotnik.
\newblock Uniform estimates and stabilization of symmetric solutions of a
  system of quasilinear equations.
\newblock {\em Differential Equations}, 36:701--716, 2000.

\end{thebibliography}
\end{document}